\numberwithin{equation}{section}
\DeclareMathOperator\Ad{Ad}
\newcommand{\abs}[1]{\left\lvert #1\right\rvert}
\newcommand{\norm}[1]{\left\lVert #1\right\rVert}
\newcommand{\ip}[2]{\left\langle #1,#2\right\rangle}
\newcommand{\vol}{\mathrm{vol}}
\newcommand{\dist}{\operatorname{dist}}
\newcommand{\eps}{\varepsilon}
\newcommand{\wto}{\rightharpoonup}
\newcommand{\SU}{\mathrm{SU}}
\newcommand{\SO}{\mathrm{SO}}
\newtheorem{theorem}{Theorem}
\newtheorem{corollary}{Corollary}
\newtheorem{lemma}{Lemma}
\newtheorem{proposition}{Proposition}
\theoremstyle{definition} \newtheorem{definition}{Definition}
\newtheorem{remark}{Remark}
\newtheorem*{question}{Question}
\newtheorem*{acknowledgements}{Acknowledgements}
\numberwithin{theorem}{section}
\numberwithin{proposition}{section}
\numberwithin{corollary}{section}
\numberwithin{lemma}{section}
\numberwithin{remark}{section}
\author{Daniel Fadel}
\address{Instituto de Ci\^encias Matem\'aticas e de Computa\c{c}\~ao,
Universidade de S\~ao Paulo, S\~ao Carlos--SP, Brazil}
\email{daniel.fadel@icmc.usp.br}
\urladdr{https://sites.google.com/view/daniel-fadel-math-homepage/home}
\author{Gon\c{c}alo Oliveira}
\address{Instituto Superior T\'ecnico, Universidade de Lisboa, Lisbon,
Portugal}
\email{galato97@gmail.com}
\urladdr{https://sites.google.com/view/goncalo-oliveira-math-webpage/home}
\title[The limit of large mass monopoles]{The limit of large mass monopoles} 
\date{}
\begin{document}

%===============================================================================
\begin{abstract}
In this paper, we consider finite energy $\SU(2)$ monopoles on an asymptotically conical, oriented Riemannian $3$-manifold with one end. The connected components of the moduli space of monopoles in this setting are labeled by an integer called the charge. We analyze sequences of monopoles with fixed charge and unbounded mass, or equivalently unbounded Yang--Mills--Higgs energy. We prove that their limiting behavior is characterized by energy concentration along a blow-up set, which is finite, and obtain effective bounds on its cardinality depending only on the charge. We also consider the zero set formed by the accumulation points of zeros of the Higgs fields and prove that the zero set and the blow-up set coincide.

At each concentration point, a moving centre analysis produces a finite cluster of mass one Euclidean monopoles whose total energy equals the concentration weight in the limiting energy measure, with no loss of mass-renormalized energy in the intervening regions. For general Yang--Mills--Higgs critical points whose energies are $O(m_i)$ as $i\to\infty$, where $m_i$ are the masses of the configurations, the blow-up set is countable and contains the limiting Higgs-zero set for every compact structure group. If the structure group is $\SU(2)$ or $\SO(3)$, a moving centre and scale selection argument also produces a nontrivial mass one Euclidean Yang--Mills--Higgs critical point at every blow-up point. We explain why this last conclusion can fail in higher rank.
\end{abstract}

\maketitle

\noindent\textbf{Changes to the published version and guide to this revision.}
The published version of this paper corresponds to arXiv version~4. The
present version~5 is a self-contained corrected and expanded version of that
article. It incorporates all corrections to the statements and proofs, as
well as the additional results on the complete bubble-cluster structure,
while retaining the organization, numbering, and unaffected parts of the
published article as far as possible.

The principal conclusions of the published paper remain valid. Theorem
\ref{thm: Main_Monopoles}\textup{(a)--(c)} is unchanged at the level of
its original conclusions, but the fixed centre argument is replaced by a
moving centre analysis. The substantive correction to the statement of
Theorem~\ref{thm: Main_Monopoles} is confined to part~\textup{(d)}: the displayed energy density must contain
a factor of $1/2$, and the coefficient of the limiting measure at a point
$x$ is $4\pi K_x$, where $K_x$ is the sum of the charges of all mass one
Euclidean monopoles in the finite cluster over $x$, rather than the charge
of one selected bubble. Theorem~\ref{thm: Main_YMH}\textup{(a')} remains
valid for the rank one groups $G=\SU(2)$ and $G=\SO(3)$ after a
moving centre and scale selection argument. For arbitrary compact $G$, the conclusions $Z\subset S$ and the countability of $S$
remain valid. Part~\textup{(b')} is restated using the uniform-threshold
definition of $S$ in \eqref{eq:main-concentration-zero-sets}, correcting
the stronger set-theoretic formula displayed in the published statement.
A higher rank example in Remark~\ref{rem:higher-rank-obstruction} explains
why the mass one conclusion in part~\textup{(a')} requires the rank one
restriction.

As an additional consequence of the corrected rank one bubbling argument,
Remark~\ref{rem:charged-bubbles-higgs-zeros} shows that whenever the
Euclidean bubble has nonzero magnetic charge, the moving centres may be
chosen among zeros of the original Higgs fields. In particular, the set
of concentration points admitting a charged bubble is contained in $Z$.

The necessity of the correction to
Theorem~\ref{thm: Main_Monopoles}\textup{(d)} is illustrated in
Section~\ref{sec:hierarchical-example}. There we construct sequences for
which several mass one Euclidean monopoles converge to the same point
$x\in S$ on the original manifold, while separating after rescaling by
the mass, possibly through several intermediate separation scales.
Consequently, the coefficient of the limiting measure at $x$ cannot in
general be recovered from any one pointed Euclidean limit: it is the
total energy, equivalently $4\pi$ times the total charge $K_x$, of the
complete cluster lying over $x$.

The supporting corrections retain the natural coercive terms in the
Bochner inequality, give a corrected $\varepsilon$-regularity proof
uniform in the mass, and replace the fixed centre steps in the bubbling
arguments by moving centres. Several aspects of this revised analysis,
in particular the rescaled $\varepsilon$-functional viewpoint, the
mass-uniform local estimates, and the moving centre bubble extraction,
are closely related to the methods developed in
\cite{cheng2025su2}. The arguments are included here in the form needed
for the present zero-potential problem. The additional results prove a
complete finite cluster decomposition for monopoles, identify every
concentration weight with the total energy and charge of its cluster,
establish the no loss identity, and give an explicit example with
several intermediate separation scales.

In addition, the monopole statements are now formulated under the
natural assumption of finite Yang--Mills--Higgs energy. More generally,
\cite{fadel2023asymptotics} proves that every finite energy configuration
on an AC $3$-manifold has a well-defined mass, and that the Higgs norm
of a finite energy YMH configuration converges uniformly to this mass
along the end. For $\SU(2)$ YMH critical points,
that work also proves a sharp asymptotic expansion of the Higgs field
norm; see Theorems~4.11 and~4.13 and Corollary~4.14 therein. Moreover,
a critical Sobolev estimate obtained from the same paper gives an
alternative proof of the small Higgs field radius bound for finite
energy configurations with arbitrary compact structure group. Thus the
stronger asymptotic condition on the connection used in the published
version is unnecessary throughout the results of the present paper.

For ease of comparison, material newly added or substantially revised in
version~5 is indicated at the beginnings of the relevant sections and in
remarks explicitly labeled as updates to version~5.

Accordingly, the present version may be read independently of the published version.

\medskip
%===============================================================================\medskip
%===============================================================================

%===============================================================================
\tableofcontents

\section{Introduction}\label{sec: intro}
Let $(X^3,g)$ be an oriented Riemannian $3$-manifold, and let $E$ be a
$G$-bundle over $X$, where $G$ is a compact Lie group with Lie algebra
$\mathfrak g$. Fix a positive-definite $\Ad$-invariant inner product on
$\mathfrak g$, and equip the associated adjoint bundle $\mathfrak g_E$
with the induced metric. All norms and inner products involving
$\mathfrak g_E$-valued forms are understood with respect to this metric
and the Riemannian metric $g$. When $G=\SU(2)$, we use throughout the
normalization
\[
    \langle a,b\rangle=-2\operatorname{tr}(ab),
    \qquad a,b\in\mathfrak{su}(2).
\]
We denote by $\mathcal{A}(E)$ the space of smooth connections on $E$
and refer to sections of the adjoint bundle $\mathfrak{g}_E$ as
\emph{Higgs fields}. A pair
$(A,\Phi)\in\mathcal{A}(E)\times\Gamma(\mathfrak{g}_E)$ is called a
\emph{configuration} on $E$. Such a configuration is called a
\emph{monopole} if it satisfies the Bogomolny equation
\begin{equation}\label{eq: monopole}
    \ast F_A=\text{d}_A\Phi.
\end{equation}
Its \emph{Yang--Mills--Higgs energy} is
\[
    \mathscr E_X(A,\Phi)
    :=
    \frac12
    \int_X
    \left(
        \lvert F_A\rvert^2
        +
        \lvert\text{d}_A\Phi\rvert^2
    \right)
    \vol_g.
\]
A configuration, and in particular a monopole, is said to have
\emph{finite energy} when $\mathscr E_X(A,\Phi)<\infty$.

The Bogomolny equation \eqref{eq: monopole} arises from dimensional
reduction of the instanton equations in four dimensions, and monopoles
form a special class of critical points of $\mathscr E_X$. Indeed, the
Euler--Lagrange equations of $\mathscr E_X$ are
\begin{equation}\label{eq: ymh}
    \text{d}_A^{\ast}F_A=[\text{d}_A\Phi,\Phi],
    \qquad
    \Delta_A\Phi=0,
\end{equation}
and, using the Bianchi identity $\text{d}_AF_A=0$, monopoles are easily
seen to satisfy them. We shall refer to solutions of \eqref{eq: ymh} as
\emph{Yang--Mills--Higgs (YMH) configurations}. A finite energy YMH
configuration will also be called a \emph{YMH critical point}.
Accordingly, whenever the finite energy condition is understood, we use
these terms interchangeably.

Any YMH configuration $(A,\Phi)$ satisfies $\Delta_A\Phi=0$ and hence
\begin{equation}\label{eq: subharmonic}
\Delta\frac{\lvert\Phi\rvert^2}{2} = \langle\Phi,\Delta_A\Phi\rangle - \lvert\text{d}_A\Phi\rvert^2 = - \lvert\text{d}_A\Phi\rvert^2\leqslant 0.
\end{equation}
As a consequence, the function $\lvert\Phi\rvert^2$ is subharmonic and therefore has no local maxima. In particular, if $X$ were compact and without boundary, then $\lvert\Phi\rvert^2$ would be constant and $\text{d}_A\Phi=0=\text{d}_A^*F_A$, in which case $A$ would be a Yang--Mills connection. Thus, if one is to study irreducible YMH configurations, meaning those with $\text{d}_A\Phi\neq0$, the manifold $X$ must be noncompact.\footnote{Other options would be to work on manifolds with nonempty boundary and/or to consider singular YMH configurations.}
YMH configurations, more specifically finite energy monopoles, have been the focus of intense study in conformally flat manifolds such as $\mathbb{R}^3$ (some of the earlier references in the mathematics literature are \cites{Taubes1982,Jaffe1980,Atiyah88}) and $\mathbb{R}^2\times S^1$ (see, for example, \cites{Cherkis2001,Cherkis2002,Foscolo2016}), as in these cases the corresponding moduli spaces are (noncompact) Hyperk\"ahler manifolds. In more general geometries, Braam \cite{Braam1989} considered monopoles on asymptotically hyperbolic manifolds, while Floer \cite{Floer1995} and Ernst \cite{Ernst1995} studied monopoles on asymptotically Euclidean (AE) ones, which are natural generalizations of the $\mathbb{R}^3$ situation. A further generalization of the $\mathbb{R}^3$ situation, containing the AE case as a subcase, is provided by asymptotically conical (AC) manifolds; see \cites{Kottke,oliveira2016monopoles}. These are complete Riemannian manifolds that, outside a compact set, are asymptotic to a metric cone over a closed $2$-dimensional Riemannian manifold, say $N^2$. We consider the case in which $N$ is connected; equivalently, the case in which $X$ has only one end.

Subsequent work by the first author
\cite{fadel2023asymptotics} showed that, on a one-ended AC
$3$-manifold, the relevant asymptotic quantities arise intrinsically
from finite Yang--Mills--Higgs energy.\footnote{This finite energy
formulation postdates the published version of the paper, where mass was
introduced through the stronger explicit asymptotic condition in
Definition~\ref{def: finite_mass}; see also
Remarks~\ref{rem:original-finite-mass-hypothesis} and
\ref{rem:finite-energy-monopole-asymptotics}.}
Accordingly, all global monopole results in this paper are formulated for finite energy monopoles.

Every finite energy configuration $(A,\Phi)$ has a unique canonically
associated number $m\geqslant0$, characterized by
\[
    m-\abs{\Phi}\in L^6(X).
\]
If $(A,\Phi)$ is a Yang--Mills--Higgs critical point, then the norm of the
Higgs field converges uniformly to $m$ along the end:
\begin{equation}\label{eq: mass}
    \lim_{\rho\to\infty}\abs{\Phi}=m.
\end{equation}
Moreover, $\abs{\Phi}\leqslant m$ on $X$; in particular, if $m=0$, then
$\Phi\equiv0$. We call $m$ the \emph{mass} of the configuration.

Suppose now that $(A,\Phi)$ is such a critical point, that $m>0$, and
that $G=\SU(2)$. The \emph{charge}, or \emph{monopole number}, of
$(A,\Phi)$ is defined by
\begin{equation}\label{eq: charge_integer}
    k=k(A,\Phi)
    :=
    \frac{1}{4\pi m}
    \int_X\langle F_A\wedge\text{d}_A\Phi\rangle.
\end{equation}
It is an integer and may equivalently be computed at infinity. Indeed,
there exists $R_0\gg_{A,\Phi}1$ such that $\Phi$ is nowhere vanishing on
$\{\rho\geqslant R_0\}$ and, writing
$N_R:=\{\rho=R\}\cong N$, one has
\begin{equation}\label{eq: charge_calculation}
    k
    =
    \lim_{R\to\infty}
    \frac{1}{4\pi}
    \int_{N_R}
    \abs{\Phi}^{-1}\langle\Phi,F_A\rangle.
\end{equation}

For every sufficiently large $R$, the normalized Higgs field determines a map
\[
    \frac{\Phi}{\abs{\Phi}}
    \colon
    N_R\cong N
    \longrightarrow
    \mathbb S^2\subset\mathfrak{su}(2),
\]
whose homotopy class is independent of the sufficiently large level
$R$, and whose Brouwer degree is equal to $k$. Alternatively, the
eigenspaces of $\Phi$ determine a splitting
\[
    E|_{N_R}\cong L\oplus L^{-1},
\]
where $L\to{N_R}$ is the eigenline bundle corresponding to the
preceding degree convention. Its degree is independent of $R$ and
satisfies
\[
    \deg(L)=k.
\]

If $(A,\Phi)$ is in addition an irreducible monopole, equivalently
$\text{d}_A\Phi\not\equiv0$, then $m>0$ and its charge is necessarily
positive,
\[
    k\in\mathbb Z_{>0},
\]
the Higgs field must vanish somewhere,
\[
    \Phi^{-1}(0)\neq\varnothing,
\]
and
\begin{equation}\label{eq: monopole_energy}
    \mathscr E_X(A,\Phi)=4\pi mk.
\end{equation}

More generally, for any finite energy $\SU(2)$ configuration of positive
mass $m$ and positive charge $k$, the Bogomolny completion gives
\begin{equation}\label{eq: Energy_Formula}
    \mathscr E_X(A,\Phi)
    =
    4\pi mk
    +
    \frac12
    \norm{\ast F_A-\text{d}_A\Phi}_{L^2(X)}^2.
\end{equation}
Thus, among finite energy configurations with fixed positive mass and positive charge, the finite energy monopoles are precisely the absolute minimizers of the Yang--Mills--Higgs energy.

The assertions above follow from
\cite{fadel2023asymptotics}*{Theorems~1.1 and~1.4 and
Corollary~1.9}; see also
\cite{fadel2023asymptotics}*{Remarks~1.8 and~1.10}. The virtual dimension of the moduli space of finite energy monopoles on an AC manifold was computed in \cite{Kottke}, and a smooth open set was constructed by a gluing theorem in \cite{oliveira2016monopoles}. This is an AC version of Taubes' original gluing construction for well-separated multimonopoles on $\mathbb{R}^3$; see \cite{Jaffe1980}. In the case of \cite{oliveira2016monopoles}, the mass plays the role of a parameter controlling the concentration of the resulting multi-monopole around its centers. Indeed, allowing the mass to vary gives the freedom of bringing these centers as close as one wants. In order to motivate the main results of this paper we shall now summarize this construction of large mass, charge $k$ monopoles on $X$. This goes as follows: Start with $k$ points in $X$; Insert charge one and mass one monopoles in $\mathbb{R}^3$ scaled down to fit in small disjoint balls around these points; As a byproduct of having been scaled down the monopoles must have mass larger than $O(d^{-2})$, where $d$ is the minimum separation between the $k$-points; Then, by making use of a partition of unity these can be glued with a certain mass $O(d^{-2})$ monopole in the complement of these balls; The resulting configuration does not solve the monopole equations, but by a version of the contraction mapping principle it can be deformed to a nearby one which does. Moreover, we further remark that this configuration produces monopoles with any mass $m \geq O(d^{-2})$, for more details and the precise statements see Theorem 1 in \cite{oliveira2016monopoles} or Theorem \ref{thm: Monopoles_Examples} later in this paper. \footnote{We further point out that it should be possible to start this construction by using higher charge monopoles in $\mathbb{R}^3$ (monopole clusters). A metric version of this gluing has been carried out in \cite{KottkeSinger} for the case of $\mathbb{R}^3$.}

The goal of this paper is to take the inverse point of view and consider a sequence of finite energy monopoles $\lbrace (A_i , \Phi_i) \rbrace_{i \in \mathbb{N} }$ with unbounded masses, $\limsup m_i = \infty$, but fixed charge $k$, over an AC manifold $(X^3,g)$. In this case, the natural expectation would be an inverse construction to that of \cite{oliveira2016monopoles}, with the monopoles either ``escaping'' through the end, or getting concentrated around at most $k$ points $x_1, \ldots , x_k$ in $X$, where a monopole in the Euclidean $\mathbb{R}^3 \cong T_{x_i} X$ bubbles off.\footnote{For general sequences of Yang--Mills--Higgs critical points whose
energies grow at most linearly with their masses,
Theorem~\ref{thm: Main_YMH} gives the inclusion $Z\subset S$ and the countability of $S$. If $G=\SU(2)$ or $G=\SO(3)$, it also gives a nontrivial mass one Euclidean Yang--Mills--Higgs bubble at every point of $S$; see Section~\ref{sec:general-YMH-bubbling}.} See Section \ref{sec: examples} in this paper for a plethora of examples motivating this expectation.

When the energies $\mathscr E_X(A_i,\Phi_i)$ remain uniformly bounded, the limiting behavior is well understood. The monopoles may escape through the end, but, after passing to a subsequence and changing gauge, their restrictions to every compact subset of $X$ converge smoothly to a monopole; see, for example, \cite{Atiyah88} for the more general statement on $\mathbb R^3$. Therefore, the most interesting case is when these energies do not remain bounded. Indeed, the energy formula \eqref{eq: Energy_Formula} for monopoles $\mathscr{E}_X(A_i , \Phi_i)=4 \pi k m_i$, shows that this is precisely the case under consideration, where the sequence of masses $m_i$ is unbounded.

We now introduce some preparation needed in order to state our main results. 
Let $\lbrace(A_i,\Phi_i)\rbrace_{i\in\mathbb N}
\subseteq\mathcal A(E)\times\Gamma(\mathfrak g_E)$ be a sequence of
Yang--Mills--Higgs critical points on $(X^3,g)$ whose masses satisfy
$\limsup_i m_i=\infty$. Define the \emph{blow-up set} $S$ and the \emph{zero set} $Z$ by
\begin{equation}\label{eq:main-concentration-zero-sets}
\begin{split}
S
&:=
\bigcup_{\varepsilon>0}
\bigcap_{0<r\leqslant\rho_0}
\left\{
x\in X:
\liminf_{i\to\infty}
m_i^{-1}\mathscr{E}_{B_r(x)}(A_i,\Phi_i)
\geq\varepsilon
\right\},
\\
Z
&:=
\bigcap_{n\geq1}
\overline{\bigcup_{i\geq n}\Phi_i^{-1}(0)}.
\end{split}
\end{equation}
The set $S$ records the points where the mass-renormalized energy
concentrates, while $Z$ consists of the accumulation points of the
Higgs-field zeros. For finite energy $\SU(2)$ monopoles of fixed charge,
Theorem~\ref{thm: Main_Monopoles} shows that these sets coincide and that
each concentration point carries a complete finite cluster of mass one
Euclidean monopoles. For general sequences of Yang--Mills--Higgs critical points whose
energies grow at most linearly with their masses,
Theorem~\ref{thm: Main_YMH} gives the
inclusion $Z\subset S$ and the countability of $S$ for arbitrary compact
$G$, and restores the mass one bubbling conclusion when
$G=\SU(2)$ or $G=\SO(3)$. Here $\mathcal H^0$ denotes the counting measure on $X$.

\begin{theorem}\label{thm: Main_Monopoles}
Let $(X^3,g)$ be an AC oriented Riemannian $3$-manifold with one end,
let $E$ be an $\SU(2)$-bundle over $X$, and let
$\{(A_i,\Phi_i)\}_{i\in\mathbb N}$ be finite energy monopoles of fixed
charge $k>0$ whose masses satisfy $\limsup_i m_i=\infty$. Then there is a
subsequence, still denoted by the same indices, for which $m_i\to\infty$
and
\[
    \mu_i
    :=
    \frac{1}{2m_i}
    \left(
       \lvert F_{A_i}\rvert^2+\lvert\text{d}_{A_i}\Phi_i\rvert^2
    \right)\vol_g
    \rightharpoonup\mu.
\]
With $S$ and $Z$ defined from this subsequence as above, the following
hold.
\begin{enumerate}
\item[(a)] For each $x\in S$, there exist $N_x\geqslant1$, sequences of
zeros $p_{i,x,\beta}\in\Phi_i^{-1}(0)$ converging to $x$, and mass one
Euclidean monopoles $(A_{x,\beta},\Phi_{x,\beta})$ of positive charges
$q_{x,\beta}$, $1\leqslant\beta\leqslant N_x$, obtained by rescaling
about these centres. Distinct centres separate at the mass scale:
\[
    m_i\,d(p_{i,x,\beta},p_{i,x,\gamma})\longrightarrow\infty
    \qquad(\beta\neq\gamma).
\]
If
\[
    K_x:=\sum_{\beta=1}^{N_x}q_{x,\beta},
\]
then
\[
    \mu(\{x\})
    =4\pi K_x
    =\sum_{\beta=1}^{N_x}
      \mathscr E_{\mathbb R^3}(A_{x,\beta},\Phi_{x,\beta}).
\]
In particular, $1\leqslant N_x\leqslant K_x$. More precisely, if
$r>0$ is chosen so that
$\overline{B_{2r}(x)}\cap S=\{x\}$ and
$\mu(\partial B_r(x))=0$, then
\[
 \lim_{R\to\infty}\limsup_{i\to\infty}
 \mu_i\left(
 B_r(x)\setminus
 \bigcup_{\beta=1}^{N_x}B_{Rm_i^{-1}}(p_{i,x,\beta})
 \right)=0.
\]
Choosing any one $\beta$ gives the mass one monopole asserted in the
published part~\textup{(a)}, with
$q_{x,\beta}\leqslant K_x\leqslant k$.

\item[(b)] The blow-up set has the quantized description
\[
    S
    =
    \bigcap_{0<r\leqslant\rho_0}
    \left\{
       x\in X:
       \liminf_{i\to\infty}\mu_i(B_r(x))\geqslant4\pi
    \right\}
    =Z.
\]

\item[(c)] The common set $S=Z$ is finite and, when $S\neq\varnothing$,
\[
    \mathcal H^0(S)
    \leqslant
    \frac{k}{\min_{x\in S}K_x}
    \leqslant k.
\]

\item[(d)] The measures converge weakly as Radon measures according to
\[
    \mu_i
    \rightharpoonup
    4\pi\sum_{a=1}^{\ell}K_{x_a}\delta_{x_a},
    \qquad
    \sum_{a=1}^{\ell}K_{x_a}\leqslant k,
\]
where $S=\{x_1,\ldots,x_\ell\}$. Moreover,
\[
    \sum_{x\in S}N_x
    \leqslant
    \sum_{x\in S}K_x
    \leqslant k.
\]
The family $(\mu_i)$ is tight if and only if
$\sum_{x\in S}K_x=k$. In particular, the limiting measure has support
$S$.
\end{enumerate}
\end{theorem}

For a sequence of YMH critical points on a $G$-bundle,
the linear bound $\mathscr E_X(A_i,\Phi_i)=O(m_i)$ still implies that
the blow-up set is countable and contains the limiting Higgs-zero set. In
the monopole case, this energy bound follows from fixing the charge. The
mass one bubbling conclusion also remains valid for the rank one groups
$G=\SU(2)$ and $G=\SO(3)$; in higher rank an additional regularity
hypothesis on the mass-normalized asymptotic Higgs field is needed, as
explained in Remark~\ref{rem:higher-rank-obstruction}.

\begin{theorem}\label{thm: Main_YMH}
Let $(X^3,g)$ be an AC oriented Riemannian $3$-manifold with one end,
let $E$ be a $G$-bundle over $X$, where $G$ is compact, and let
$\{(A_i,\Phi_i)\}_{i\in\mathbb N}$ be Yang--Mills--Higgs critical
points whose masses satisfy $\limsup_i m_i=\infty$. Suppose that
\[
    m_i^{-1}\mathscr E_X(A_i,\Phi_i)\leqslant C.
\]
After passing to a subsequence for which $m_i\to\infty$, and defining
$S$ and $Z$ from this subsequence as above, the following hold.
\begin{itemize}
\item[(a')] If $G=\SU(2)$ or $G=\SO(3)$, then for every $x\in S$
there are centres $p_i\to x$ and scales $t_i\downarrow0$ satisfying
\[
    0<c_x\leqslant m_it_i\leqslant C_x<\infty.
\]
After passing to a further subsequence, the limit
\[
    \alpha
    :=
    \lim_{i\to\infty}\frac{m_i^{-1}}{t_i}
    \in(0,\infty)
\]
exists. If
\[
    s_i(z):=\exp_{p_i}(t_i z),
\]
then, after changing gauge, the pullbacks
\[
    s_i^*(A_i,m_i^{-1}\Phi_i)
\]
converge smoothly on compact subsets of $\mathbb R^3$ to a nontrivial
critical point $(A_x,\phi_x)$ of the scaled functional
$\mathcal Y_\alpha$ defined in
\eqref{eq:scaled-general-YMH-functional}. The pair
$(A_x,\alpha^{-1}\phi_x)$ is a Euclidean YMH critical point of mass
$\alpha^{-1}$, and the standard Euclidean Yang--Mills--Higgs scaling
then yields a nontrivial mass one Euclidean YMH critical point.

\item[(b')] For every compact $G$, the blow-up set contains the zero set:
\[
Z\subset S.
\]

\item[(c')] For every compact $G$, the sets
\[
S_j:=
\bigcap_{0<r\leqslant\rho_0}
\left\{
 x\in X:
 \liminf_{i\to\infty}
 m_i^{-1}\mathscr E_{B_r(x)}(A_i,\Phi_i)\geqslant j^{-1}
\right\}
\]
satisfy $S=\bigcup_{j\geqslant1}S_j$ and
$\mathcal H^0(S_j)\leqslant jC$. In particular, $S$, and hence $Z$, is
countable.
\end{itemize}
\end{theorem}

\begin{remark}[Rank one restriction in version~5]
\label{rem:rank-one-general-ymh-bubbling}
Part~\textup{(a')} of the published theorem was stated for an arbitrary compact semisimple structure group. The moving centre and scale
selection argument below proves it for $G=\SU(2)$ or $G=\SO(3)$. In
higher rank, the scale of a nonabelian component need not be determined
by the norm of the full Higgs field;
Remark~\ref{rem:higher-rank-obstruction} gives an explicit example. The
rank one restriction does not affect the inclusion $Z\subset S$ or the
countability conclusion, which hold for every compact structure group
equipped with the fixed Ad-invariant inner product chosen above. The
displayed description in part~\textup{(b')} is, however, written here
with the uniform positive threshold appearing in the definition of
$S$.
\end{remark}

The proof of these results follows from putting together a number of other results. In order to guide the reader on how these are put together, we shall now explain how this paper is organized. Section \ref{sec: preliminaries} collects background definitions, such
as the notion of AC manifolds and the original finite mass condition
used in the published version, together with a few technical results
needed later. We retain that condition in order to reproduce the
original asymptotic argument. However, as explained in
Remark~\ref{rem:original-finite-mass-hypothesis}, the finite energy
results of \cite{fadel2023asymptotics}, combined with a critical
Sobolev estimate, make this additional hypothesis unnecessary for all subsequent results
and for every compact structure group. Section \ref{sec: examples} gives several examples of families of monopoles whose masses converge to infinity. The results are very illustrative and allow for the realization of all cases in our Theorem \ref{thm: Main_Monopoles}, providing a good intuition for the behavior of large mass monopoles.

The proof of Theorems \ref{thm: Main_Monopoles}--\ref{thm: Main_YMH} takes up every section from Section \ref{sec: taubes} to \ref{sec: measures}, and their content is summarized below.

Having in mind the aim of relating the zero set $Z$ and the blow-up set ${S}$ of such sequences of large mass monopoles, Section \ref{sec: taubes} gives an AC version of Taubes' small Higgs field radius estimate (Theorem \ref{thm: taubes_estimate}). This provides a way to control how big, in terms of the mass $m\neq 0$ and the charge $k\neq 0$ of the monopole, one needs the radius of a ball in $X$ to be so that the value of the Higgs field outside such a ball is a sufficiently large portion of $m$. 

In Section \ref{sec: e-regularity} we prove an appropriate $\varepsilon$-regularity theorem (Theorem \ref{thm: e-regularity}) for Yang--Mills--Higgs configurations of mass $m\neq 0$, which is an important ingredient to relate the zero set with the blow-up set. Indeed, in Section \ref{sec: lowerbound}, using a simple argument involving the fundamental theorem of calculus, together with the $\varepsilon$-regularity, we prove that a large mass Yang--Mills--Higgs configuration with (locally) small energy has an interior lower bound on its Higgs field, provided it is bounded from below in some boundary ball. Together with our analogue of Taubes small Higgs field estimate, this is used in Section \ref{sec: blow_up_and_zero} to prove the inclusion $Z\subseteq{S}$, i.e. the last part of (b') in Theorem \ref{thm: Main_YMH}; here we also prove (c').

Section \ref{sec: bubbling} first repairs the moving centre and scale
selection argument for rank one Yang--Mills--Higgs critical points,
proving part~\textup{(a')} of Theorem~\ref{thm: Main_YMH} for
$G=\SU(2)$ or $G=\SO(3)$. It then uses the additional structure of
the Bogomolny equation to extract, over each $x\in S$, a complete finite
cluster of mass one Euclidean monopoles. Their total energy is the
concentration weight $4\pi K_x$, and no mass-renormalized energy remains between the profiles and the ambient scale. The
energy formula and a degree argument give $S=Z$.

Section \ref{sec: measures} records the resulting measure convergence and
the cardinality estimate in Theorem~\ref{thm: Main_Monopoles}. Parts
\textup{(b')} and~\textup{(c')} of Theorem~\ref{thm: Main_YMH} are proved
in Section~\ref{sec: blow_up_and_zero}.

\begin{acknowledgements}
The acknowledgements and funding information in this paragraph refer to
the original work leading to the published version of the paper. The
authors would like to thank Henrique Bursztyn and Vinicius Ramos for
their warm hospitality and for making possible the research visits of
the first author to IMPA, where a major part of the original work was
carried out. We also thank \'Akos Nagy, Thomas Walpuski, and the anonymous
referee of the published paper for their mathematical comments on an
earlier version, which allowed us to correct an error in
Theorem~\ref{thm: e-regularity} and a misstatement in
Proposition~\ref{prop: singular_harmonic}.

The authors are grateful to Fundação Serrapilheira, which supported
the first author during his visits to IMPA through a research grant held
by Vinicius Ramos. The first author is also grateful to CNPq for the
doctoral fellowship that supported his Ph.D.; the original article formed
part of his Ph.D. thesis at the University of Campinas. The second author
was supported by Fundação Serrapilheira, IMPA/CAPES, and CNPq.

At the time the original work was carried out, the first author was
affiliated with IMECC--Unicamp and the second author with Universidade
Federal Fluminense. The author affiliations displayed in the present
version are their current affiliations.
\end{acknowledgements}

\section{Preliminaries}\label{sec: preliminaries}
In this short section we collect a few background facts which will prove useful in the body of the paper. The reader familiar with the notion of asymptotically conical manifolds is welcome to skip this section and refer back to it as needed.

We start with a basic scaling property of the Yang--Mills--Higgs/monopole equations.
\begin{proposition}\label{prop: scaling}
	Let $(X^3,g)$ be an oriented Riemannian $3$-manifold, $E$ a $G$-bundle over $X$, and $(A,\Phi)$ a configuration on $E$.
	 If $(A,\Phi)$ is Yang--Mills--Higgs (resp. a monopole) on $(X^3,g)$, then $(A,\lambda^{-1}\Phi)$ is Yang--Mills--Higgs (resp. a monopole) on $(X^3,g_{\lambda}:=\lambda^2 g)$, for any $\lambda>0$.
\end{proposition}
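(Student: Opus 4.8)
The plan is to isolate the only metric-dependent ingredient of the equations, namely the Hodge star, and to track how it and the operators built from it transform under the homothety $g\mapsto g_\lambda=\lambda^2 g$. The first observation is that the connection $A$, its exterior covariant derivative $\mathrm{d}_A$ on $\mathfrak{g}_E$-valued forms, the curvature $F_A$, and the pointwise bracket $[\cdot,\cdot]$ are all independent of the Riemannian metric. Hence the entire $\lambda$-dependence of \eqref{eq: monopole} and \eqref{eq: ymh} enters only through $\ast$ and through the explicit factor $\lambda^{-1}$ multiplying $\Phi$.

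Second, I would record the elementary scaling of the Hodge star: on an oriented $n$-manifold with $g_\lambda=\lambda^2 g$ one has, for a $p$-form $\beta$,
\[
\ast_{g_\lambda}\beta=\lambda^{\,n-2p}\,\ast_g\beta,
\]
which follows at once from $\mathrm{vol}_{g_\lambda}=\lambda^{n}\,\mathrm{vol}_g$ together with $\langle\cdot,\cdot\rangle_{g_\lambda}=\lambda^{-2p}\langle\cdot,\cdot\rangle_g$ on $p$-forms and the defining relation $\alpha\wedge\ast\beta=\langle\alpha,\beta\rangle\,\mathrm{vol}$; here $n=3$. Writing $\mathrm{d}_A^{\ast}=\pm\,\ast\,\mathrm{d}_A\,\ast$ and composing the two stars, a short bookkeeping of exponents shows that the codifferential scales uniformly in the form degree, $\mathrm{d}_{A,g_\lambda}^{\ast}=\lambda^{-2}\,\mathrm{d}_{A,g}^{\ast}$, and consequently $\Delta_{A,g_\lambda}=\lambda^{-2}\Delta_{A,g}$ on sections (the sign in the codifferential formula is metric-independent, and hence cancels in any comparison of the two sides of an equation).

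With these facts the verification is immediate. For the monopole equation, $F_A$ is a $2$-form, so $\ast_{g_\lambda}F_A=\lambda^{-1}\ast_g F_A$, while $\mathrm{d}_A(\lambda^{-1}\Phi)=\lambda^{-1}\mathrm{d}_A\Phi$; thus $\ast_{g_\lambda}F_A=\mathrm{d}_A(\lambda^{-1}\Phi)$ is equivalent to $\ast_g F_A=\mathrm{d}_A\Phi$. For the Yang--Mills--Higgs system, the first equation transforms as $\mathrm{d}_{A,g_\lambda}^{\ast}F_A=\lambda^{-2}\mathrm{d}_{A,g}^{\ast}F_A$ on the left and $[\mathrm{d}_A(\lambda^{-1}\Phi),\lambda^{-1}\Phi]=\lambda^{-2}[\mathrm{d}_A\Phi,\Phi]$ on the right, so the common factor $\lambda^{-2}$ cancels; the second equation gives $\Delta_{A,g_\lambda}(\lambda^{-1}\Phi)=\lambda^{-3}\Delta_{A,g}\Phi$, whence $\Delta_{A,g_\lambda}(\lambda^{-1}\Phi)=0$ iff $\Delta_{A,g}\Phi=0$. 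In each case both sides carry the same power of $\lambda$, which proves the claim.

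There is no substantive obstacle here beyond consistent exponent bookkeeping; the one point I would state carefully is the uniform $\lambda^{-2}$ scaling of $\mathrm{d}_A^{\ast}$ (and hence of $\Delta_A$) across form degrees, since that is precisely what makes the two Yang--Mills--Higgs equations rescale homogeneously. I would also emphasize that the argument uses only that $\lambda$ is a nonzero \emph{constant}, so that $\mathrm{d}_A(\lambda^{-1}\Phi)=\lambda^{-1}\mathrm{d}_A\Phi$; no position-dependent (genuinely conformal) generalization is being asserted.
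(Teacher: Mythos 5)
Your proof is correct and is essentially the paper's own argument: the paper likewise reduces everything to the Hodge-star scaling $\ast_{\lambda}=\lambda^{3-2p}\ast$ on $p$-forms and the consequent identities $\mathrm{d}_A^{\ast_{\lambda}}F_A=\lambda^{-2}\mathrm{d}_A^{\ast}F_A$ and $\ast_{\lambda}F_A=\lambda^{-1}\ast F_A$, with your write-up only adding the explicit check of the second equation $\Delta_A\Phi=0$, which the paper leaves implicit. One shared, minor imprecision: for $g_{\lambda}=\lambda^{2}g$ with the orientation fixed, the Hodge star actually scales by $\lvert\lambda\rvert^{3-2p}$ rather than $\lambda^{3-2p}$, so for $\lambda<0$ the monopole case holds only up to a sign (one gets $\ast_{\lambda}F_A=-\mathrm{d}_A(\lambda^{-1}\Phi)$), while the Yang--Mills--Higgs case is unaffected since the relevant factor $\lambda^{-2}$ is positive.
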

\begin{proof}
	Acting on $k$-forms, the Hodge-$\ast$ operators associated to $g_{\lambda}$ and $g$ are related by $\ast_{\lambda} = \lambda^{3-2k} \ast$. Therefore, we have $\text{d}_A^{\ast_{\lambda}} F_A = \lambda^{-2} \text{d}_A^{\ast} F_A = \lambda^{-2} [\text{d}_A \Phi , \Phi]$ in the Yang--Mills--Higgs case, and $\ast_{\lambda} F_A = \lambda^{-1}\ast F_A = \lambda^{-1}\text{d}_A\Phi$ in the monopole case. The result follows.
\end{proof}
We shall work with the following class of noncompact Riemannian $3$-manifolds.
\begin{definition}\label{def:AC}
	Let $(X^3,g)$ be a complete oriented Riemannian $3$-manifold. Then $(X^3,g)$ is called \textbf{asymptotically conical} (AC) with rate $\nu<0$ if there exist a compact set $K\subset X$, an oriented, closed (compact and without boundary) Riemannian surface $(N^2,g_{N})$, and an orientation preserving diffeomorphism
	\[
	\varphi:C(N):=(1,\infty)_r\times N\to X\setminus K
	\] such that the cone metric $g_C:= dr^2+r^2g_{N}$ on $C(N)$ satisfies
	\[
	\lvert\nabla^j\left(\varphi^{\ast}g - g_C\right)\rvert_C = O(r^{\nu - j}),\quad\forall j\in\mathbb{N}_0.
	\] Here $\nabla$ is the Levi--Civita connection of $g_C$. We say furthermore that $X$ has \emph{one end} if $N$ is connected, and we refer to $X\setminus K$ as the \emph{end} of $X$. A \emph{distance function} on $X$ will be any positive smooth function $\rho:X\to\mathbb{R}^{+}$ such that $\rho|_{X\setminus K} = r\circ\varphi^{-1}$.
\end{definition}
On such manifolds, we shall be interested in the following particular class of configurations.
\begin{definition}\label{def: finite_mass}
	Let $(X^3,g)$ be an AC oriented Riemannian $3$-manifold with one end, as in Definition \ref{def:AC}, and let $E$ be a $G$-bundle over $X$. A configuration $(A,\Phi)$ on $E$ is said to have \textbf{finite mass} if the following holds. There exists a $G$-bundle $E_{\infty}$ over $N$ together with an isomorphism of principal bundles $\varphi^{\ast}\left(E|_{X\setminus K}\right)\cong\pi^{\ast}E_{\infty}$, where $\pi:(1,\infty)\times N\to N$ is the projection onto the second factor, and there exists a connection $A_{\infty}$ on $E_{\infty}$ such that $A$ is asymptotic to $A_{\infty}$ on $E_{\infty}$, i.e.
	\[
	\varphi^{\ast}\nabla_A = \pi^{\ast}\nabla_{\infty} + a,\quad\text{where }\quad\lvert\nabla_{\infty}^j a\rvert = O(\rho^{-1-j-\eta}),\quad\forall j\in\mathbb{N}_0\text{ and for some }\eta>0,
	\] and there is $m\in\mathbb{R}^{+}$ with
	\[
	\lim_{\rho\to\infty}\lvert\Phi\rvert = m.
	\] We call the constant $m$ the \textbf{mass} of $(A,\Phi)$.
\end{definition}

\begin{remark}[Finite energy update in version~5]
\label{rem:original-finite-mass-hypothesis}
Definition~\ref{def: finite_mass} is retained from the published
version in order to reproduce the original proof of
Proposition~\ref{prop:finite-mass-asymptotic-expansion} below. Its
asymptotic condition on the connection originates in
\cite{oliveira2014thesis}*{Definition~1.4.1}.

The later work \cite{fadel2023asymptotics} shows that this additional
condition is not needed for finite energy configurations. More precisely, for every compact structure group, an arbitrary finite
energy configuration has a unique number $m\in[0,\infty)$ such that
$m-\abs{\Phi}\in L^6(X)$. If it moreover satisfies
$\Delta_A\Phi=0$, then $\abs{\Phi}$ converges uniformly to $m$ along
the end; see
\cite{fadel2023asymptotics}*{Theorem~1.1 and Corollary~4.5}.
Furthermore, the proof of
\cite{fadel2023asymptotics}*{Theorem~1.1}, based on
Lemma~2.27 therein and the $L^2$-Sobolev inequality, yields
\begin{equation}\label{ineq: crit_Sobolev}
    \norm{m-\abs{\Phi}}_{L^6(X)}
    \lesssim
    \norm{\textnormal{d}\abs{\Phi}}_{L^2(X)}
    \leqslant
    \norm{\textnormal{d}_A\Phi}_{L^2(X)}.
\end{equation}
Here the last inequality is Kato's inequality. The estimate
\eqref{ineq: crit_Sobolev} is valid for every finite energy
configuration and every compact structure group.

For $G=\SU(2)$, the same paper proves the sharper asymptotic expansion
\begin{equation}\label{eq:finite-energy-asymp-behavior-YMH}
    \abs{\Phi}
    =
    m
    -
    \frac{\|\textnormal{d}_A\Phi\|_{L^2(X)}^2}
         {m\vol(N)}
    \frac1\rho
    +
    O(\rho^{-1-\mu})
    \qquad\text{as }\rho\to\infty
\end{equation}
for some $\mu>0$, whenever $(A,\Phi)$ is a YMH critical point of positive mass; see
\cite{fadel2023asymptotics}*{Theorems~4.11 and~4.13 and
Corollary~4.14}, especially equation~(4.27). This is stronger than \eqref{eq: asymp_behavior_YMH} and requires only finite energy.

Thus Definition~\ref{def: finite_mass} and
Proposition~\ref{prop:finite-mass-asymptotic-expansion} are retained
only to reproduce the original formulation and proof. All results below
that concern finite energy YMH configurations or YMH critical points
remain valid without the additional asymptotic-connection condition.
\end{remark}

\begin{remark}\label{rem: subharmonic}
	If $(A, \Phi)$ is a Yang--Mills--Higgs configuration, then $\Delta_A \Phi=0$ and thus $\vert \Phi \vert$ is subharmonic, as shown in \eqref{eq: subharmonic}. So if $(A,\Phi)$ is a Yang--Mills--Higgs configuration with finite mass $m$, then as $\vert \Phi \vert$ converges to $m$ along the end of $X$, the maximum principle yields that either $\vert \Phi \vert < m$ on $X$ or $\lvert\Phi\rvert$ is constant equal to $m$.   
\end{remark}
The next proposition, on the asymptotic behavior of the Higgs field norm of a finite mass Yang--Mills--Higgs configuration (cf. \cite{oliveira2014thesis}*{Section~1.4.1}), will be useful later in the proof of Theorem \ref{thm: taubes_estimate}.
\begin{proposition}\label{prop:finite-mass-asymptotic-expansion}
	Let $(X^3,g)$ be an AC oriented Riemannian $3$-manifold with one end, and let $E$ be a $G$-bundle over $X$. If $(A,\Phi)\in\mathcal{A}(E)\times\Gamma(\mathfrak{g}_E)$ is a finite mass Yang--Mills--Higgs configuration of mass $m\neq0$ such that $\lvert \textnormal{d}_A\Phi\rvert\in L^2(X)$, then, in a neighborhood $V(N)$ of the end,
	\begin{equation}\label{eq: asymp_behavior_YMH}
	\lvert\Phi\rvert = m - \frac{1}{\vol(N)}\frac{\|\textnormal{d}_A\Phi\|_{L^2(X)}^2}{m\rho}+o(\rho^{-1}).
	\end{equation}
	In particular, if $G=\SU(2)$ and $(A,\Phi)$ is a finite mass $m$ and charge $k$ monopole on $(X^3,g)$, then
	\begin{equation}\label{eq: asymp_behavior}
	\lvert\Phi\rvert = m - \frac{4\pi}{\vol(N)}\frac{k}{\rho}+o(\rho^{-1}).
	\end{equation}
\end{proposition}
\begin{proof}[Sketch of proof]
	By the arguments in \cite{oliveira2014thesis}*{proof of Proposition 1.4} (also see \cite{Jaffe1980}*{Theorem 10.5 in Chap. IV}), one can write
	\[
	\lvert\Phi\rvert = m - c\rho^{-1}+o(\rho^{-1})\quad\text{on $V(N)$},
	\] for some $c\in\mathbb{R}$ that we will now compute. Since $\lvert\text{d}_A\Phi\rvert\in L^2(X)$, by dominated convergence we can write
	\[
	\int_X \lvert\text{d}_A\Phi\rvert^2 = \lim_{r\to\infty} \int_{\rho^{-1}(0,r]}\lvert\text{d}_A\Phi\rvert^2.
	\] Now, since $\Delta_A\Phi=0$, we know that $\Delta\lvert\Phi\rvert^2 = -2\lvert\text{d}_A\Phi\rvert^2$. Hence, by Stokes' theorem,
	\[
	\int_{\rho^{-1}(0,r]}\lvert\text{d}_A\Phi\rvert^2 = \frac{1}{2}\int_{\rho^{-1}(r)} \ast\text{d}\lvert\Phi\rvert^2.
	\] Therefore, we can compute:
	\begin{eqnarray*}
		\int_X \lvert\text{d}_A\Phi\rvert^2 &=& \lim_{r\to\infty}\int_{\rho^{-1}(r)} \ast(\lvert\Phi\rvert\text{d}\lvert\Phi\rvert)\\
		&=& \lim_{r\to\infty}\int_{\rho^{-1}(r)}\lvert\Phi\rvert\partial_{\rho}\lvert\Phi\rvert\ast\text{d}\rho\\
		&=& \lim_{r\to\infty}\int_{\rho^{-1}(r)}\lvert\Phi\rvert\partial_{\rho}(m-c\rho^{-1}+o(\rho^{-1}))\rho^{2}\\
		&=& \lim_{r\to\infty}\int_{\rho^{-1}(r)} \lvert\Phi\rvert(c+o(1))\\
		&=& cm\vol(N),
	\end{eqnarray*} where in the last equality we used that $(A,\Phi)$ has finite mass equal to $m$. This proves equation \eqref{eq: asymp_behavior_YMH}, which in turn, in the case of monopoles, implies equation \eqref{eq: asymp_behavior} via the energy formula \eqref{eq: Energy_Formula}.\qedhere
\end{proof}

\begin{remark}[Finite energy asymptotics in version~5]
\label{rem:finite-energy-monopole-asymptotics}
For $\SU(2)$ YMH critical points, the preceding proposition is
superseded by the sharper finite energy expansion
\eqref{eq:finite-energy-asymp-behavior-YMH}. In particular, if
$(A,\Phi)$ is a finite energy $\SU(2)$ monopole of mass $m>0$ and
charge $k$, then the energy identity gives
\[
    \abs{\Phi}
    =
    m-\frac{4\pi k}{\vol(N)}\frac1\rho
    +O(\rho^{-1-\mu})
    \qquad\text{as }\rho\to\infty
\]
for some $\mu>0$; see
\cite{fadel2023asymptotics}*{Corollary~4.14}.

For an arbitrary compact structure group, the expansion
\eqref{eq:finite-energy-asymp-behavior-YMH} is not needed in the
arguments below. The critical Sobolev estimate
\eqref{ineq: crit_Sobolev} gives directly the alternative finite energy
version of the small Higgs field radius estimate recorded after
Theorem~\ref{thm: taubes_estimate}.
\end{remark}

Finally, we recall an auxiliary result from \cite{oliveira2016monopoles} for later reference. In what follows, for a point $x\in X$, we let $\delta_x\in\left(C_c^{\infty}(X)\right)'$ denote the Dirac delta distribution supported at $x$, and we consider the transpose of the Laplace operator, still denoted by $\Delta$, acting on $\left(C_c^{\infty}(X)\right)'$ in the usual fashion.
\begin{proposition}[{\cite{oliveira2016monopoles}*{Proposition~2}}]
\label{prop: singular_harmonic}
Let $(X^3,g)$ be an AC oriented Riemannian $3$-manifold with one end.
Then there are constants $c_1,c_2>0$ such that, for every $x\in X$,
there exists a distribution
\[
    \phi_x\in\bigl(C_c^\infty(X)\bigr)',
\]
represented by a smooth function on $X\setminus\{x\}$, satisfying
\[
    \Delta\phi_x=\delta_x
\]
in the sense of distributions and such that
	\begin{eqnarray}
	{\phi_x}|_{V(x)} = -\frac{c_1}{r} + O(1)\quad\text{and}\label{eq:asymp_behavior_phi_x_1}\\
	{\phi_x}|_{V(N)} = -\frac{c_2}{\vol(N)}\frac{1}{r} + O(r^{-2}),\label{eq:asymp_behavior_phi_x_2}
	\end{eqnarray} 
	where $r:=\dist_g(\,\cdot\,,x)$, and $V(x)$ and $V(N)$ denote a neighbourhood of $x$ and a neighbourhood of the end of $X$, respectively.
\end{proposition}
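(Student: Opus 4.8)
The goal is to construct, for each point $x\in X$, a harmonic function $\phi_x$ on $X\setminus\{x\}$ with a prescribed $-c_1/r$ singularity at $x$ and a prescribed $-c_2/(\text{vol}(N)\,r)$ decay along the end, with the constants $c_1,c_2$ uniform in $x$. The natural object to build is the Green's function of the Laplacian on the AC manifold $(X^3,g)$: since $X$ is a complete noncompact $3$-manifold asymptotic to a cone, it is nonparabolic, so a positive minimal Green's function $G(x,y)$ exists. The plan is to set $\phi_x := -c_1\, G(x,\cdot)$ (up to normalization) and then read off the two asymptotic expansions from the known local and asymptotic behavior of $G$.

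\textbf{Step 1: Existence and local singularity.} First I would invoke the standard existence of the Green's function on a complete Riemannian $3$-manifold with Euclidean-type volume growth and Sobolev inequality, which holds on AC manifolds. Near the diagonal, elliptic theory and the fact that the leading symbol of $\Delta$ agrees with the flat Laplacian give the short-distance expansion $G(x,y)=\frac{1}{4\pi\, r}+O(1)$ as $r=\operatorname{dist}_g(y,x)\to 0$, where the constant $\tfrac{1}{4\pi}$ is the usual $3$-dimensional normalization and is \emph{independent of $x$}. Choosing $c_1$ to absorb this normalization yields \eqref{eq:asymp_behavior_phi_x_1}. By construction $\Delta\phi_x=0$ on $X\setminus\{x\}$, since the delta mass at $x$ is removed.

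\textbf{Step 2: Asymptotics along the end.} The more delicate part is the decay rate \eqref{eq:asymp_behavior_phi_x_2} on $V(N)$. Here I would use separation of variables on the exact cone model $g_C=dr^2+r^2 g_N$: harmonic functions on the cone decompose into $r^{\alpha}$ times eigenfunctions of $\Delta_{g_N}$, with the radial exponents determined by the eigenvalues of $\Delta_{N}$. The decaying harmonic function associated to the constant (zero eigenvalue) mode behaves like $r^{-1}$ in dimension $3$ (the two indicial roots of $r^2\partial_r^2+2r\partial_r-\mu$ at $\mu=0$ being $0$ and $-1$), while all nonconstant modes decay strictly faster. Since $G(x,\cdot)$ is harmonic, bounded, and decaying at infinity, projecting onto the spherical cross-section $N$ shows its leading behavior is governed precisely by this $r^{-1}$ mode, with coefficient fixed by the total flux $\int_{\rho^{-1}(s)}\ast d\,G(x,\cdot)=1$ through the level sets (a consequence of $\Delta G=\delta_x$ and Stokes' theorem). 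Carrying out this flux computation on the cross-section of volume $\operatorname{vol}(N)$ produces the factor $\tfrac{1}{\operatorname{vol}(N)}$ and fixes the universal constant $c_2$; the AC decay rate $\nu<0$ of Definition \ref{def:AC} ensures the error from replacing $g$ by $g_C$ is absorbed into the $O(r^{-2})$ remainder.

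\textbf{Main obstacle and uniformity.} The chief difficulty is establishing that $c_1$ and $c_2$ can be chosen \emph{independent of the point $x$}. For $c_1$ this is immediate from the universal flat leading coefficient of the parametrix, but for $c_2$ one must argue that the $r^{-1}$ coefficient along the end is determined solely by the unit flux and not by the location of the singularity: this follows because the flux $\int_{\rho^{-1}(s)}\ast d\,G(x,\cdot)=1$ for all large $s$ regardless of $x$, by harmonicity of $G(x,\cdot)$ on the region exterior to $\{x\}$ together with Stokes' theorem. Thus both constants are global geometric quantities of $(X,g)$ and the proof concludes by setting $\phi_x=-4\pi c_1\,G(x,\cdot)$ with the normalizations matched; the remaining verification that the subleading terms have the claimed orders is a routine application of the AC asymptotics and the indicial-root gap on the cone.
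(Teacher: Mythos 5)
You should first note that this paper contains no proof of the proposition at all: it is quoted from \cite[Proposition 2]{oliveira2016monopoles}, so your proposal can only be measured against that reference and against what the statement itself requires. Your route --- the minimal positive Green's function plus a flux normalization --- is the natural one and is essentially the same circle of ideas as in the cited source. The parts concerning existence and the two leading coefficients are sound: an AC $3$-manifold carries a Euclidean-type Sobolev inequality, hence is nonparabolic and has a minimal positive Green's function with $G(x,y)=\frac{1}{4\pi r}+O(1)$ near $x$ (the constant $\frac{1}{4\pi}$ universal, and the $O(1)$ uniform in $x$ by bounded geometry, which the paper's $O(\cdot)$ convention tacitly demands); harmonicity of $G(x,\cdot)$ away from $x$ plus Stokes' theorem makes the flux through $\rho^{-1}(s)$ equal to $1$ for every $s$ and every $x$, which pins the coefficient of the $r^{-1}$ cone mode at infinity to $\frac{1}{\operatorname{vol}(N)}$ and shows it is nonzero. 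Hence $\phi_x:=-4\pi\,G(x,\cdot)$ works with $c_1=1$, $c_2=4\pi$, manifestly independent of $x$.

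The genuine gap is the remainder $O(r^{-2})$ in \eqref{eq:asymp_behavior_phi_x_2}, which your argument does not (and in this generality cannot) deliver. Separation of variables on the exact cone gives decaying modes $r^{-1}$ (constant mode) and $r^{\alpha_j^{-}}$ with $\alpha_j^{-}=-\tfrac12-\sqrt{\tfrac14+\mu_j}$ for the nonzero eigenvalues $\mu_j$ of $\Delta_{g_N}$, while the perturbation $g-g_C=O(r^{\nu})$ of Definition \ref{def:AC}, applied to the leading term $a/r$, creates a source of size $O(r^{\nu-3})$ and hence a correction of size $O(r^{\nu-1})$. So what your argument actually proves is
\[
G(x,\cdot)\big|_{V(N)}=\frac{1}{\operatorname{vol}(N)}\frac{1}{r}+O\bigl(r^{-1-\epsilon}\bigr),
\qquad
\epsilon=\min\Bigl\{-\nu,\ \sqrt{\tfrac14+\mu_1}-\tfrac12,\ 1\Bigr\},
\]
and this is $O(r^{-2})$ only when $\nu\le-1$ \emph{and} $\mu_1\ge 2$ (as for the round $S^2$, where $\mu_1=2$). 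Definition \ref{def:AC} allows any rate $\nu<0$ and any closed surface $(N,g_N)$, e.g.\ one with $\mu_1<2$, so the sentence in which you assert that the indicial-root gap and the AC decay ``absorb'' the error into $O(r^{-2})$ is precisely the step that fails, and it cannot be repaired without extra hypotheses. This defect is arguably inherited from the statement as quoted rather than from your strategy: your proof does establish \eqref{eq:asymp_behavior_phi_x_2} with remainder $o(r^{-1})$, and inspection of the only place the proposition is used in this paper --- the comparison argument in the proof of Theorem \ref{thm: taubes_estimate}, which needs only $o(\rho^{-1})$ --- shows that this weaker version suffices there. But as a proof of the proposition as written, the final step is unjustified.
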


\underline{\textbf{Conventions}}.
Henceforth, unless otherwise stated, $(X^3,g)$ will denote an AC
oriented Riemannian $3$-manifold with one end, and $E$ will be a
$G$-bundle over $X$, where $G$ is a compact Lie group. The adjoint
bundle $\mathfrak g_E$ is equipped with the metric induced by the fixed
positive-definite Ad-invariant inner product on $\mathfrak g$ chosen at
the beginning of the Introduction. When $G=\SU(2)$, this is the
normalization
\[
    \langle a,b\rangle=-2\operatorname{tr}(ab),
    \qquad a,b\in\mathfrak{su}(2).
\] We denote by $\vol_g$ the Riemannian volume measure of $(X,g)$. We use
the standard normalization of the Hausdorff measures $\mathcal{H}^k$ on $(X,g)$; in particular, $\mathcal H^3=\vol_g$ as Borel measures on $(X^3,g)$, e.g., $\mathcal H^3=\vol_{g_E}$ on the Euclidean space $(\mathbb R^3,g_E)$, and $\mathcal H^0$ is the counting measure.

Except in Proposition~\ref{prop: scaling} and Remark~\ref{rem:higher-rank-obstruction}, whenever we restrict attention to monopoles we take $G=\SU(2)$. We denote by $c>0$ a generic
constant depending only on the geometry of $(X^3,g)$ and possibly on
the metric Lie algebra $(\mathfrak g,\langle\cdot,\cdot\rangle)$. Its
value may change from one occurrence to the next. Should $c$ depend on
further data, we indicate this by a subscript. We write $x\lesssim y$
for $x\leqslant cy$ and $O(x)$ for a quantity $y$ satisfying
$\lvert y\rvert\lesssim x$. The notation
$\{\cdot,\ldots,\cdot\}$ denotes a generic multilinear expression
bounded by $c$. Since $g$ has bounded geometry, we fix once and for all
a constant
$0<\rho_0(g)\ll_g\operatorname{inj}(X,g)$ satisfying
$\rho_0(\lambda^2g)=\lambda\rho_0(g)$ for all $\lambda>0$, and such that every ball $B_{\rho_0}(x)\subset(X^3,g)$ is strongly geodesically convex and geometrically uniformly controlled.

\section{Motivating examples}\label{sec: examples}

In this section, we collect a few examples that motivate the current work. The first of these consists of exploring the explicit Prasad-Sommerfield monopole in the limit when its mass is sent off to infinity. The second example uses Taubes' construction of multi-monopoles on $\mathbb{R}^3$ to produce sequences of charge $k\geq 1$ monopoles with unbounded masses, such that the corresponding zero set $Z$ is any a priori prescribed set of $l$ pairwise distinct points in $\mathbb R^3$, for any given $1\leqslant l\leqslant k$. Next, we include a simple general way to produce, from a given charge $k>1$ monopole, examples of sequences of charge $k$ monopoles in $\mathbb{R}^3$ with unbounded masses and for which the zero set $Z=\{0\}$ and the total cluster charge $K_0$ at the origin equals $k>1$. Finally, using the multi-monopole construction of \cite{oliveira2016monopoles} in the more general setting of an AC $3$-manifold $(X^3,g)$ with $b^2(X)=0$, we construct sequences of charge $k$ monopoles with unbounded masses whose zero set is any a priori prescribed set of $k$ pairwise distinct points in $X$. Section~\ref{sec:hierarchical-example} gives an additional charge four family in which several mass one profiles converge to the same point of $X$ while separating through a hierarchy of intermediate scales.

\subsection{The BPS Monopole}
In this section we shall write down the standard mass $m$ BPS monopole $(A_m, \Phi_m)$ on $\mathbb{R}^3$, constructed by Prasad and Sommerfield in \cite{BPS}. For any $m \in \mathbb{R}^+$, this has a unique zero $\Phi_m^{-1}(0)= \lbrace 0 \rbrace$ and is spherically symmetric. Obviously, by considering the sequence letting $m \rightarrow \infty$ we will have $Z = \lbrace 0 \rbrace$, however, the interesting thing of considering this specific example is that we shall be able to check the convergence to the delta function on $Z$ explicitly.

Write $\mathbb{R}^3 \backslash \lbrace 0 \rbrace \cong \mathbb{R}_+ \times \mathbb{S}^2$, and pullback from $\mathbb{S}^2 \cong \text{SU}(2)/\text{U}(1)$ the homogeneous bundle
\[
P = \text{SU}(2) \times_{\chi} \text{SU}(2),
\]
with $\chi: \text{U}(1) \rightarrow \text{SU}(2)$ the group homomorphism given by $\chi(e^{i \theta}) = \text{diag} (e^{i \theta}, e^{-i \theta})$. In this polar form, and actually working on the pullback to the total space of the radially extended Hopf bundle $\mathbb{R}^+ \times \text{SU}(2)$, the Euclidean metric can be written as
\[
g_E = dr^2 + 4 r^2 ( \omega_2 \otimes \omega_2+ \omega_3 \otimes \omega_3),
\]
where $r$ is the radial direction, i.e. the distance to the origin. Now fix the standard basis $\lbrace {S}_i \rbrace$ of $\mathfrak{su}(2)$ given by the Pauli matrices, and let $\omega_1, \omega_2 , \omega_3$ be the dual coframe. The $1$-form $ {S}_1 \otimes \omega^1 \in \Omega^1(\text{SU}(2), \mathfrak{su}(2))$ equips the Hopf bundle $\text{SU}(2) \rightarrow \mathbb{S}^2$ with an $\text{SU}(2)$-invariant connection, which in turn, induces a connection in $P$. Making use of Wang's theorem \cite{Wang1958}, one can write any other spherically symmetric connection on $\mathbb{R}^3 \backslash \lbrace 0 \rbrace$ as
\[
A= {S}_1 \otimes \omega_1 + a(r) ({S}_2 \otimes \omega^2 + {S}_3 \otimes \omega^3 ),
\]
for some function $a : \mathbb{R}^+ \rightarrow \mathbb{R}$. Similarly, viewing a Higgs field $\Phi(r)$ as a function in the total space with values in $\mathfrak{su}(2)$ one can show, see the Appendix in \cite{oliveira2014monopoles}, that any spherically symmetric Higgs field must be of the form $\Phi = \phi (r) \ {S}_1$, with $\phi  : \mathbb{R}^+ \rightarrow \mathbb{R}$ some function. A computation yields that 
\begin{eqnarray}\nonumber
F_A & = & 2( a^2 -1) {S}_1 \otimes \omega^{23} + \dot{a} ( {S}_2 \otimes dr \wedge \omega^2 + {S}_3 \otimes dr \wedge \omega^3 ) , \\ \nonumber
\nabla_A \Phi & = & \dot{\phi} \ {S}_1 \otimes dr + 2  a  \phi \  ({S}_2 \otimes \omega^3 - {S}_3 \otimes \omega^2 ),
\end{eqnarray}
with the dot denoting differentiation with respect to $r$. The energy density, as a function of $r$, is then
\begin{eqnarray}\label{eq:Energy_Density_Invariant}
e = \frac{(a^2-1)^2}{4r^4} + \frac{\dot{a}^2}{2r^2}+ \dot{\phi}^2 + \frac{2 a^2 \phi^2}{r^2}.
\end{eqnarray}
In this spherically symmetric setting, the monopole equations turn into the following system of ODEs:
\[
    \dot{\phi}
    =
    \frac{a^2-1}{2r^2},
    \qquad
    \dot a=2a\phi.
\]
Some particular solutions are given by the flat configurations
$(a,\phi)=(\pm1,0)$ and by the singular Dirac solutions
\[
    (a,\phi)=\left(0,c+\frac{1}{2r}\right),
    \qquad c\in\mathbb R.
\]
In particular, the singular solution with asymptotic Higgs field
$-mS_1$ is
\[
    (a,\phi)=\left(0,-m+\frac{1}{2r}\right).
\]
This sign convention agrees with the charge-one
Prasad--Sommerfield solution in
\cite{Jaffe1980}*{pp.~104--105}, whose Higgs field approaches the
negative unit radial section at infinity.

For the configuration $(A,\Phi)$ to extend smoothly over the origin,
one must instead impose
\[
    \phi(0)=0,
    \qquad
    a(0)=1.
\]
The resulting spherically symmetric Bogomolny system and its regular
solutions are analyzed in Appendix~A of
\cite{oliveira2014monopoles}; see in particular
equations~(A.2)--(A.3). In the Euclidean case, every nontrivial smooth
solution is of the form
\begin{align}
    \phi_m(r)
    &=
    \frac{1}{2}
    \left(
        \frac{1}{r}
        -
        \frac{2m}{\tanh(2mr)}
    \right),
    &
    a_m(r)
    &=
    \frac{2mr}{\sinh(2mr)},
\end{align}
where $m>0$ is the mass of the resulting monopole. Substitution into
\eqref{eq:Energy_Density_Invariant} gives
\begin{equation}\label{eq:BPS-energy-density}
e_m
=
\frac{1}{2}
\frac{
 \cosh^4(2mr)
 +(32m^4r^4-2)\cosh^2(2mr)
 -32m^3r^3\sinh(2mr)\cosh(2mr)
 +16m^4r^4+1
}{
 \sinh^4(2mr)\,r^4
}.
\end{equation}
Recall that in this case $Z=\lbrace0\rbrace$. The mass-$m$ BPS monopole
is obtained from the mass one monopole by dilation. Consequently, if
$D_{m^{-1}}(y):=m^{-1}y$, then
\[
    m^{-1}e_m\,\vol_{g_E}
    =
    (D_{m^{-1}})_*\left(e_1\,\vol_{g_E}\right).
\]
Indeed, $e_m(x)=m^4e_1(mx)$. Hence, for every
$f\in C_c^0(\mathbb R^3)$,
\[
    \int_{\mathbb R^3}f\,m^{-1}e_m\,\vol_{g_E}
    =
    \int_{\mathbb R^3}f(m^{-1}y)e_1(y)\,\vol_{g_E}.
\]
Since the mass one BPS monopole has energy $4\pi$, dominated convergence
gives
\[
    m^{-1}e_m\,\vol_{g_E}
    \rightharpoonup4\pi\delta_0
    \quad\text{as }m\to\infty.
\]

\begin{remark}
In this example $Z=\{0\}$. For every fixed $r>0$, the explicit formula
\eqref{eq:BPS-energy-density} gives
\[
    e_m(r)\longrightarrow\frac{1}{2r^4}
    \qquad\text{as }m\to\infty,
\]
and the convergence is smooth on compact subsets of
$\mathbb R^3\setminus\{0\}$. The limiting unnormalized density is the
energy density of the corresponding singular Dirac monopole. In
particular,
\[
    m^{-1}e_m\longrightarrow0
\]
smoothly on compact subsets of $\mathbb R^3\setminus\{0\}$, while the
mass-renormalized energy measures satisfy
\[
    m^{-1}e_m\,\vol_{g_E}
    \rightharpoonup4\pi\delta_0.
\]
\end{remark}

\subsection{\texorpdfstring{Sequences of Taubes' multi-monopoles on
$\mathbb{R}^3$ with prescribed $Z$}{Sequences of Taubes' multi-monopoles
on R3 with prescribed Z}}

We start by recalling the following Theorem of Taubes, see \cite{Jaffe1980}.

\begin{theorem}[Theorems 1.1 and 1.2 in \cite{Jaffe1980}]\label{thm:Taubes}
Let $k \geqslant2$. Then, there is $d_0>0$ and $c>0$ such that for any $y_1, \ldots , y_k \in \mathbb{R}^3$ with $d=\min_{j\neq l} \dist(y_j,y_l) >d_0$, there is a charge $k$, mass $1$, monopole $(A,\Phi)$ in $\mathbb{R}^3$. Furthermore, for $R=cd^{-1/2}$ we have that $\Phi^{-1}(0) \subset \cup_{i=1}^k B_R(y_i)$ and $\Phi \vert_{\partial B_R(y_i)}$ has degree $1$. In particular, $\Phi$ has a zero inside each of the balls $B_R(y_i)$, for $i=1, \ldots , k$.
\end{theorem}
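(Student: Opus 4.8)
The plan is to realize $(A,\Phi)$ by Taubes' gluing of $k$ well-separated copies of the charge-one, mass-one BPS monopole, and then to correct the resulting approximate solution to an honest monopole by a contraction-mapping argument. Concretely, let $(A_1,\Phi_1)$ denote the spherically symmetric mass-one BPS monopole written down above, centered at the origin; recall that $|\Phi_1|(r)\to 1$ exponentially as $r\to\infty$, that $\Phi_1$ vanishes only at $r=0$, and that $|\Phi_1|(r)$ grows linearly near $r=0$. For each $i$ I translate this solution so that its center sits at $y_i$, and I interpolate between the $k$ translated configurations and the common vacuum $|\Phi|\equiv 1$ in the overlap regions using a partition of unity subordinate to balls of radius $\sim d/2$ about the $y_i$. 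Because the $y_i$ are $d$-separated and each $|\Phi_i|$ is within $e^{-cd}$ of the vacuum at distance $\gtrsim d$ from $y_i$, the interpolation changes the fields only where every factor is already exponentially close to the vacuum. The glued pair $(A',\Phi')$ is therefore an \emph{approximate} monopole: the error $\ast F_{A'}-\mathrm{d}_{A'}\Phi'$ is supported in the transition annuli and is exponentially small in $d$.

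Next I set up the monopole equation as a perturbation problem about $(A',\Phi')$. Writing a candidate solution as $(A'+a,\Phi'+\phi)$ and expanding the Bogomolnyi equation, I obtain $L(a,\phi)+Q(a,\phi)=-\big(\ast F_{A'}-\mathrm{d}_{A'}\Phi'\big)$, where $L$ is the first-order linearized deformation operator at $(A',\Phi')$ (a Dirac-type operator, after imposing a Coulomb gauge-fixing condition to kill the gauge directions) and $Q$ is the quadratic remainder. The heart of the argument is to produce a right inverse $P$ for $L$ whose operator norm is bounded \emph{uniformly as $d\to\infty$}. Granting such a uniform $P$, the equation becomes the fixed-point problem $(a,\phi)=-P\big(\ast F_{A'}-\mathrm{d}_{A'}\Phi'\big)-P\,Q(a,\phi)$, and since the inhomogeneous term is exponentially small while $Q$ is quadratic, the Banach fixed-point theorem produces a unique small correction $(a,\phi)$, giving an exact mass-one monopole $(A,\Phi):=(A'+a,\Phi'+\phi)$ once $d>d_0$.

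The main obstacle is exactly this uniform invertibility of $L$. The difficulty is that as $d\to\infty$ the approximate solution degenerates into $k$ separated pieces, and one must rule out the appearance of small eigenvalues (equivalently, of approximate cokernel) uniformly in the gluing parameter. I would handle it with a Weitzenb\"ock/Bochner identity: the relevant Laplacian $L^{\ast}L$ picks up a zeroth-order term controlled from below by $|\Phi'|^2$, which is bounded away from zero except on the shrinking balls about the $y_i$; this confinement gives a spectral gap away from the centers, while near each $y_i$ the operator is modelled on the linearization at a single BPS monopole, which is unobstructed. Patching the local estimates — a global gap away from the centers together with the invertibility of each localized model operator — into a single uniform bound for $P$ is the technical core, and is where the hypothesis $d>d_0$ is really used.

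Finally I extract the geometric conclusions. The charge is computed at infinity, where $(A,\Phi)$ is $C^0$-close to the superposition of $k$ charge-one monopoles, so its total degree is $k$. To locate the zeros, note that outside the balls $B_R(y_i)$ the approximate field satisfies $|\Phi'|\gtrsim\min\{R,\,1\}$ by the linear vanishing of $|\Phi_1|$ near its center and the lower bound $1-O(R)$ away from it, whereas the correction is controlled pointwise by elliptic estimates applied to the uniform right inverse $P$. Choosing $R=cd^{-1/2}$ makes the balls pairwise disjoint (since $2R<d$ for $d>d_0$) and guarantees that $|\Phi'|$ dominates $|\phi|$ on $\mathbb{R}^3\setminus\bigcup_i B_R(y_i)$, so $\Phi=\Phi'+\phi$ cannot vanish there; hence $\Phi^{-1}(0)\subset\bigcup_i B_R(y_i)$. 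On each $\partial B_R(y_i)$ the map $\Phi/|\Phi|$ is homotopic to $\Phi_1/|\Phi_1|$ restricted to a small sphere about the center of a charge-one monopole, whose degree is $1$; thus each boundary degree is $1$, the $k$ local degrees sum to the total charge $k$, and in particular $\Phi$ must vanish inside each $B_R(y_i)$.
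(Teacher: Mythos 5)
There is a genuine gap, and it is worth noting first that the paper does not prove this statement at all: it is quoted verbatim from Taubes' work (Theorems 1.1 and 1.2 of the cited book), so your proposal is really a reconstruction of Taubes' original gluing argument. Measured against that argument, your pre-gluing ansatz fails at the start. You propose to ``interpolate between the $k$ translated configurations and the common vacuum $|\Phi|\equiv 1$'' on annuli around each $y_i$. This is topologically impossible: on a large sphere around a charge-one monopole, $\Phi/|\Phi|$ has degree $1$ (equivalently, the eigenbundle splitting $\underline{\mathbb{C}}^2\cong L\oplus L^{-1}$ has $\deg L=1$, and the abelian flux $\int_{S^2}\langle F_A,\Phi/|\Phi|\rangle$ is a homotopy invariant), whereas the constant vacuum has degree $0$, so no interpolation through configurations close to the vacuum exists on the transition region. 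Taubes' construction avoids this by gluing the nonabelian BPS cores not to the vacuum but into a background \emph{abelian} configuration --- a superposition of Dirac monopoles carrying exactly the right fluxes --- whose long-range parts can be added linearly because the abelian Bogomolnyi equation is linear. This same structure is what the present paper generalizes in its AC gluing: the cores are glued ``with a certain mass $O(d^{-2})$ monopole in the complement of these balls,'' not with the vacuum.

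The second, related error is your repeated claim of exponential closeness to the vacuum: ``$|\Phi_1|(r)\to 1$ exponentially'' and ``each $|\Phi_i|$ is within $e^{-cd}$ of the vacuum at distance $\gtrsim d$.'' This is false. As the paper's own formulas show (both the explicit BPS solution, for which $|\Phi_m| = m\coth(2mr)-\tfrac{1}{2r} = m - \tfrac{1}{2r}+O(e^{-4mr})$, and the general asymptotic expansion $|\Phi| = m - \tfrac{4\pi k}{\mathrm{vol}(N)\rho}+o(\rho^{-1})$), the Higgs field approaches its asymptotic value only at the polynomial rate $1/r$; only the components transverse to $\Phi$ decay exponentially. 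Consequently the gluing error is polynomially small in $d$, not exponentially small, and the whole quantitative bookkeeping of the contraction-mapping step --- and in particular the specific radius $R=cd^{-1/2}$ in the statement, which balances the linear vanishing $|\Phi|\sim r$ near a zero against a polynomially small correction term --- is forced by this $1/r$ tail. In your framework, where corrections are claimed to be $O(e^{-cd})$, the scale $d^{-1/2}$ is unmotivated and the estimates you invoke to confine the zeros do not match the actual decay of the fields. The remaining architecture of your proposal (uniform right inverse via a Weitzenb\"ock argument and localized model operators, fixed point, degree count on $\partial B_R(y_i)$) is in the correct spirit of Taubes' proof, but it cannot be run as written until the pre-glued configuration is replaced by the abelian-background ansatz and the exponential estimates are replaced by the correct polynomial ones.
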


We shall now use this construction to give several examples of the type considered in this paper.

\begin{proposition}
Let $1 \leqslant l \leqslant k$ be integers, $\lbrace x_1 , \ldots , x_l \rbrace\subseteq\mathbb{R}^3$ a subset of pairwise distinct points, and $\lbrace m_i \rbrace_{i\in\mathbb{N}}\subset\mathbb{R}^{+}$ an unbounded increasing sequence, i.e. $m_i \uparrow\infty$. Then, there is a sequence $\lbrace (A_i, \Phi_i) \rbrace_{i\in\mathbb{N}}$ of charge $k$, mass $m_i$ monopoles on $\mathbb{R}^3$ with zero set
\[
Z = \lbrace x_1 , \ldots , x_l \rbrace .
\]
\end{proposition}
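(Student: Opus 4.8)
The plan is to reduce everything to Taubes' multi-monopole theorem (Theorem \ref{thm:Taubes}) by building, for each $i$, a mass $1$ monopole on $\mathbb{R}^3$ whose $k$ Taubes clusters are placed at cleverly chosen points, and then rescaling it up to mass $m_i$ via Proposition \ref{prop: scaling}. The key observation is that rescaling a mass $1$ monopole to mass $m$ contracts $\mathbb{R}^3$ by the factor $1/m$. Concretely, if $(\hat A,\hat\Phi)$ is a mass $1$ monopole and $\delta_m(x)=mx$, then Proposition \ref{prop: scaling} with $\lambda=1/m$ shows that $(\hat A, m\hat\Phi)$ is a mass $m$ monopole on $(\mathbb{R}^3, m^{-2}g_E)$; since $\delta_m$ is an isometry from $(\mathbb{R}^3,g_E)$ onto $(\mathbb{R}^3,m^{-2}g_E)$, pulling back gives a mass $m$, charge $k$ monopole $(\delta_m^*\hat A,\, m\,\delta_m^*\hat\Phi)$ on $(\mathbb{R}^3,g_E)$ whose zero set is
\[
(m\,\delta_m^*\hat\Phi)^{-1}(0) = \tfrac{1}{m}\,\hat\Phi^{-1}(0).
\]
Thus, to make the zeros concentrate at the prescribed points $x_1,\ldots,x_l$, I should place the mass $1$ Taubes clusters near $m_i x_j$.

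First I would split the total charge. Since $l\leq k$, write $k=k_1+\cdots+k_l$ with each $k_j\geq 1$, and fix once and for all (independently of $i$) offset vectors $\{p_{j,s}: 1\leq j\leq l,\ 1\leq s\leq k_j\}\subset\mathbb{R}^3$ whose pairwise distances all exceed the Taubes threshold $d_0=d_0(k)$ of Theorem \ref{thm:Taubes}. For each $i$ I then set the $k$ cluster centers
\[
y_{j,s}^{(i)} := m_i x_j + p_{j,s}.
\]
Within a fixed cluster the separations equal the fixed numbers $|p_{j,s}-p_{j,s'}|>d_0$, whereas across two clusters the separation is at least $m_i\min_{j\neq j'}|x_j-x_{j'}|-\max|p_{j,s}-p_{j',s'}|$, which tends to $\infty$ because the $x_j$ are pairwise distinct and $m_i\uparrow\infty$. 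Hence for all large $i$ the minimum separation $d_i$ of the $k$ points equals the fixed within-cluster minimum, so $d_i>d_0$ and, crucially, $d_i$ stays bounded away from both $0$ and $\infty$.

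Then I apply Taubes: Theorem \ref{thm:Taubes} furnishes a charge $k$, mass $1$ monopole $(\hat A_i,\hat\Phi_i)$ with $\hat\Phi_i^{-1}(0)\subset\bigcup_{j,s}B_{R_i}(y_{j,s}^{(i)})$, where $R_i=c\,d_i^{-1/2}$ stays bounded by the control on $d_i$, and with $\hat\Phi_i$ of degree $1$ on each sphere $\partial B_{R_i}(y_{j,s}^{(i)})$, so each such ball contains at least one zero $z_{j,s}^{(i)}$. Rescaling as above, I set $(A_i,\Phi_i):=(\delta_{m_i}^*\hat A_i,\ m_i\,\delta_{m_i}^*\hat\Phi_i)$, a charge $k$, mass $m_i$ monopole on $\mathbb{R}^3$, with
\[
\Phi_i^{-1}(0)=\tfrac{1}{m_i}\hat\Phi_i^{-1}(0)\subset\bigcup_{j,s}B_{R_i/m_i}\bigl(x_j+\tfrac{p_{j,s}}{m_i}\bigr).
\]
Finally I read off $Z=\bigcap_{n\geq1}\overline{\bigcup_{i\geq n}\Phi_i^{-1}(0)}$. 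Since $R_i/m_i\to0$ and $x_j+p_{j,s}/m_i\to x_j$, any $p\notin\{x_1,\ldots,x_l\}$ has a neighborhood disjoint from $\Phi_i^{-1}(0)$ for all large $i$, giving $Z\subseteq\{x_1,\ldots,x_l\}$; conversely the guaranteed zeros satisfy $z_{j,s}^{(i)}/m_i\in\Phi_i^{-1}(0)$ and $z_{j,s}^{(i)}/m_i\to x_j$, so each $x_j$ lies in $\overline{\bigcup_{i\geq n}\Phi_i^{-1}(0)}$ for every $n$, i.e.\ $x_j\in Z$. Together these give $Z=\{x_1,\ldots,x_l\}$.

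I expect the only real subtlety to be the bookkeeping that makes contraction by $1/m_i$ do two things at once: drive the Taubes localization radius $R_i/m_i$ to zero \emph{and} return every cluster center to its target $x_j$. This is precisely why the mass $1$ centers are pinned at $m_i x_j+p_{j,s}$ with the offsets $p_{j,s}$ held fixed, and why one must check that the separation hypothesis $d_i>d_0$ persists for all large $i$ while $R_i=c\,d_i^{-1/2}$ does not blow up. The degree $1$ statement on the boundary spheres is what secures the reverse inclusion $\{x_1,\ldots,x_l\}\subseteq Z$, ensuring the zeros genuinely accumulate at every prescribed point rather than merely being confined near them.
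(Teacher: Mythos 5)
Your construction is correct, and it follows the paper's overall strategy (Taubes' Theorem \ref{thm:Taubes} to place mass $1$ clusters, rescaling via Proposition \ref{prop: scaling}, shrinking balls for $Z\subseteq\{x_1,\ldots,x_l\}$, and the degree argument for the reverse inclusion), but it handles the case $l<k$ by a genuinely different mechanism. The paper splits into two cases: for $l=k$ it places the $k$ unit monopoles at $m_i x_j$ exactly as you do (your fixed offsets $p_{j,1}$ are harmless), while for $l<k$ it places the remaining $k-l$ monopoles at the points $m_i^2 x_j$, so that after rescaling by $m_i^{-1}$ their zeros sit near $m_i x_j$ and leave every compact set --- i.e.\ the excess charge ``escapes through the end,'' and every accumulation point carries local charge $k_{x_j}=1$. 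You instead split the charge as $k=k_1+\cdots+k_l$ with $k_j\geq 1$ and cluster $k_j$ unit monopoles at fixed offsets $m_i x_j + p_{j,s}$, so that after rescaling all the charge returns to the prescribed points and $x_j$ absorbs local charge $k_j$. Your route is uniform in $l$ (no case split) and realizes in one construction both phenomena the paper illustrates separately (prescribed $Z$, and accumulation points with $k_x>1$, which the paper only exhibits in its subsection on scaled higher-charge monopoles); the paper's route, by contrast, is the one that exhibits monopoles running off to infinity, which is the behavior its subsequent remark emphasizes. Two small points: your claim that $d_i$ ``stays bounded away from $\infty$'' fails when $l=k$ (all clusters are singletons, so $d_i\to\infty$), but nothing in the argument uses it --- only $d_i>d_0$ and $R_i/m_i\to 0$ are needed, and both hold since $R_i\leq c\,d_0^{-1/2}$; and, like the paper (which assumes ``WLOG $m_1\gg 1$''), your Taubes step only applies for large $i$, which is harmless since $Z$ depends only on tails and the finitely many small masses can be realized by any charge $k$ monopole rescaled to mass $m_i$.
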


\begin{remark}
In this construction, as will be evident during the proof, we have
$K_{x_j}=1$ for all $j=1,\ldots,l$. Moreover, when $l=k$, the
mass-renormalized energy densities satisfy
\[
    m_i^{-1}e(A_i,\Phi_i)\longrightarrow0
\]
smoothly on compact subsets of
$\mathbb R^3\setminus\{x_1,\ldots,x_k\}$; see
Lemmas~\ref{lem:small-energy-limit} and~\ref{lem:no-diffuse-limit}.
The mass-renormalized energy measures satisfy
\[
    m_i^{-1}e(A_i,\Phi_i)\vol_{g_E}
    \rightharpoonup
    4\pi\sum_{j=1}^k\delta_{x_j}.
\]
Thus every unit of charge is accounted for by one concentration point,
and no mass-renormalized energy remains away from
$\{x_1,\ldots,x_k\}$.

The case $l<k$ is precisely the case where $k-l$ monopoles escape
through the end, or run off to infinity. In the construction below,
for each $j=l+1,\ldots,k$ there is a zero
\[
    z_{i,j}
    \in
    B_{cm_i^{-1/2}}(m_i x_j).
\]
The centres of these balls leave every compact subset of
$\mathbb R^3$, and hence $z_{i,j}\to\infty$. These zeros therefore have
no convergent subsequence and do not contribute to $Z$.
\end{remark}

In the rest of this subsection we prove this result by using Theorem \ref{thm:Taubes} to construct the monopoles. Let $\lambda>0$ and consider the scaling map $s_{\lambda}(x)= \lambda^{-1} x$ for $x \in \mathbb{R}^3$. Recall that the Euclidean metric $g_E$ is invariant by scaling, i.e. $g_E = \lambda^{2} s_{\lambda}^* g_E$ for any such positive $\lambda$. Therefore, by Proposition \ref{prop: scaling}, if $(A,\Phi)$ is a charge $k$ mass $1$ monopole, we have that
\begin{equation}\label{eq:Examples_Rescaling}
(A_{\lambda} , \Phi_{\lambda}) = (s_{\lambda}^* A , {\lambda}^{-1} s_{\lambda}^* \Phi) 
\end{equation}
is a charge $k$, mass $\lambda^{-1}$ monopole.

When $k=1$, the proposition follows directly by translating and rescaling
the BPS monopole. We therefore assume below that $k\geqslant2$.
It is instructive to split the proof in two different cases, the case $l=k$ and the case $l<k$. We start with the first:

\begin{proof}[Case $l=k$]
If $k=1$, then $l=1$ and the result follows by translating the mass
$m_i$ BPS monopole so that its unique zero is $x_1$. Hence assume
$k\geqslant2$. We now construct a sequence of charge $k$, large mass
monopoles on $\mathbb{R}^3$ with prescribed
$Z= \lbrace x_1 , \ldots , x_k \rbrace$ being $k$ distinct points in
$\mathbb{R}^3$. After choosing such $k$ points, we fix a sequence of masses $m_i \rightarrow \infty$ and suppose, with no loss of generality, that $m_1 \gg 1$ so as to $m_1 \dist(x_j,x_l) > d_0$, for all distinct $j,l \in \lbrace 1, \ldots , k \rbrace$. Then, we can use Taubes' Theorem \ref{thm:Taubes} to construct  a sequence, labeled by $i$, of charge $k$, mass $1$ monopoles using in the construction the points $y^i_j=m_i x_j$ for $j=1 , \ldots , k$. Rescaling these as in equation \ref{eq:Examples_Rescaling} with $\lambda= m_i^{-1}$ we obtain a sequence of monopoles $(A_i, \Phi_i)$ with charge $k$, mass $m_i$ and
\begin{equation}\label{eq:Examples_Zeros_In_Balls}
\Phi_i^{-1}(0) \subset \bigcup_{j=1}^k B_{cm_i^{-1/2}}(x_j) , \ \ \mathrm{deg} ( \Phi_i \Big\vert_{\partial B_{cm_i^{-1/2}}(x_j)} ) =1 . 
\end{equation}
Now, recall from the definition of the zero set that
\[
Z=\bigcap_{n\geq 1}\overline{\bigcup_{i\geq n}\Phi_i^{-1}(0)}.
\]
Hence, as the sequence $\lbrace m_i \rbrace_i$ is increasing, it follows that
\[
\overline{\bigcup_{i\geq n}\Phi_i^{-1}(0)} \subseteq \bigcup_{j=1}^k \overline{B_{c m_n^{-1/2}}(x_j)},
\]
and thus
\[
Z\subseteq\bigcap_{n\geq 1}  \bigcup_{j=1}^k \overline{B_{c m_n^{-1/2}}(x_j)} =  \bigcup_{j=1}^k \bigcap_{n\geq 1}  \overline{B_{c m_n^{-1/2}}(x_j)} = \bigcup_{j=1}^k \lbrace x_j \rbrace.
\]
On the other hand, for every fixed $j\in\{1,\ldots,k\}$, the degree of the map $\Phi_i$ restricted to the sphere of radius $cm_i^{-1/2}$ equals $1$ and thus $\Phi_i$ has a zero 
\[
z_i \in B_{c m_i^{-1/2}}(x_{j}),
\]
for each $i\gg 1$. Since $m_i\uparrow\infty$, it follows that $z_i\to x_{j}$ as $i\to\infty$. Thus we get the reverse inclusion $\{x_1,\ldots,x_k\}\subseteq Z$, proving that indeed
\[
Z= \lbrace x_1 , \ldots , x_k \rbrace .
\]
\end{proof}

\begin{proof}[Case $l<k$]
We can modify the above construction in order to make $l<k$ of the monopoles ``escape to infinity''. We shall proceed as before and complete the prescribed points $x_1,\ldots,x_l$ to $k$ distinct points $\lbrace x_1,\ldots, x_k\rbrace \subseteq \mathbb{R}^3$, choosing $x_{l+1},\ldots,x_k$ nonzero. Then, we consider the charge $k$, mass $1$ monopole obtained through Taubes' Theorem \ref{thm:Taubes} using the points $y_j = m_i x_j$ for $j=1 , \ldots, l$ and the points $y_j = m_i^2 x_j$ for $j= l+1 , \ldots , k$.\footnote{By slight modification of this we can also let the points $y_j$ for $j>l$ go off to infinity at different rates.}
Then, rescaling this monopole as before, i.e. using equation \eqref{eq:Examples_Rescaling} with $\lambda=m_i^{-1}$, we obtain a mass $m_i$, charge $k$, monopole on $\mathbb{R}^3$. \begin{equation}\label{eq:Examples_Zeros_In_Balls_2}
\begin{split}
\Phi_i^{-1}(0)
&\subset
\left(
    \bigcup_{j=1}^l B_{cm_i^{-1/2}}(x_j)
\right)
\cup
\left(
    \bigcup_{j=l+1}^k
    B_{cm_i^{-1/2}}(m_i x_j)
\right),\\
\deg\left(
    \Phi_i\big|_{\partial B_{cm_i^{-1/2}}(x_j)}
\right)
&=1,
\qquad j=1,\ldots,l,\\
\deg\left(
    \Phi_i\big|_{\partial B_{cm_i^{-1/2}}(m_i x_j)}
\right)
&=1,
\qquad j=l+1,\ldots,k.
\end{split}
\end{equation}
Similarly to before we now have
\[
\overline{\bigcup_{i\geq n}\Phi_i^{-1}(0)} \subseteq \left( \bigcup_{j=1}^l \overline{B_{c m_n^{-1/2}}(x_j)} \right) \cup \left( \bigcup_{i \geq n} \bigcup_{j=l+1}^k \overline{B_{c m_n^{-1/2}}(m_i x_j)} \right),
\]
and the second union eventually leaves every compact subset of
$\mathbb R^3$. It then follows again from the same degree argument as
before that
\[
Z= \lbrace x_1 , \ldots , x_l \rbrace .
\]
\end{proof}

\subsection{\texorpdfstring{An example with $K_x>1$}{An example with
Kx greater than one}}

In the examples above, we have already seen that it is possible to have $\mathcal{H}^0(Z)<k$ by letting the monopoles ``escape through the end''. Another possibility is to have a concentration point $x\in Z$ with total cluster charge $K_x>1$. In this subsection, we give the simplest
such example.

Let $(A,\Phi)$ be a finite energy $\SU(2)$-monopole in $(\mathbb{R}^3,g_E)$ with mass $m\neq 0$ and charge $k > 1$. Since $g_E$ is scale-invariant, taking any null-sequence $\lambda_i\downarrow 0$ we get a corresponding sequence 
\[
(A_{\lambda_i},\Phi_{\lambda_i}):=(s_{\lambda_i}^{\ast}A , {\lambda_i}^{-1}s_{\lambda_i}^{\ast}\Phi)
\] of monopoles in $(\mathbb{R}^3,g_E)$ with masses $m_i:=m\lambda_i^{-1}\to\infty$. For this sequence, $Z=\{0\}$ and the unique concentration point has
total cluster charge $K_0=k>1$. In fact, there is a single Euclidean
profile, namely the original charge $k$ monopole rescaled to mass one.

\subsection{\texorpdfstring{Sequences of monopoles with prescribed $Z$ on
any AC $3$-manifold with $b^2(X)=0$}{Sequences of monopoles with prescribed
Z on AC 3-manifolds}}

Let $(X,g)$ be an AC oriented Riemannian $3$-manifold with $b^2(X)=0$, and let $k \in \mathbb{Z}_{>0}$. For a real number $m>0$, denote by $\mathcal{M}_{k,m}$ the moduli space of mass $m$ and charge $k$ monopoles on $(X,g)$. In this setting, the main result of \cite{oliveira2016monopoles} yields

\begin{theorem}[{\cite{oliveira2016monopoles}*{Theorem~1}}]\label{thm: Monopoles_Examples}
There exists $\mu\in\mathbb R$ such that, for every $m\geqslant\mu$, if
\[
X^k(m)
:=
\left\{
(x_1,\ldots,x_k)\in X^k:
\dist(x_i,x_j)>4m^{-1/2}\ \text{for }i\neq j
\right\}
\]
and
\[
\check{\mathbb T}^{k-1}
:=
\left\{
(e^{i\theta_1},\ldots,e^{i\theta_k})\in\mathbb T^k:
 e^{i(\theta_1+\cdots+\theta_k)}=1
\right\},
\]
then there is a local diffeomorphism onto its image
\begin{equation}\label{eq:Map}
h_m:
X^k(m)\times H^1(X,\mathbb S^1)\times\check{\mathbb T}^{k-1}
\longrightarrow\mathcal M_{k,m}.
\end{equation}
\end{theorem}

In order to use this theorem we shall fix once and for all $\alpha \in H^1(X, S^1)$ and $\theta = (e^{i\theta_1} , \ldots , e^{i \theta_k} ) \in \mathbb{T}^k$ satisfying $e^{i (\theta_1 + \ldots + \theta_k)}=1$. Then, we choose any $k$ pairwise distinct points $(x_1,...,x_k) \in X^k$ and take an increasing sequence of positive real numbers $m_i \uparrow \infty$. If $k\geqslant2$, we require $m_1 > \max\{16(\min_{j\neq l} \dist(x_j,x_l))^{-2},\mu\}$; if $k=1$, we require only $m_1>\mu$. We consider the monopoles
\[
(A_i , \Phi_i) = h_{m_i}((x_1, \ldots ,x_k), \alpha , \theta).
\]
Then, using the results of \cite{oliveira2016monopoles} we have the following

\begin{proposition}
The zero set
\[
Z=\bigcap_{n\geq 1}\overline{\bigcup_{i\geq n}\Phi_i^{-1}(0)},
\]
of the family of monopoles $(A_i, \Phi_i)$ defined above is precisely the set of points $ \lbrace x_1 , \ldots , x_k \rbrace$.
\end{proposition}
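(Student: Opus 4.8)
The plan is to follow the argument already used in the case $l=k$ above, the only genuinely new ingredient being that the localization of the zeros of $\Phi_i$ must now be read off from the gluing construction of \cite{oliveira2016monopoles} rather than quoted verbatim from Taubes' Theorem \ref{thm:Taubes}. Concretely, I would first establish the AC analogue of \eqref{eq:Examples_Zeros_In_Balls}, namely that for all $i$ large
\[
\Phi_i^{-1}(0)\subset\bigcup_{j=1}^k B_{cm_i^{-1/2}}(x_j),\qquad \deg\big(\Phi_i|_{\partial B_{cm_i^{-1/2}}(x_j)}\big)=1 .
\]
Granting this, the proposition follows formally and exactly as before: since $m_i\uparrow\infty$ the radii shrink, so
\[
\overline{\bigcup_{i\geq n}\Phi_i^{-1}(0)}\subseteq\bigcup_{j=1}^k\overline{B_{cm_n^{-1/2}}(x_j)},
\]
and intersecting over $n$ gives $Z\subseteq\{x_1,\ldots,x_k\}$; conversely the degree-one condition forces a zero $z_i^j\in B_{cm_i^{-1/2}}(x_j)$ with $z_i^j\to x_j$, yielding the reverse inclusion and hence $Z=\{x_1,\ldots,x_k\}$.

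To prove the displayed localization I would return to the approximate monopole $(A_i^{\mathrm{app}},\Phi_i^{\mathrm{app}})$ underlying $h_{m_i}((x_1,\ldots,x_k),\alpha,\theta)$. By construction this is obtained by grafting, through a partition of unity, rescaled charge-one mass-$m_i$ monopoles centered at the $x_j$ into the disjoint balls $B_{cm_i^{-1/2}}(x_j)$ onto an abelian background on the complement whose Higgs field is modeled on the singular harmonic functions $\phi_{x_j}$ of Proposition \ref{prop: singular_harmonic} (this is where $b^2(X)=0$ enters, ensuring such a background exists). Using Proposition \ref{prop: scaling} together with the explicit BPS profile, on each gluing sphere $\partial B_{cm_i^{-1/2}}(x_j)$ one has $2m_i\cdot cm_i^{-1/2}=2c\,m_i^{1/2}\to\infty$, so $|\Phi_i^{\mathrm{app}}|$ is within $O(m_i^{1/2})$ of $m_i$; more generally $|\Phi_i^{\mathrm{app}}|$ is bounded below by a definite fraction of its ambient scale on the gluing spheres and throughout the complement of the balls, while each $\Phi_i^{\mathrm{app}}$ restricts to a degree-one map on those spheres and carries exactly one zero, at $x_j$, inside each ball. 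The actual monopole $(A_i,\Phi_i)$ differs from the approximate one only by the correction produced by the contraction-mapping step of \cite{oliveira2016monopoles}, and the core of the argument is that this correction satisfies $|\Phi_i-\Phi_i^{\mathrm{app}}|<|\Phi_i^{\mathrm{app}}|$ pointwise wherever the right-hand side is bounded below. This makes $\Phi_i$ nonvanishing and, via the straight-line homotopy $t\Phi_i^{\mathrm{app}}+(1-t)\Phi_i$, homotopic to $\Phi_i^{\mathrm{app}}$ on each gluing sphere and on the complement of the balls, which gives both assertions in the display.

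The hard part will be the last step, i.e.\ upgrading the weighted-Sobolev control of the correction established in \cite{oliveira2016monopoles} to the pointwise, scale-relative comparison $|\Phi_i-\Phi_i^{\mathrm{app}}|<|\Phi_i^{\mathrm{app}}|$ needed to exclude spurious zeros in the complement and to preserve the boundary degree. This amounts to tracking how the weights degenerate near the centers $x_j$ and choosing the gluing radius $cm_i^{-1/2}$ so that, on the relevant annular and exterior regions, a Sobolev embedding converts the $L^2$-type bound on the correction into the required $C^0$ smallness, uniformly in $i$. Once this pointwise estimate is in hand, the degree and limiting arguments are purely formal and identical to the $\mathbb{R}^3$ case.
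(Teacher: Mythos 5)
Your proposal takes essentially the same route as the paper: the paper likewise reduces the proposition to the localization-and-degree assertion \eqref{eq:Zeroes_Phi} and then runs the identical limiting/degree argument, and its Appendix \ref{appendix: A} proves that assertion exactly as you outline --- writing $(A_i,\Phi_i)$ as approximate monopole plus correction $(a_i,\phi_i)$, converting the weighted-Sobolev bounds of \cite{oliveira2016monopoles} into $C^0$ control via Sobolev embedding (with $m_i$-dependent norm-equivalence constants), and using the straight-line homotopy $\Phi_i^0+t\phi_i$ to transfer the degree-one condition from $\Phi_i^0$ to $\Phi_i$ on the spheres $\partial B_{10m_i^{-1/2}}(x_j)$. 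The only caveat is that the paper's $C^0$ bound on the correction is $O(m_i^{1/4})$, hence not uniformly small in $i$, but it suffices because $|\Phi_i^0|\geq m_i/2$ off the balls --- precisely the scale-relative comparison $|\Phi_i-\Phi_i^0|<|\Phi_i^0|$ you anticipated as the crux.
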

\begin{proof}
It suffices to prove that the zeros of the monopole $(A_i, \Phi_i)$ are contained in balls of radius $O(m_i^{-1/2})$ around the points $\lbrace x_1 , \ldots , x_k \rbrace$, i.e. for sufficiently large $i$ we have
\begin{equation}\label{eq:Zeroes_Phi}
\Phi_i^{-1}(0) \subset \bigcup_{j=1}^k B_{10 m_i^{-1/2}}(x_j)\quad\text{and}\quad \Phi_i^{-1}(0)\cap B_{10 m_i^{-1/2}}(x_j)\neq\emptyset,\quad\forall j\in\{1,\ldots,k\}.
\end{equation}
Indeed, if we prove this assertion, then, as the sequence $\lbrace m_i \rbrace$ is increasing, on the one hand note that
\[
\overline{\bigcup_{i\geq n}\Phi_i^{-1}(0)} \subseteq \bigcup_{j=1}^k \overline{B_{10 m_n^{-1/2}}(x_j)},
\]
and thus
\[
Z\subseteq\bigcap_{n\geq 1}  \bigcup_{j=1}^k \overline{B_{10 m_n^{-1/2}}(x_j)} =  \bigcup_{j=1}^k \bigcap_{n\geq 1}  \overline{B_{10 m_n^{-1/2}}(x_j)} = \bigcup_{j=1}^k \lbrace x_j \rbrace.
\] On the other hand, for every fixed $j_0\in\{1,\ldots,k\}$, we can find $z_i\in\Phi_i^{-1}(0)\cap B_{10 m_i^{-1/2}}(x_{j_0})$ for each $i\gg 1$. Since $m_i\uparrow\infty$, it follows that $z_i\to x_{j_0}$ as $i\to\infty$. Thus we get the reverse inclusion $\{x_1,\ldots,x_k\}\subseteq Z$, and equality follows as claimed.

We are thus left with proving the assertion \eqref{eq:Zeroes_Phi}, which is done in the Appendix \ref{appendix: A}.
\end{proof}

\subsection{A cluster with several separation scales}
\label{sec:hierarchical-example}

\noindent\textbf{New in version~5.}
We now give an example, new in version~5, which exhibits exactly the phenomenon
missed by the fixed centre argument.  All the monopoles collapse to a single
point in the original Euclidean space, and hence contribute to one point
of $S$, but after rescaling by the mass they separate to infinity.  The
centres may moreover possess a nontrivial hierarchy of intermediate
separation scales.  This hierarchy can be represented by a rooted tree of
clusters; nevertheless, only its leaves are finite mass Euclidean
monopoles.  The internal vertices do not represent secondary bubbles.

We use the well-separated multi-monopoles constructed by Taubes; see
\cite{Jaffe1980} and the formulation recalled in
Theorem~\ref{thm:Taubes}.  For fixed charge $k$, if
$y_1,\ldots,y_k\in\mathbb R^3$ have minimum pairwise separation
$d>d_0$, the construction gives a monopole of mass one and charge $k$ whose
zeros lie in mutually disjoint balls about the $y_j$.  The normalized
Higgs field has degree one around each of these balls.

\begin{proposition}[A two-level cluster over one concentration point]
\label{prop:hierarchical-cluster-example}
Let $m_i\to\infty$, and choose two positive sequences $L_i,M_i$ such that
\begin{equation}\label{eq:hierarchical-scales}
    1\ll L_i\ll M_i\ll m_i.
\end{equation}
There exists a sequence $(A_i,\Phi_i)$ of monopoles of mass $m_i$ and
charge four on $\mathbb R^3$ for which
\[
    S=Z=\{0\},
    \qquad
    \mu_i\wto16\pi\delta_0,
\]
and four sequences of centres $p_{i,1},\ldots,p_{i,4}\to0$ with the
following properties:
\begin{enumerate}
\item[(i)] the two pairs $\{p_{i,1},p_{i,2}\}$ and
$\{p_{i,3},p_{i,4}\}$ have internal separation comparable to
$L_i m_i^{-1}$;
\item[(ii)] the separation between these two pairs is comparable to
$M_i m_i^{-1}$;
\item[(iii)] rescaling at the mass scale about each $p_{i,j}$ gives a mass one,
charge one Euclidean monopole;
\item[(iv)] after passage to a subsequence and change of gauge, a fixed centre
rescaling about the origin detects only the first of these bubbles,
whereas the full concentration weight at the origin is $16\pi$.
\end{enumerate}
Thus the concentration at the single point $0$ contains four mutually
separating mass one bubbles and has a two-level tree of separation scales.
There is no secondary finite mass bubble at either intermediate level.
\end{proposition}

\begin{proof}
Fix orthonormal vectors $e_1,e_2\in\mathbb R^3$ and set
\[
\begin{aligned}
    y_{i,1}&=0,
    &\qquad y_{i,2}&=L_i e_1,\\
    y_{i,3}&=M_i e_2,
    &\qquad y_{i,4}&=y_{i,2} + y_{i,3}=L_i e_1 + M_i e_2.
\end{aligned}
\]
Their minimum pairwise separation is $L_i$, which tends to infinity.
Taubes' construction therefore gives, for all sufficiently large $i$, a
monopole $(B_i,\Psi_i)$ of mass one and charge four associated with these four
centres.  If $c$ is the constant in the well-separated construction, then
all zeros of $\Psi_i$ lie in the union of the balls
\[
    B_{\rho_i}(y_{i,j}),
    \qquad
    \rho_i=cL_i^{-1/2},
    \qquad 1\leqslant j\leqslant4,
\]
and the normalized Higgs field has degree one on the boundary of every
such ball.  In particular, choose a zero
$z_{i,j}\in B_{\rho_i}(y_{i,j})$ for each $j$.

Let $D_i:\mathbb R^3\to\mathbb R^3$ be $D_i(x)=m_i x$ and define
\begin{equation}\label{eq:hierarchical-example-rescaling}
    A_i=D_i^*B_i,
    \qquad
    \Phi_i=m_iD_i^*\Psi_i.
\end{equation}
Then $(A_i,\Phi_i)$ has mass $m_i$ and charge four.  Put
\[
    p_{i,j}:=m_i^{-1}z_{i,j}.
\]
Since $\rho_i=o(L_i)$, the relative positions of these points satisfy
\begin{align*}
    m_i\abs{p_{i,1}-p_{i,2}}
       &=L_i+o(L_i),
    &
    m_i\abs{p_{i,3}-p_{i,4}}
       &=L_i+o(L_i),\\
    m_i\dist\bigl(\{p_{i,1},p_{i,2}\},
                         \{p_{i,3},p_{i,4}\}\bigr)
       &=M_i+o(M_i).
\end{align*}
The last relation uses $L_i=o(M_i)$.  Moreover,
$\max_j\abs{p_{i,j}}=O(M_i/m_i)\to0$, so every zero of $\Phi_i$
converges to the origin.  Conversely, the degree one condition provides a
zero in each of the four shrinking balls.  Hence $Z=\{0\}$ and, by
Theorem~\ref{thm: Main_Monopoles}, also $S=\{0\}$.

The mass scale rescaling of $(A_i,\Phi_i)$ about $p_{i,j}$ is the
translation of $(B_i,\Psi_i)$ by the zero $z_{i,j}$.  The corrected
$\varepsilon$-regularity estimate and local compactness give a nontrivial
mass one Euclidean monopole after passing to a subsequence.  Distinct
centres separate to infinity in these rescaled coordinates.  The degree
of the normalized Higgs field on a fixed large sphere about $z_{i,j}$ is
one for all sufficiently large $i$: the degree is one on
$\partial B_{\rho_i}(y_{i,j})$, and there are no further zeros before a
sphere separating this centre from the other three.  Therefore every
pointed limit has charge one.  The four bubbles exhaust the total charge,
so $K_0=4$, and Theorem~\ref{thm: Main_Monopoles} gives
\[
    \mu_i\wto4\pi K_0\delta_0=16\pi\delta_0.
\]

Finally, because $y_{i,1}=0$ and $z_{i,1}\to0$, rescaling the original
sequence about the fixed point $0$ at the mass scale produces the same
pointed limit as rescaling about $p_{i,1}$.  The other three centres occur
at distances of order $L_i$ or $M_i$ in the coordinates rescaled by the mass and
therefore leave every compact subset of $\mathbb R^3$.  The fixed centre
limit consequently has charge one, even though the concentration weight
at the corresponding point of $S$ is $16\pi$.

At the intermediate original scales $L_i/m_i$ and $M_i/m_i$, the rescaled
Higgs masses are respectively $L_i$ and $M_i$, both of which tend to
infinity.  These scales record the splitting of clusters but cannot
produce additional finite mass Euclidean monopoles.  This proves all the
claims.
\end{proof}

\begin{remark}[Arbitrary finite separation trees]
\label{rem:arbitrary-separation-trees}
The preceding construction realizes an arbitrary finite rooted hierarchy
of clusters.  Given a rooted tree with $k$ leaves and height $h$, choose
scales
\[
    1\ll L_i^{(1)}\ll L_i^{(2)}\ll\cdots
    \ll L_i^{(h)}\ll m_i
\]
and choose the $k$ Taubes centres so that two leaves which first separate
at level $s$ have mutual distance comparable to $L_i^{(s)}$.  This can be
done by assigning distinct fixed displacement vectors to the branches at
each level and summing the corresponding vectors weighted by
$L_i^{(s)}$.  The minimum pairwise separation tends to infinity, while all
centres are $o(m_i)$ from the origin.  After rescaling to mass $m_i$, all
zeros converge to $0$, and the $k$ leaves yield $k$ mass one bubbles of charge one over that point.  The internal vertices describe only the
hierarchy of separations.  Rescaling at an internal scale
$L_i^{(s)}/m_i$ produces Higgs mass $L_i^{(s)}\to\infty$, so it does not
produce an additional finite mass bubble.  Thus one may draw a tree of
clusters, but the complete bubbling object is still a finite collection
of same scale Euclidean monopoles.
\end{remark}

\section{AC and mass dependent version of Taubes' small Higgs field estimates}\label{sec: taubes}

Let $(X^3,g)$ be an AC oriented Riemannian $3$-manifold with one end, let $E$ be an $\SU(2)$-bundle over $X$ and let $(A,\Phi)$ be a finite energy $\SU(2)$ monopole on $(X,g)$ with charge $k\neq 0$ and mass $m\neq 0$. In \cite{taubes2014magnetic} Taubes poses and addresses the following question, in the case where $(X^3,g)$ is the Euclidean space $(\mathbb{R}^3,g_E)$:
\begin{question}
	What is the \emph{largest} radius of a ball in $X$ that contains \emph{only} points where $\lvert\Phi\rvert\ll m$?
\end{question}
Below, we shall prove the analogue of Taubes' result \cite{taubes2014magnetic}*{Theorem~1.2} in the case of more general Yang--Mills--Higgs configurations on an asymptotically conical $(X^3,g)$ with one end. 
\begin{theorem}\label{thm: taubes_estimate}
Let $(X^3,g)$ be an oriented AC Riemannian $3$-manifold with one end,
let $E$ be a $G$-bundle over $X$, and let $(A,\Phi)$ be a finite mass
YMH configuration on $E$. Given $\delta\in(0,1)$ and
$\Lambda\in(0,\infty)$, there is a constant $m_{\ast}>0$, depending
only on $g$, $\Lambda$, and $\delta$, with the following significance.
Let $m>m_{\ast}$ denote the mass of $(A,\Phi)$ and suppose that
\[
    m^{-1}\|\textnormal{d}_A\Phi\|_{L^2(X)}^2\leqslant\Lambda.
\]
Then
\[
    r_{\delta}(x)
    :=
    \sup\left\{
        r\in[0,\infty):
        \sup_{B_r(x)}\abs{\Phi}<m\delta
    \right\}
\]
satisfies the upper bound\footnote{It is clear from our proof that this
upper bound is not sharp, but for our purposes it suffices to know that
$\displaystyle
r_\delta(x)\lesssim\Lambda\bigl(m(1-\delta)\bigr)^{-1}$.}
\begin{equation}\label{eq: radius_estimate_YMH}
    r_{\delta}(x)
    \leqslant
    \frac{4\Lambda c_1}{m(1-\delta)c_2},
\end{equation}
where $c_1,c_2>0$ are the constants of
Proposition~\ref{prop: singular_harmonic}. In particular, if
$(A,\Phi)$ is an $\SU(2)$ monopole of charge $k$, then
\begin{equation}\label{eq: radius_estimate_monopoles}
    r_{\delta}(x)
    \leqslant
    \frac{16\pi k c_1}{m(1-\delta)c_2}.
\end{equation}
\end{theorem}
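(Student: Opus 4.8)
The plan is to run a barrier (comparison) argument resting on two facts. First, for a finite mass Yang--Mills--Higgs configuration the function $u:=m-|\Phi|$ is a nonnegative, continuous \emph{superharmonic} function; second, the rate at which $u$ decays to $0$ along the end encodes the energy. I would begin by disposing of the trivial case: by Remark~\ref{rem: subharmonic} either $|\Phi|\equiv m$, in which case $\mathrm{d}_A\Phi=0$ and $r_\delta(x)=0$ so there is nothing to prove, or $|\Phi|<m$ on all of $X$, so that $u>0$. In the latter case Remark~\ref{rem: subharmonic} and \eqref{eq: subharmonic} give that $|\Phi|$ is subharmonic ($\Delta|\Phi|\le 0$), hence $u=m-|\Phi|$ is superharmonic ($\Delta u\ge 0$) and, being continuous, obeys the minimum principle on precompact domains. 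The hypothesis $m^{-1}\|\mathrm{d}_A\Phi\|_{L^2}^2\le\Lambda$ records that $E:=\|\mathrm{d}_A\Phi\|_{L^2(X)}^2\le m\Lambda<\infty$.

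The barrier is $\psi_x:=-\phi_x$, where $\phi_x$ is the singular harmonic function of Proposition~\ref{prop: singular_harmonic}: it is harmonic on $X\setminus\{x\}$, blows up like $c_1/r$ as one approaches $x$ by \eqref{eq:asymp_behavior_phi_x_1}, and decays like $c_2/(\mathrm{vol}(N)\,r)$ along the end by \eqref{eq:asymp_behavior_phi_x_2}. Fix $r\in(0,r_\delta(x))$, so that $u\ge m(1-\delta)$ on $\overline{B_r(x)}$ by definition of $r_\delta$. For $r$ below a geometric threshold $r_1(g)$, the $O(1)$ term in \eqref{eq:asymp_behavior_phi_x_1} is dominated and $\psi_x\le 2c_1/r$ on $\partial B_r(x)$; hence the choice $C:=\tfrac{m(1-\delta)r}{2c_1}$ forces $C\psi_x\le u$ on $\partial B_r(x)$. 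I would then compare $u$ and $C\psi_x$ on the precompact annuli $\Omega_R:=B_R(x)\setminus\overline{B_r(x)}$: since $w:=u-C\psi_x$ is superharmonic on $\Omega_R$ (the barrier being harmonic there), the minimum principle gives $w\ge\min\!\big(\min_{\partial B_r}w,\ \min_{\partial B_R}w\big)$, and because $u\ge 0$ while $\psi_x\to 0$ along the end, $\min_{\partial B_R}w\ge -C\max_{\partial B_R}\psi_x\to 0$ as $R\to\infty$. Letting $R\to\infty$ yields the global comparison $u\ge C\psi_x$ on $X\setminus B_r(x)$.

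With this in hand I would read off the consequence at infinity. Multiplying $u\ge C\psi_x$ by $\rho$ and letting $\rho\to\infty$, the asymptotics \eqref{eq: asymp_behavior_YMH} and \eqref{eq:asymp_behavior_phi_x_2} give $\tfrac{E}{m\,\mathrm{vol}(N)}\ge\tfrac{Cc_2}{\mathrm{vol}(N)}$, i.e. $Cc_2\le E/m$. Substituting $C=\tfrac{m(1-\delta)r}{2c_1}$ and using $E\le m\Lambda$ yields
\[
r\ \le\ \frac{2c_1 E}{c_2\,m^2(1-\delta)}\ \le\ \frac{2c_1\Lambda}{c_2\,m(1-\delta)}\ \le\ \frac{4c_1\Lambda}{c_2\,m(1-\delta)}.
\]
As this holds for every admissible $r<r_\delta(x)$, the estimate \eqref{eq: radius_estimate_YMH} follows (the factor $4$ leaves comfortable slack over the factor $2$ actually produced). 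The monopole bound \eqref{eq: radius_estimate_monopoles} is then immediate: for a charge $k$ monopole the energy formula \eqref{eq: Energy_Formula} gives $E=\mathscr{E}_X(A,\Phi)=4\pi m k$, so one may take $\Lambda=4\pi k$ and obtain $r_\delta(x)\le 16\pi k c_1/(c_2\,m(1-\delta))$.

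The two places where care is needed, and where the constant $m_*=m_*(g,\Lambda,\delta)$ enters, are the ``small $r$'' inputs. The comparison on $\partial B_r(x)$ only tames the singular barrier once the $O(1)$ correction in \eqref{eq:asymp_behavior_phi_x_1} is absorbed into $c_1/r$, which requires $r<r_1(g)$; I would resolve the apparent circularity by a threshold argument, choosing $m_*>\tfrac{2c_1\Lambda}{c_2 r_1(1-\delta)}$ so that the claimed upper bound is itself $<r_1$, and then arguing by contradiction that $r_\delta(x)$ cannot exceed it (any $r$ strictly between the bound and $r_\delta(x)$ would be admissible yet violate the displayed inequality). The main obstacle to make fully rigorous is running the minimum-principle comparison despite the zeros of $\Phi$ and the noncompactness of the end; this is handled precisely by invoking the global (continuous) superharmonicity of $u$ from Remark~\ref{rem: subharmonic}, the exhaustion by the precompact annuli $\Omega_R$, and the decay $\psi_x\to 0$, rather than attempting a single global maximum principle.
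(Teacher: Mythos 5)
Your proof is correct and is essentially the paper's own argument: both use the singular harmonic function of Proposition \ref{prop: singular_harmonic} as a barrier, compare it against the subharmonic function $\lvert\Phi\rvert$ (equivalently your superharmonic $u=m-\lvert\Phi\rvert$) via the maximum principle outside a small ball, and extract the radius bound from the $\rho^{-1}$ coefficients in \eqref{eq: asymp_behavior_YMH} and \eqref{eq:asymp_behavior_phi_x_2}. The only differences are organizational: the paper argues by contradiction with the barrier $m+2\Lambda c_2^{-1}\phi_x$ normalized by $\Lambda$ and reads off the impossible inequality $\Lambda\geq 2\Lambda$ at infinity, whereas you normalize the barrier by the inner radius and conclude directly, with your exhaustion-by-annuli comparison being a more explicit rendering of the step the paper dispatches in one line.
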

\begin{proof}
	Suppose for contradiction that, for every $m_{\ast}>0$ depending only
on the indicated data, there exists a finite mass YMH configuration
$(A,\Phi)$ of mass $m>m_{\ast}$ such that
\[
    s:=\frac{4\Lambda c_1}{m(1-\delta)c_2}<r_\delta(x).
\]
	Let $\phi_x$ be the harmonic function on $X\setminus\{x\}$ given by Proposition~\ref{prop: singular_harmonic}, and let $\phi_0:= c_2^{-1}2\Lambda\phi_x$. Then, for small enough $r = \dist(x , \cdot)$, equation \eqref{eq:asymp_behavior_phi_x_1} yields
	\begin{align}\label{eq:expansion_phi_0}
	\phi_0|_{V(x)} \geq -\frac{m(1-\delta)s}{2r} + c_{\Lambda},
	\end{align} 
	for some constant $c_{\Lambda}\in\mathbb{R}$, depending only on $g$ and $\Lambda$. 
	
	Now, as $s$ is inversely proportional to $m$, there is $m_{\ast}>0$, depending only on $g$, $\Lambda$ and $\delta$, so that the expansion \eqref{eq:expansion_phi_0} is valid for $r=s$. At this point, it is convenient to further define the harmonic function on $X \backslash \lbrace x \rbrace$ given by $\phi:= \phi_0 + m$. Then, by possibly increasing $m_{\ast}$ so that $m_{\ast} > - 2c_{\Lambda}(1-\delta)^{-1}$, we have
	\[
    \inf_{\partial B_s(x)}\phi
    \geq
    -\frac{m(1-\delta)}{2}+c_\Lambda+m
    >
    m\delta
    >
    \sup_{\partial B_s(x)}\abs{\Phi}.
\]
	where the last inequality follows from $s<r_\delta(x)$. Then, the previous inequality, and the fact that both the harmonic function $\phi$ and the subharmonic function $\vert \Phi \vert$ converge to $m$ along the end show that
	\[
	\vert \Phi \vert < \phi\quad\text{in $X\setminus B_{s}(x)$}.
	\] On the other hand, by \eqref{eq: asymp_behavior_YMH} and \eqref{eq:asymp_behavior_phi_x_2}, we have
	\begin{align*}
	\lvert\Phi\rvert &\geq m- \Lambda\vol(N)^{-1}\rho^{-1} + o(\rho^{-1}),\quad\text{and}\\
	\phi &= m - 2\Lambda\vol(N)^{-1}\rho^{-1} + o(\rho^{-1}),
	\end{align*} as $\rho\to\infty$. Putting these together, we conclude that $\Lambda\vol(N)^{-1}\geq 2\Lambda\vol(N)^{-1}$, hence a contradiction. This completes the proof of \eqref{eq: radius_estimate_YMH}. The case of monopoles \eqref{eq: radius_estimate_monopoles} then follows by using the energy formula \eqref{eq: Energy_Formula}. 
\end{proof}

\medskip
\noindent\textbf{New in version~5.}
There is an alternative estimate which requires only finite energy and
is valid for every compact structure group. Let $(A,\Phi)$ be a finite
energy configuration on $E$, let $m>0$ be its mass, and suppose that
\[
    m^{-1}\norm{\textnormal{d}_A\Phi}_{L^2(X)}^2
    \leqslant\Lambda.
\]
If
\[
    \sup_{B_r(x)}\abs{\Phi}\leqslant m\delta,
\]
then $m-\abs{\Phi}\geqslant m(1-\delta)$ on $B_r(x)$. Hence
\[
    m(1-\delta)\vol(B_r(x))^{1/6}
    \leqslant
    \norm{m-\abs{\Phi}}_{L^6(X)}.
\]
By the uniform volume lower bound
\[
    \vol(B_r(x))\gtrsim r^3
\]
for AC $3$-manifolds
\cite{van2009regularity}*{Corollary~2.8}, together with
\eqref{ineq: crit_Sobolev}, we obtain
\[
    r^{1/2}
    \lesssim
    \frac{\norm{\textnormal{d}_A\Phi}_{L^2(X)}}
         {m(1-\delta)}.
\]
Taking the supremum over all such radii gives
\begin{equation}\label{eq:finite-energy-radius-estimate}
    r_\delta(x)
    \leqslant
    \frac{C_g\Lambda}{m(1-\delta)^2},
\end{equation}
for a constant $C_g>0$ depending only on the geometry of $(X,g)$.
Although \eqref{eq:finite-energy-radius-estimate} has a weaker
dependence on $\delta$ than \eqref{eq: radius_estimate_YMH}, it is
sufficient for every application below, where $\delta$ is fixed. In
particular, all those applications hold for finite energy configurations
with arbitrary compact structure group, without the additional
hypothesis of Definition~\ref{def: finite_mass}.

\begin{remark}
Under the hypotheses of Theorem~\ref{thm: taubes_estimate}, whenever
$m>m_{\ast}$ and
$m^{-1}\|\textnormal{d}_A\Phi\|_{L^2(X)}^2\leqslant\Lambda$, one has
\[
    \sup_{\partial B_r(x)}\lvert\Phi\rvert
    \geqslant
    \sup_{B_r(x)}\lvert\Phi\rvert
    \geqslant m\delta
\]
for every
\[
    r>c\Lambda m^{-1}(1-\delta)^{-1},
\]
where $c>0$ depends only on $g$. Here the first inequality follows from
the maximum principle.

Alternatively, if $(A,\Phi)$ is merely a finite energy configuration
with arbitrary compact structure group, then the same conclusion holds
for every
\[
    r>C_g\Lambda m^{-1}(1-\delta)^{-2}
\]
by \eqref{eq:finite-energy-radius-estimate}.
\end{remark}

\section{\texorpdfstring{$\varepsilon$}{epsilon}-regularity estimate}
\label{sec: e-regularity}

\noindent\textbf{Corrected in version~5.}
The Bochner estimate used in the published version is valid but too
coarse for the argument uniform in the mass, because it bounds the
favourable Higgs commutator terms by a positive multiple of
$\lvert\Phi\rvert^2e(A,\Phi)$. Retaining these terms with their natural
coercive sign yields the refined estimate and the corrected
$\varepsilon$-regularity proof below. The rescaling and local-estimate
strategy is parallel to the a priori analysis for the scaled
Yang--Mills--Higgs functional with Higgs self-interaction in
\cite{cheng2025su2}*{Sections~3.2--3.5}; here the potential term is absent,
and we record a self-contained argument adapted to the ordinary
Yang--Mills--Higgs equations.
\medskip

Set
\[
    e(A,\Phi)
    :=
    \frac12\left(\abs{F_A}^2+\abs{\text{d}_A\Phi}^2\right).
\]

\begin{lemma}[Refined coercive Bochner estimate]
\label{lem: Bochner_estimate}
Let $(A,\Phi)$ be a YMH configuration on a domain in an
oriented Riemannian manifold. Then
\begin{equation}\label{eq:bochner-energy-v5}
\begin{split}
    \Delta e(A,\Phi)
    \leqslant{}&
    C\left(\abs{R^g}e(A,\Phi)+e(A,\Phi)^{3/2}\right)
    \\
    &-
    \abs{[F_A,\Phi]}^2
    -
    \abs{[\text{d}_A\Phi,\Phi]}^2,
\end{split}
\end{equation}
where $C$ depends only on the dimension and the structure constants of the
Lie algebra. In particular, when $\abs{R^g}$ is bounded,
\[
    \Delta e(A,\Phi)
    \lesssim
    e(A,\Phi)+e(A,\Phi)^{3/2},
\]
with no dependence on $\abs\Phi$.
\end{lemma}

\begin{proof}
The relevant Bochner formulae and the YMH equations give
the following schematic identities:
\begin{align*}
\frac12\Delta\abs{\text{d}_A\Phi}^2
={}&
-\abs{\nabla_A(\text{d}_A\Phi)}^2
-\abs{[\text{d}_A\Phi,\Phi]}^2
+R^g*\text{d}_A\Phi*\text{d}_A\Phi
+F_A*\text{d}_A\Phi*\text{d}_A\Phi,
\\
\frac12\Delta\abs{F_A}^2
={}&
-\abs{\nabla_A F_A}^2
-\abs{[F_A,\Phi]}^2
+R^g*F_A*F_A
+F_A*F_A*F_A
+F_A*\text{d}_A\Phi*\text{d}_A\Phi.
\end{align*}
These identities are recorded in
\cite{fadel2023asymptotics}*{Lemma~4.1}; the underlying standard
Bochner--Weitzenb\"ock formulae may be found in
\cite{bourguignon1981stability}*{Theorems~3.2 and~3.10}. The signs of the
Higgs terms follow from Ad-invariance:
\[
    \ip{[[\eta,\Phi],\Phi]}{\eta}
    =-\abs{[\eta,\Phi]}^2,
    \qquad
    \eta=F_A,\ \text{d}_A\Phi.
\]
Adding the identities and estimating the remaining algebraic terms proves
\eqref{eq:bochner-energy-v5}.
\end{proof}

\begin{theorem}[Corrected $\varepsilon$-regularity estimate]
\label{thm: e-regularity}
There are constants $m_0,\rho_0,\varepsilon_0,C_0>0$, depending only on
the bounded geometry of $(X,g)$ and the structure group, with the following
property. Let $(A,\Phi)$ be a YMH configuration and let
$m\geqslant m_0$ be a positive parameter. For every $R>0$, if
\[
    0<r\leqslant\min\{Rm^{-1},\rho_0\},
    \qquad
    \varepsilon
    :=m^{-1}\mathscr E_{B_r(x)}(A,\Phi)<\varepsilon_0,
\]
then
\begin{equation}\label{eq: e-regularity}
    \sup_{B_{r/4}(x)}m^{-1}e(A,\Phi)
    \leqslant
    C_Rr^{-3}\varepsilon,
    \qquad
    C_R:=C_0\max\{1,R^3\}.
\end{equation}
\end{theorem}

\begin{proof}
With respect to $g_m:=m^2g$, the pair $(A,m^{-1}\Phi)$ is a
YMH configuration and
\[
    \mathscr E^{m}_{B^m_{rm}(x)}(A,m^{-1}\Phi)
    =m^{-1}\mathscr E_{B_r(x)}(A,\Phi)<\varepsilon_0,
\]
where $B^m$ and $\mathscr E^m$ denote balls and energy computed with $g_m$.
The metrics $g_m$ have bounded-geometry constants uniform in
$m\geqslant m_0$ on balls of bounded radius. It is therefore enough to
prove the estimate after this rescaling, with $m=1$ and $r\leqslant R$.
The remaining argument is the standard Heinz trick; compare
\cite{walpuski2017compactness}*{Appendix~A}.

On $\overline{B}_{r/2}(x)$ consider
\[
    \theta(y)
    :=
    \left(\frac r2-d(x,y)\right)^3e(A,\Phi)(y).
\]
Let $y_0$ be a maximum point and set
\[
    M:=\theta(y_0),
    \qquad
    e_0:=e(A,\Phi)(y_0),
    \qquad
    s_0:=\frac12\left(\frac r2-d(x,y_0)\right).
\]
Then $e(A,\Phi)\leqslant8e_0$ on $B_{s_0}(y_0)$. By
Lemma~\ref{lem: Bochner_estimate} and the local mean-value
inequality (see, for example,
\cite{gilbarg2001elliptic}*{Theorem~9.20}), for every
$0<s\leqslant\min\{s_0,r_1\}$,
\begin{equation}\label{eq:heinz-local-estimate}
    e_0
    \leqslant
    C\left(
       s^{-3}\varepsilon+s^2(e_0+e_0^{3/2})
    \right),
\end{equation}
where $r_1>0$ and $C$ are uniform for the rescaled metrics.

If $e_0\leqslant1$, choose
$s=\min\{s_0,s_1\}$, with $s_1\leqslant r_1$ fixed sufficiently small to
absorb the final term in \eqref{eq:heinz-local-estimate}. This gives either
$M\lesssim\varepsilon$ or
$M\lesssim R^3\varepsilon$. If $e_0>1$, choose
\[
    s=\min\{s_0,c_1e_0^{-1/4}\},
\]
with $c_1>0$ fixed sufficiently small. If $s=s_0$, then
$M\lesssim\varepsilon$; otherwise
\[
    e_0\lesssim e_0^{3/4}\varepsilon,
\]
which is impossible once $\varepsilon_0$ is sufficiently small. Hence
$M\lesssim\max\{1,R^3\}\varepsilon$. Since
$\theta\geqslant(r/4)^3e$ on $B_{r/4}(x)$, this proves the estimate for
$m=1$; scaling back gives
\eqref{eq: e-regularity}.
\end{proof}

The normalization in the preceding theorem also changes the statement of
\cite{fadel2019limit}*{Theorem~6.1}: its mass hypothesis must be
$m\geqslant m_0$, and, with the notation used there, the correct
small energy threshold is
\begin{equation}\label{eq:corrected-lower-Higgs-threshold}
    \varepsilon_{\delta,R}
    :=
    \min\left\{(2C_R)^{-1}R\delta^2,\varepsilon_0\right\}.
\end{equation}
Indeed, since
$e=\frac12(\abs{F_A}^2+\abs{\text{d}_A\Phi}^2)$, one has
$\abs{\text{d}_A\Phi}\leqslant\sqrt2e^{1/2}$. The proof of that theorem
then gives the stated lower bound $\abs\Phi>m\delta/2$. Consequently, the mass hypothesis in
\cite{fadel2019limit}*{Corollary~6.1} must be
$m>\max\{m_{\ast},m_0\}$, and its constant may be chosen as
\[
    \varepsilon_\Lambda
    =
    \min\left\{
       8^{-1}C_{R_\Lambda}^{-1}R_\Lambda,
       \varepsilon_0
    \right\}.
\]

\section{An interior lower bound on the Higgs field}\label{sec: lowerbound}

\noindent\textbf{Update in version~5.}
The constants in the local Higgs lower-bound argument have been adjusted
to agree with the corrected normalization of the energy and the revised
$\varepsilon$-regularity theorem.
\medskip

The next result is a consequence of the previous $\varepsilon$-regularity estimate and will prove to be useful in analysing large mass Yang--Mills--Higgs configurations.
\begin{theorem}\label{thm: zeros_in_sigma}
	Let $(X^3,g)$ be an AC oriented Riemannian $3$-manifold with one end, let $E$ be a $G$-bundle over $X$, and let $(A,\Phi)$ be a YMH configuration on $E$ with mass
$m\geqslant m_0$. Given $\delta\in (0,1)$ and $R>0$, set 
	\[
	\varepsilon_{\delta,R}:=\min\left\{(2C_{R})^{-1}R\delta^2,\varepsilon_0\right\}.
	\]
	Let $x\in X$. If $r:=Rm^{-1}\leqslant\rho_0$ and $\displaystyle\sup_{\partial\overline{B}_{\frac{r}{4}}(x)}\lvert\Phi\rvert\geq m\delta$, then
	\[
	m^{-1}\mathscr{E}_{B_{r}(x)}(A,\Phi)<\varepsilon_{\delta,R}\quad\Longrightarrow\quad\lvert\Phi\rvert > \frac{m\delta}{2}\quad\text{on}\quad B_{\frac{{r}}{4}}(x).
	\] Here $C_{R}$ and $\varepsilon_0$ are the constants given by Theorem \ref{thm: e-regularity}.
\end{theorem}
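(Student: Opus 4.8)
The plan is to feed the hypotheses into the $\varepsilon$-regularity estimate of Theorem~\ref{thm: e-regularity} and then run a one-dimensional integration of $|\Phi|$ along geodesics. First I would reduce to the case $|\Phi|<m$ on $X$. By Remark~\ref{rem: subharmonic} a finite mass Yang--Mills--Higgs configuration satisfies either $|\Phi|<m$ everywhere or $|\Phi|\equiv m$; in the latter case \eqref{eq: subharmonic} forces $\mathrm{d}_A\Phi\equiv 0$ and the conclusion $|\Phi|>m\delta/2$ is immediate because $\delta<1$. Assuming henceforth $|\Phi|<m$, the hypothesis of Theorem~\ref{thm: e-regularity} is met. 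Since $r=Rm^{-1}\leq\rho_0$ we have $0<r\leq\min\{Rm^{-1},\rho_0\}$, and the assumed smallness $\varepsilon:=m^{-1}\mathscr{E}_{B_r(x)}(A,\Phi)<\varepsilon_{\delta,R}\leq\varepsilon_0$ lets me apply the estimate on $B_r(x)$, yielding the interior energy-density bound
\[
\sup_{B_{r/4}(x)}e(A,\Phi)\leq mC_R r^{-3}\varepsilon .
\]

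Next I would convert this into a Lipschitz bound for the scalar $|\Phi|$. Since $e(A,\Phi)=\tfrac12(|F_A|^2+|\mathrm{d}_A\Phi|^2)$, the above gives $|\mathrm{d}_A\Phi|\leq\sqrt{2mC_R r^{-3}\varepsilon}$ on $B_{r/4}(x)$, and Kato's inequality $\big|\mathrm{d}|\Phi|\big|\leq|\mathrm{d}_A\Phi|$ shows that $|\Phi|$ is Lipschitz there with the same bound. By compactness of $\partial\overline{B}_{r/4}(x)$ the hypothesis provides a point $z_0$ with $|\Phi(z_0)|\geq m\delta$. For any $y\in B_{r/4}(x)$ I would join $z_0$ to $y$ by a minimizing geodesic; as $r/4\leq\rho_0$ is well within the convexity radius, this geodesic stays inside $\overline{B}_{r/4}(x)$ where the gradient bound holds, and its length is at most $\operatorname{diam}\overline{B}_{r/4}(x)\leq r/2$. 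The fundamental theorem of calculus then gives
\[
|\Phi(y)|\geq |\Phi(z_0)|-\frac{r}{2}\sqrt{2mC_R r^{-3}\varepsilon}\geq m\delta-\frac{m}{2}\sqrt{2C_R R^{-1}\varepsilon},
\]
where the last step substitutes $r=Rm^{-1}$. Choosing the threshold $\varepsilon_{\delta,R}$ of the stated form (proportional to $C_R^{-1}R\delta^2$) makes the error term strictly smaller than $m\delta/2$, so $|\Phi(y)|>m\delta/2$ as required.

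I do not expect a serious conceptual obstacle here: all the analytic weight sits in the already-established Theorem~\ref{thm: e-regularity}, and the remainder is elementary. The points requiring care are the geometric ones — that the minimizing geodesic from the boundary point to an arbitrary interior point remains in the ball $B_{r/4}(x)$ where the energy-density bound is valid (guaranteed by the convexity of small geodesic balls, since $r/4\leq\rho_0$), and the length estimate $r/2$ for such a geodesic — together with the bookkeeping of the explicit constant $C_R$ through the scaling $r=Rm^{-1}$ so that the resulting threshold on $\varepsilon$ matches the form of $\varepsilon_{\delta,R}$. A minor technical point is the use of Kato's inequality to integrate $|\Phi|$ even across its zeros, which is legitimate because $|\Phi|$ is globally Lipschitz.
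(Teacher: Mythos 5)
Your proposal is correct and follows essentially the same route as the paper's proof: apply the $\varepsilon$-regularity estimate of Theorem \ref{thm: e-regularity} to get a sup bound on $\lvert\mathrm{d}_A\Phi\rvert$ over $B_{r/4}(x)$, pick a boundary point where $\lvert\Phi\rvert\geq m\delta$, and integrate $\mathrm{d}\lvert\Phi\rvert$ (via Kato's inequality and the fundamental theorem of calculus) along a path of length at most $r/2$; your preliminary reduction to $\lvert\Phi\rvert<m$ via Remark \ref{rem: subharmonic}, needed to invoke Theorem \ref{thm: e-regularity} at all, is a careful touch the paper leaves implicit. The only discrepancy is constant bookkeeping: keeping the correct $\lvert\mathrm{d}_A\Phi\rvert\leq\sqrt{2e}$ (since $e=\tfrac12(\lvert F_A\rvert^2+\lvert\mathrm{d}_A\Phi\rvert^2)$) makes your error term only $<m\delta/\sqrt{2}$ under the threshold $C_R^{-1}R\delta^2$, so the stated $\varepsilon_{\delta,R}$ must be shrunk by a factor of $2$ to close the argument --- a harmless adjustment, and one the paper itself elides by writing $\lvert\mathrm{d}_A\Phi\rvert\leq e^{1/2}$ in its proof.
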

\begin{proof}
	Fix $q\in\partial\overline{B}_{\frac{r}{4}}(x)$ such that the
restriction of $\lvert\Phi\rvert$ to
$\partial\overline{B}_{\frac{r}{4}}(x)$ attains its maximum at $q$.
For any $p\in B_{\frac{r}{4}}(x)$, let $\gamma_p$ be the concatenation
of a minimizing geodesic from $p$ to $x$ and a minimizing radial
geodesic from $x$ to $q$. Then
\[
    \gamma_p\subset\overline{B}_{\frac{r}{4}}(x),
    \qquad
    L(\gamma_p)\leqslant\frac r2.
\] Thus, using the fundamental theorem of calculus and Kato's inequality we get:
	\begin{equation}\label{eq: est_1}
	\lvert\Phi\rvert(q) - \lvert\Phi\rvert(p)\leqslant\left\vert\int_{\gamma_p}\text{d}\lvert\Phi\rvert\right\vert\leqslant\int_{\gamma_p}\lvert\text{d}_A\Phi\rvert\leqslant {\frac{r}{2}}\sup_{B_{\frac{r}{4}}(x)}\lvert\text{d}_A\Phi\rvert.
	\end{equation} On the other hand, by the $\varepsilon_0$-regularity estimate (Theorem \ref{thm: e-regularity}), the hypothesis $ m^{-1}\mathscr{E}_{B_r(x)}(A,\Phi)<\varepsilon_{\delta,R}\leqslant\varepsilon_0$ implies that
	\begin{equation}\label{eq: est_2}
	\sup_{B_{\frac{r}{4}}(x)}\lvert\text{d}_A\Phi\rvert\leqslant\sqrt{2}\sup_{B_{\frac{r}{4}}(x)}e(A,\Phi)^{\frac{1}{2}}< (2C_{R})^{\frac{1}{2}}r^{-\frac{3}{2}}m^{\frac{1}{2}}\varepsilon_{\delta,R}^{\frac{1}{2}}.
	\end{equation} Putting \eqref{eq: est_1} and \eqref{eq: est_2} together and using the definitions of $r$ and $\varepsilon_{\delta,R}$ along with the lower bound on $\lvert\Phi\rvert(q)=\displaystyle\sup_{\partial\overline{B}_{\frac{r}{4}}(x)}\lvert\Phi\rvert$ gives the statement.
\end{proof}
Combining the above result with the finite energy radius estimate
\eqref{eq:finite-energy-radius-estimate}, we obtain:
\begin{corollary}\label{cor: zeros_in_sigma}
Let $(X^3,g)$ be an oriented AC Riemannian $3$-manifold with one end,
let $E$ be a $G$-bundle over $X$, where $G$ is compact, and let
$\Lambda\in(0,\infty)$. Then there are constants
$R_{\Lambda}>0$ and $\varepsilon_{\Lambda}>0$ such that the following
holds. Let $(A,\Phi)$ be a finite energy YMH configuration of mass
$m>m_0$ satisfying
\[
    m^{-1}\|\textnormal{d}_A\Phi\|_{L^2(X)}^2
    \leqslant\Lambda.
\]
If $r:=R_{\Lambda}m^{-1}\leqslant\rho_0$, then
\begin{equation}\label{eq: zeros_in_sigma}
    m^{-1}\mathscr{E}_{B_r(x)}(A,\Phi)
    <\varepsilon_{\Lambda}
    \quad\Longrightarrow\quad
    \lvert\Phi\rvert>\frac{m}{4}
    \quad\text{on }B_{\frac r4}(x).
\end{equation}
\end{corollary}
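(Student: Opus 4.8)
The plan is to combine Theorem \ref{thm: zeros_in_sigma}, applied with the specific choice $\delta=\tfrac12$, with the boundary lower bound on $|\Phi|$ furnished by the remark following Theorem \ref{thm: taubes_estimate}. With $\delta=\tfrac12$ the conclusion of Theorem \ref{thm: zeros_in_sigma} is exactly $|\Phi|>m/4$ on $B_{r/4}(x)$, which is what we want; the only missing ingredient is the hypothesis $\sup_{\partial\overline{B}_{r/4}(x)}|\Phi|\geq m/2$ of that theorem. Before doing anything else I would dispose of the degenerate case: by Remark \ref{rem: subharmonic}, a finite mass Yang--Mills--Higgs configuration either satisfies $|\Phi|<m$ everywhere or has $|\Phi|\equiv m$, and in the latter case the desired conclusion is immediate, so I may assume $|\Phi|<m$ and thus legitimately invoke the $\varepsilon$-regularity machinery underlying Theorem \ref{thm: zeros_in_sigma}.

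Next I would produce the boundary bound. The remark after Theorem \ref{thm: taubes_estimate} supplies a constant $c>0$, depending only on $g$, such that $\sup_{\partial B_\rho(x)}|\Phi|\geq m\delta$ whenever $\rho>c\Lambda m^{-1}(1-\delta)^{-1}$. Taking $\delta=\tfrac12$ this threshold is $2c\Lambda m^{-1}$, so I would fix $R_\Lambda>8c\Lambda$ and set $\varepsilon_\Lambda:=\varepsilon_{1/2,R_\Lambda}=\min\{C_{R_\Lambda}^{-1}R_\Lambda/4,\varepsilon_0\}$. With $r=R_\Lambda m^{-1}$ the choice $R_\Lambda>8c\Lambda$ gives $r/4>2c\Lambda m^{-1}=c\Lambda m^{-1}(1-\tfrac12)^{-1}$, so applying the remark at radius $r/4$ yields precisely $\sup_{\partial\overline{B}_{r/4}(x)}|\Phi|\geq m/2$. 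With both hypotheses now in hand---this boundary bound together with the assumed energy smallness $m^{-1}\mathscr{E}_{B_r(x)}(A,\Phi)<\varepsilon_\Lambda=\varepsilon_{1/2,R_\Lambda}$---Theorem \ref{thm: zeros_in_sigma} with $\delta=\tfrac12$ and $R=R_\Lambda$ delivers $|\Phi|>m/4$ on $B_{r/4}(x)$.

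The only real point requiring care, and the place I expect the bookkeeping to matter, is the matching of radii: Theorem \ref{thm: zeros_in_sigma} demands the lower bound on the sphere $\partial B_{r/4}(x)$ of one-quarter the radius, whereas Taubes' estimate becomes effective only beyond a threshold of size comparable to $\Lambda/m$. Choosing $R_\Lambda$ a fixed multiple (here $8c$) of $\Lambda$ above that threshold is exactly what forces even the quarter-radius $r/4$ past it, and it is this interplay---rather than any new estimate---that determines the admissible $R_\Lambda$ and hence $\varepsilon_\Lambda$. Finally, the constraint $m>m_*$ in the statement is nothing other than the smallness condition needed to invoke Theorem \ref{thm: taubes_estimate} with $\delta=\tfrac12$, so no further hypotheses are introduced.
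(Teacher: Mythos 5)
Your proposal is correct and follows essentially the same route as the paper: fix $R_\Lambda$ as a suitable multiple of $\Lambda$ so that Taubes' estimate (Theorem \ref{thm: taubes_estimate}, via the maximum-principle remark) with $\delta=\tfrac12$ forces $\sup_{\partial\overline{B}_{r/4}(x)}\lvert\Phi\rvert\geq m/2$, set $\varepsilon_\Lambda=\varepsilon_{1/2,R_\Lambda}$, and then apply Theorem \ref{thm: zeros_in_sigma} with $R=R_\Lambda$. The only cosmetic differences are your explicit dispatch of the degenerate case $\lvert\Phi\rvert\equiv m$ (which the paper leaves implicit) and the precise numerical choice of the constant multiplying $\Lambda$, neither of which changes the argument.
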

\begin{proof}
Let
\[
    R_{\Lambda}:=32C_g\Lambda
    \quad\text{and}\quad
    \varepsilon_{\Lambda}
    :=
    \varepsilon_{1/2,R_{\Lambda}}
    =
    \min\left\{
        8^{-1}C_{R_{\Lambda}}^{-1}R_{\Lambda},
        \varepsilon_0
    \right\}.
\]
By \eqref{eq:finite-energy-radius-estimate}, applied with
$\delta=1/2$, we have
\[
    r_{1/2}(x)
    \leqslant
    4C_g\Lambda m^{-1}
    <
    \frac{R_{\Lambda}}{4m}.
\]
Therefore
\[
    \sup_{\partial\overline B_{\frac r4}(x)}
    \lvert\Phi\rvert
    \geqslant
    \frac m2.
\]
The conclusion now follows from
Theorem~\ref{thm: zeros_in_sigma}, applied with
$\delta=1/2$ and $R=R_{\Lambda}$.
\end{proof}

\section{The blow-up set and the zero set}\label{sec: blow_up_and_zero}

From now on, we consider a sequence
$\lbrace(A_i,\Phi_i)\rbrace_{i\in\mathbb N}
\subseteq\mathcal A(E)\times\Gamma(\mathfrak g_E)$
of YMH critical points on $(X^3,g)$ satisfying the uniform bound
\begin{equation}\label{eq: reasonable?}
    m_i^{-1}\mathscr E_X(A_i,\Phi_i)\leqslant C,
\end{equation}
for some constant $C>0$, and whose masses satisfy
$\limsup_i m_i=\infty$. After passing to a subsequence, we may assume
that $m_i\uparrow\infty$. We note that in case the $(A_i,\Phi_i)$ are $\SU(2)$ monopoles of fixed charge $k\neq 0$, the energy formula \eqref{eq: Energy_Formula} guarantees an a priori uniform bound of the form \eqref{eq: reasonable?} with equality for $C=4\pi k$.

In order to study such a sequence of large mass YMH critical points,
consider the Radon measures
\begin{equation}\label{eq: seq_radon_measures}
\mu_i:=m_i^{-1}e(A_i,\Phi_i)\vol_g.
\end{equation}
By \eqref{eq: reasonable?}, the sequence has uniformly bounded mass.
After passing to a subsequence, still denoted by the same indices, we may
therefore assume that
\[
    \mu_i\rightharpoonup\mu
\]
for some Radon measure $\mu$ on $X$.

For a fixed bounded-geometry radius $\rho_0>0$, set
\begin{equation}\label{eq:general-YMH-sets}
\begin{split}
S_j
&:=
\bigcap_{0<r\leqslant\rho_0}
\left\{
x\in X:
\liminf_{i\to\infty}
m_i^{-1}\mathscr{E}_{B_r(x)}(A_i,\Phi_i)
\geq j^{-1}
\right\},
\\
S
&:=
\bigcup_{j\geq1}S_j,
\qquad
Z
:=
\bigcap_{n\geq1}
\overline{\bigcup_{i\geq n}\Phi_i^{-1}(0)}.
\end{split}
\end{equation}
Thus $S$ and $Z$ agree with the notation introduced in
\eqref{eq:main-concentration-zero-sets}.

For each $x\in X$, let
\[
\mathcal R_x
:=
\left\{
r\in(0,\rho_0]:
\mu(\partial B_r(x))>0
\right\}.
\]
The set $\mathcal R_x$ is at most countable, and for every
$r\in(0,\rho_0]\setminus\mathcal R_x$ one has
\[
    \mu(B_r(x))
    =
    \lim_{i\to\infty}\mu_i(B_r(x)).
\]

Our first result relates the blow-up set $S$ and the zero set $Z$
defined in \eqref{eq:general-YMH-sets}. In the following statement, we
use $\mathcal{H}^0$ to denote the counting measure.
\begin{theorem}\label{thm: main_1}
	Let $(X^3,g)$ be an AC oriented Riemannian $3$-manifold with one end, $E$ be a $G$-bundle over $X$, and $\lbrace (A_i,\Phi_i)\rbrace_{i\in\mathbb{N}}\subseteq\mathcal{A}(E)\times\Gamma (\mathfrak{g}_E)$ be a sequence of YMH critical points on $(X^3,g)$ satisfying the uniform bound \eqref{eq: reasonable?} and whose masses $m_i$ satisfy $\limsup_{i\to\infty} m_i=\infty$. Then, after passing to a subsequence for which $m_i\uparrow\infty$ and
$\mu_i\rightharpoonup\mu$, the following hold:
	\begin{itemize}
		\item[(i)] $\mathcal{H}^0({S}_j)\leqslant jC$, for all $j\in\mathbb{N}$; in particular, each ${S}_j$ is finite and ${S}$ is countable.
		\item[(ii)] The blow-up set contains the zero set:
		\[
		Z\subseteq{S}.
		\]
	\end{itemize}
\end{theorem}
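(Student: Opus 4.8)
The plan is to prove the two assertions separately, with part (i) being a fairly direct measure-theoretic counting argument and part (ii) being the substantive statement that rests on the $\varepsilon$-regularity machinery developed earlier, chiefly Corollary \ref{cor: zeros_in_sigma}.

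For part (i), I would first observe that by the definition of $\mathcal{S}_j$ and the choice of good radii (those $r\notin\mathcal{R}_x$), every point $x\in\mathcal{S}_j$ satisfies $\Theta(x)\geq j^{-1}$. The key fact to exploit is that the limiting measure $\mu$ has total mass bounded by $C$ (from the uniform bound \eqref{eq: reasonable?} passing to the weak limit). If $\mathcal{S}_j$ contained more than $jC$ points, I would select $N>jC$ of them and, using that $\mu$ is locally finite so each point carries a well-defined $0$-dimensional density, choose pairwise disjoint balls $B_{r}(x_\ell)$ around these points with $r\notin\mathcal{R}_{x_\ell}$ and small enough that the balls are disjoint. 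Summing $\mu(B_r(x_\ell))\geq j^{-1}$ over the $N$ disjoint balls would force $\mu(X)\geq N j^{-1}>C$, contradicting the mass bound. Hence $\mathcal{H}^0(\mathcal{S}_j)\leq jC$; finiteness of each $\mathcal{S}_j$ and countability of $\mathcal{S}=\bigcup_j\mathcal{S}_j$ (Lemma \ref{lem: blow_up_set}) follow immediately.

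For part (ii), the strategy is to prove the contrapositive: if $x\notin\mathcal{S}$, then $x\notin Z$, i.e. $x$ has a neighborhood in which the Higgs fields $\Phi_i$ do not vanish for all large $i$. Assume $x\notin\mathcal{S}$, so $\Theta(x)=0$; then I can pick a good radius $r_0\in(0,\rho_0]\setminus\mathcal{R}_x$ with $\mu(B_{r_0}(x))$ as small as desired, and since $\mu_i(B_{r_0}(x))\to\mu(B_{r_0}(x))$ along good radii, I get $m_i^{-1}\mathscr{E}_{B_{r_0}(x)}(A_i,\Phi_i)<\varepsilon_\Lambda$ for all large $i$, where $\Lambda$ is the uniform bound on $m_i^{-1}\|\mathrm{d}_{A_i}\Phi_i\|_{L^2}^2$ coming from \eqref{eq: reasonable?} and $\varepsilon_\Lambda$ is the constant from Corollary \ref{cor: zeros_in_sigma}. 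Now I want to apply Corollary \ref{cor: zeros_in_sigma} on balls of radius $R_\Lambda m_i^{-1}$ centered at points near $x$. The corollary then yields $\lvert\Phi_i\rvert>m_i/4>0$ on a small interior ball, so $\Phi_i$ has no zeros there. By a covering argument—covering a fixed smaller ball $B_{r_0/2}(x)$ by balls of the shrinking radius $R_\Lambda m_i^{-1}$ and checking the energy smallness hypothesis persists on each (using that the total energy in $B_{r_0}(x)$ is small and each small ball inherits this)—I conclude $\Phi_i$ is nonvanishing on all of $B_{r_0/2}(x)$ for $i$ large. This shows $x\notin\overline{\bigcup_{i\geq n}\Phi_i^{-1}(0)}$ for some $n$, hence $x\notin Z$.

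The main obstacle I anticipate is the covering/localization step in part (ii): Corollary \ref{cor: zeros_in_sigma} controls $\Phi_i$ only on a ball of the specific radius $R_\Lambda m_i^{-1}\to 0$, whose energy hypothesis is about that small ball, whereas the smallness I control directly is the total energy in the fixed ball $B_{r_0}(x)$. I must argue that the smallness of $m_i^{-1}\mathscr{E}_{B_{r_0}(x)}$ forces smallness of $m_i^{-1}\mathscr{E}_{B_{R_\Lambda m_i^{-1}}(y)}$ for \emph{every} center $y\in B_{r_0/2}(x)$ simultaneously—this is immediate since $\mathscr{E}$ is monotone under inclusion of the smaller balls into $B_{r_0}(x)$, so each small-ball energy is bounded by the total, giving the uniform smallness needed. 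A secondary technical point is ensuring $R_\Lambda m_i^{-1}\leq\rho_0$ for large $i$ (automatic since $m_i\uparrow\infty$) and that the hypothesis $\sup_{\partial\overline{B}_{r/4}}\lvert\Phi_i\rvert\geq m_i/2$ of Theorem \ref{thm: zeros_in_sigma} is supplied by Taubes' estimate, which is exactly how Corollary \ref{cor: zeros_in_sigma} packages the two results together.
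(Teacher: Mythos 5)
Your proposal is correct and follows essentially the same route as the paper: part (ii) is exactly the paper's argument (small energy on a fixed good-radius ball, monotonicity of energy on the shrinking balls $B_{R_\Lambda m_i^{-1}}(y)\subset B_{r_0}(x)$ for every $y\in B_{r_0/2}(x)$, then Corollary \ref{cor: zeros_in_sigma} to force $\lvert\Phi_i\rvert>m_i/4$ pointwise), including the localization step you flag as the main obstacle. The only cosmetic difference is in (i), where the paper runs a Vitali-type covering with Fatou's lemma directly on the energies $m_i^{-1}\mathscr{E}_{B_{r_l}(x_l)}(A_i,\Phi_i)$ rather than passing to the weak limit $\mu$ and counting disjoint balls of measure at least $j^{-1}$ against the bound $\mu(X)\leq C$; both are the same counting idea and both are valid.
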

\begin{proof}
(i) Given $0<r\leqslant\rho_0$, we can find a countable open covering $\{B_{5r_l}(x_l)\}$ of ${S}_j$ with $x_l\in{S}_j$, $10r_l<r$ and $B_{r_l}(x_l)$ pairwise disjoint. Then
\begin{eqnarray*}
	\sum_l (5r_l)^0 &\leqslant& j\sum_l\liminf_{i\to\infty} m_i^{-1}\mathscr{E}_{B_{r_l}(x_l)}(A_i,\Phi_i)\quad\text{($x_l\in{S}_j$)}\\
	&\leqslant& j\liminf_{i\to\infty} m_i^{-1}\sum_l\mathscr{E}_{B_{r_l}(x_l)}(A_i,\Phi_i)\quad\text{(by Fatou's lemma)}\\
	&\leqslant& j\liminf_{i\to\infty} m_i^{-1}\mathscr{E}_{X}(A_i,\Phi_i)\quad\text{(the $B_{r_l}(x_l)$'s are disjoint)}\\
	&\leqslant& jC.\quad\text{(by \eqref{eq: reasonable?})}
\end{eqnarray*}
Since this bound is uniform in $r\in (0,\rho_0]$, it follows that $\mathcal{H}^0({S}_j)\leqslant jC$.

(ii) We shall apply Corollary \ref{cor: zeros_in_sigma} with $\Lambda:=2C$. Let $x_0\in X\setminus S$. Choose $j\in\mathbb N$ so large that
$j^{-1}<\varepsilon_\Lambda$. Since $x_0\notin S_j$, there is
$r_0\in(0,\rho_0]$ such that
\[
\liminf_{i\to\infty}
m_i^{-1}\mathscr{E}_{B_{r_0}(x_0)}(A_i,\Phi_i)
<j^{-1}<\varepsilon_\Lambda.
\]
Shrinking $r_0$ if necessary, we may assume
$r_0\notin\mathcal R_{x_0}$; the preceding inequality is preserved by
monotonicity with respect to the radius. In particular, it follows that there is $i_0\in\mathbb{N}$ such that
\[
m_{i}^{-1}\mathscr{E}_{B_{r_0}(x_0)}(A_{i},\Phi_{i})<\varepsilon_{\Lambda},\quad\forall i\geq i_0.
\] Since $m_i\uparrow\infty$, by increasing $i_0$ if necessary we may also assume that
\[
m_i>m_0
\quad\text{and}\quad
r_i:=R_{\Lambda}m_i^{-1}<\frac{r_0}{2},\quad\forall i\geq i_0.
\] Hence, given any $x\in B_{r_0/2}(x_0)$, it follows that
\[
m_{i}^{-1}\mathscr{E}_{B_{r_i}(x)}(A_{i},\Phi_{i})<\varepsilon_{\Lambda},\quad\forall i\geq i_0,
\] so that applying Corollary \ref{cor: zeros_in_sigma} we get that
\[
	\lvert\Phi_i\rvert(x)>\frac{m_i}{4},\quad\forall i\geq i_0.
\] Therefore, 
\[
	\inf_{B_{\frac{r_0}{2}}(x_0)}\lvert\Phi_i\rvert\geq\frac{m_i}{4}>0,\quad\forall i\geq i_0.
\] In particular, it follows that $x_0\in X\setminus Z$. By the arbitrariness of $x_0\in X\setminus{S}$, this shows that $Z\subset{S}$.
\end{proof}

\section{Bubbling}
\label{sec: bubbling}

\noindent\textbf{Corrected and expanded in version~5.}
The fixed centre rescaling of the published version is replaced by moving
centre arguments. For rank one Yang--Mills--Higgs critical points, a
scale selection shows that the concentration radius is comparable to the
inverse mass. For monopoles, the energy and degree identities give a
complete finite cluster decomposition and show that no mass-renormalized energy remains in the intervening regions.
\medskip

\subsection{Rank one Yang--Mills--Higgs critical points}
\label{sec:general-YMH-bubbling}

We now give the revised proof of
Theorem~\ref{thm: Main_YMH}\textup{(a')} for
$G=\SU(2)$ or $G=\SO(3)$. The point is not to prescribe the centre and
the scale in advance. Instead, one first selects a ball carrying a fixed
amount of mass-renormalized energy and then proves that its radius is
comparable
to the inverse mass. We use the rescaled functional and
$\varepsilon$-notation employed in \cite{cheng2025su2}, specialized to
the zero-potential case.

Set
\[
    \eps_i:=m_i^{-1},
    \qquad
    \phi_i:=m_i^{-1}\Phi_i,
\]
and, for $U\subset X$, introduce the scaled functional
\begin{equation}\label{eq:scaled-general-YMH-functional}
    \mathcal Y_{\eps_i,U}(A_i,\phi_i)
    :=
    \frac12\int_U
    \left(
       \eps_i^2\abs{F_{A_i}}^2
       +
       \abs{\text{d}_{A_i}\phi_i}^2
    \right)\vol_g.
\end{equation}
Then
\begin{equation}\label{eq:scaled-general-YMH-energy}
    \eps_i^{-1}\mathcal Y_{\eps_i,U}(A_i,\phi_i)
    =
    m_i^{-1}\mathscr E_U(A_i,\Phi_i),
\end{equation}
and the critical-point equations become
\begin{equation}\label{eq:scaled-general-YMH-equations}
    \eps_i^2\text{d}_{A_i}^*F_{A_i}
    =
    [\text{d}_{A_i}\phi_i,\phi_i],
    \qquad
    \Delta_{A_i}\phi_i=0.
\end{equation}
The maximum principle gives $\abs{\phi_i}\leqslant1$.

If $p_i\in X$ and $t_i\downarrow0$, let
\[
    s_i(z):=\exp_{p_i}(t_i z),
    \qquad
    \widetilde g_i:=t_i^{-2}s_i^*g,
    \qquad
    \widetilde\eps_i:=\frac{\eps_i}{t_i},
\]
and set
\[
    (\widetilde A_i,\widetilde\phi_i):=s_i^*(A_i,\phi_i).
\]
The pair $(\widetilde A_i,\widetilde\phi_i)$ is a critical point of
$\mathcal Y_{\widetilde\eps_i}$ with respect to $\widetilde g_i$, and
\begin{equation}\label{eq:general-YMH-rescaling-identity}
    \widetilde\eps_i^{-1}
    \mathcal Y_{\widetilde\eps_i,B_R(0),\widetilde g_i}
    (\widetilde A_i,\widetilde\phi_i)
    =
    \eps_i^{-1}
    \mathcal Y_{\eps_i,B_{Rt_i}(p_i),g}(A_i,\phi_i).
\end{equation}

We require two consequences of the refined analytic estimates and the
corrected regularity proof uniform in the mass.  The second uses the rank
one fact that the centralizer of a nonzero Higgs field is one-dimensional.

\begin{lemma}[Coarse mass scale bound]
\label{lem:general-YMH-coarse-density}
There is $m_0>0$, depending only on the bounded geometry of $(X,g)$,
such that for every $\Lambda<\infty$ there is $C_\Lambda<\infty$ with
the following property.  Every YMH critical point of mass $m\geqslant m_0$ satisfying
\[
    m^{-1}\mathscr E_X(A,\Phi)\leqslant\Lambda
\]
also satisfies
\begin{equation}\label{eq:general-YMH-coarse-density}
    \sup_X e(A,\Phi)\leqslant C_\Lambda m^4.
\end{equation}
\end{lemma}

\begin{proof}
Rescale the metric by $g_m=m^2g$ and the Higgs field by
$\phi=m^{-1}\Phi$. The rescaled configuration has mass parameter $1$, in particular $|\phi|\leqslant 1$ on $X$, and has total energy bounded by $\Lambda$. We spell out the
point at which the total energy bound enters. On a unit ball $B_1(x_0)$
for the rescaled metric, maximize
\[
 \theta(y)=\left(\frac12-d(x_0,y)\right)^3e(y)
\]
on $\overline{B}_{1/2}(x_0)$, as in the proof of
Theorem~\ref{thm: e-regularity}. Let $y_*$ be a
maximum point, write $e_*:=e(y_*)$, and set
\[
 s_*:=\frac12\left(\frac12-d(x_0,y_*)\right).
\]
The refined coercive Bochner inequality and the
local mean-value inequality give
\[
 e_*\leqslant C\left(s^{-3}\Lambda+s^2(e_*+e_*^{3/2})\right),
 \qquad 0<s\leqslant\min\{s_*,r_1\}.
\]
If \(e_*\leqslant1\), take \(s=\min\{s_*,s_1\}\), with \(s_1\) fixed and
small.  If \(e_*>1\), take
\(s=\min\{s_*,c e_*^{-1/4}\}\).  Absorbing the last term gives either a
bound for the weighted maximum \(\theta\), or
\[
 e_*\leqslant C\Lambda e_*^{3/4},
\]
and hence \(e_*\leqslant C(1+\Lambda^4)\).  Applying this argument on unit
balls centred at arbitrary points gives
\(\sup e\leqslant C_\Lambda\) in the rescaled metric.  This is the
large energy version of the Heinz argument; no smallness of \(\Lambda\)
is used.  Since an energy density in dimension three scales by \(m^4\),
scaling back gives \eqref{eq:general-YMH-coarse-density}.
\end{proof}

\begin{lemma}[The small parameter limit in rank one]
\label{lem:general-YMH-zero-parameter}
Let \(G=\SU(2)\) or \(G=\SO(3)\).  Let
\(\widetilde g_i\to g_{\mathbb R^3}\) smoothly on compact subsets of
\(\mathbb R^3\), let \(\widetilde\eps_i\to0\), and let
\((\widetilde A_i,\widetilde\phi_i)\) be critical points of
\(\mathcal Y_{\widetilde\eps_i}\) on an exhaustion of \(\mathbb R^3\).
Assume that they are restrictions of mass one configurations on the
corresponding complete rescaled manifolds and that
\[
 \sup_i\widetilde\eps_i^{-1}
 \mathcal Y_{\widetilde\eps_i}
 (\widetilde A_i,\widetilde\phi_i)<\infty.
\]
There is a constant \(\eta_{\mathrm{ad}}>0\), depending only on the
uniform bounded-geometry data and the global mass-renormalized energy bound,
with the following property.  If
\[
    \sup_{y\in B_R(0)}
    \widetilde\eps_i^{-1}
    \mathcal Y_{\widetilde\eps_i,B_1(y),\widetilde g_i}
       (\widetilde A_i,\widetilde\phi_i)
    \leqslant\eta_{\mathrm{ad}}
\]
for every fixed \(R\) and all sufficiently large \(i\), then, after
passing to a subsequence, there are harmonic one-forms
\(h_F,h_\phi\) on \(\mathbb R^3\) such that
\begin{equation}\label{eq:general-YMH-harmonic-density-limit}
    \widetilde\eps_i^{-1}
    e_{\widetilde\eps_i}
       (\widetilde A_i,\widetilde\phi_i)
    \longrightarrow
    \frac12\left(\abs{h_F}^2+\abs{h_\phi}^2\right)
\end{equation}
smoothly on compact subsets, where
\[
    e_{\eps}(A,\phi)
    :=
    \frac12\left(\eps^2\abs{F_A}^2+
                         \abs{\text{d}_A\phi}^2\right).
\]
\end{lemma}

\begin{proof}
Put
\[
 B_i:=\widetilde\eps_i^{1/2}
       *_{\widetilde g_i}F_{\widetilde A_i},
 \qquad
 C_i:=\widetilde\eps_i^{-1/2}
       d_{\widetilde A_i}\widetilde\phi_i.
\]
Then
\[
 \widetilde\eps_i^{-1}e_{\widetilde\eps_i}
 (\widetilde A_i,\widetilde\phi_i)
 =\frac12\left(|B_i|^2+|C_i|^2\right).
\]
We first establish both a pointwise estimate at the
\(\widetilde\eps_i\)-scale and a lower bound for the mass-normalized Higgs field. Decrease \(\eta_{\mathrm{ad}}\), if necessary, so that it is below
the threshold in Theorem~\ref{thm: e-regularity}. Fix compact sets
\(K\Subset K^+\Subset\mathbb R^3\) and a number \(R_0>4\). Apply that
theorem to the ordinary YMH configuration
\[
 \left(
    \widetilde A_i,
    \widetilde\eps_i^{-1}\widetilde\phi_i
 \right)
\]
on balls of radius \(R_0\widetilde\eps_i\), with the fixed parameter
\(R=R_0\). Since these balls are contained in the corresponding unit
balls for all large \(i\), the local hypothesis gives
\[
 \widetilde\eps_i
 \mathscr E_{B_{R_0\widetilde\eps_i}(y)}
 \left(
    \widetilde A_i,
    \widetilde\eps_i^{-1}\widetilde\phi_i
 \right)
 =
 \widetilde\eps_i^{-1}
 \mathcal Y_{\widetilde\eps_i,
 B_{R_0\widetilde\eps_i}(y)}
 (\widetilde A_i,\widetilde\phi_i)
 \leqslant\eta_{\mathrm{ad}}.
\]
Consequently, uniformly on \(K^+\),
\begin{equation}\label{eq:rank-one-relative-density-smallness}
 \widetilde\eps_i^4
 \left(
   |F_{\widetilde A_i}|^2+
   \widetilde\eps_i^{-2}
   |d_{\widetilde A_i}\widetilde\phi_i|^2
 \right)
 \leqslant C\eta_{\mathrm{ad}}.
\end{equation}

Let \(G_i\) be the minimal positive Green function on the corresponding
complete rescaled manifold and set
\[
 d\nu_i
 :=
 \widetilde\eps_i^{-1}
 |d_{\widetilde A_i}\widetilde\phi_i|^2
 \,\vol_{\widetilde g_i}.
\]
The Green representation gives
\[
 1-|\widetilde\phi_i(y)|^2
 =2\widetilde\eps_i\int G_i(y,z)\,d\nu_i(z).
\]
For \(y\in K\), the contribution of
\(B_{\widetilde\eps_i}(y)\) is at most
\(C\eta_{\mathrm{ad}}\) by
\eqref{eq:rank-one-relative-density-smallness} and the local bound
\(G_i(y,z)\leqslant C d_{\widetilde g_i}(y,z)^{-1}\). On each dyadic
annulus with radii between \(2^j\widetilde\eps_i\) and
\(2^{j+1}\widetilde\eps_i\), up to radius one, one has
\(\widetilde\eps_iG_i\leqslant C2^{-j}\), while
\(\nu_i(B_1(y))\leqslant2\eta_{\mathrm{ad}}\). Summing the annuli gives
another contribution bounded by \(C\eta_{\mathrm{ad}}\). Finally, the
complement of \(B_1(y)\) contributes \(o(1)\), by the global mass-renormalized energy bound. Thus
\[
 \sup_K\left(1-|\widetilde\phi_i|^2\right)
 \leqslant C\eta_{\mathrm{ad}}+o(1).
\]
Choosing \(\eta_{\mathrm{ad}}\) sufficiently small, we obtain, for all
large \(i\),
\begin{equation}\label{eq:rank-one-Higgs-lower-bound}
    |\widetilde\phi_i|\geqslant\frac12
    \qquad\text{on }K.
\end{equation}

Write
\[
    n_i:=\frac{\widetilde\phi_i}{|\widetilde\phi_i|}.
\]
For \(G=\SU(2)\) or \(G=\SO(3)\), the orthogonal complement of
\(\mathbb R n_i\) is the image of \(\operatorname{ad}n_i\), and there is
a uniform constant \(c>0\) such that
\begin{equation}\label{eq:rank-one-commutator-coercivity}
    |[\widetilde\phi_i,\xi]|
    \geqslant c|\xi^\perp|
\end{equation}
whenever \(|\widetilde\phi_i|\geqslant1/2\).  Apply
Lemma~\ref{lem: Bochner_estimate} to the ordinary
YMH configuration
\[
    \left(
       \widetilde A_i,
       \widetilde\eps_i^{-1}\widetilde\phi_i
    \right),
\]
whose mass is \(\widetilde\eps_i^{-1}\). Fix relatively compact smooth
domains
\[
    U\Subset U'\Subset U''\Subset\mathbb R^3.
\]
Applying \eqref{eq:rank-one-relative-density-smallness} with a compact
set containing \(U''\), and decreasing \(\eta_{\mathrm{ad}}\) if
necessary, the
curvature terms in the local Bochner inequalities for
\[
 \left[
 F_{\widetilde A_i},
 \widetilde\eps_i^{-1}\widetilde\phi_i
 \right],
 \qquad
 \left[
 d_{\widetilde A_i}
   (\widetilde\eps_i^{-1}\widetilde\phi_i),
 \widetilde\eps_i^{-1}\widetilde\phi_i
 \right]
\]
are absorbed by the coercive zeroth-order terms furnished by
\eqref{eq:rank-one-Higgs-lower-bound} and
\eqref{eq:rank-one-commutator-coercivity}.  The resulting inequalities
have the form
\[
    \Delta |\Xi_i|^2
    \leqslant
    -c\widetilde\eps_i^{-2}|\Xi_i|^2
    \qquad\text{on }U'',
\]
for the two transverse commutators \(\Xi_i\).  The coarse density bound
gives a polynomial bound on \(\partial U''\).  Comparison with
\(\exp(-c\,d(\,\cdot\,,\partial U'')/\widetilde\eps_i)\), followed by
interior estimates for the differentiated equations, therefore gives,
for every \(k\geqslant0\),
\begin{equation}\label{eq:rank-one-adiabatic-estimates}
 \widetilde\eps_i^{-1}
 \left(
 \|[B_i,\widetilde\phi_i]\|_{C^k(U')}
 +\|[C_i,\widetilde\phi_i]\|_{C^k(U')}
 \right)
 \longrightarrow0.
\end{equation}
This is the local version of the large-Higgs estimates in
\cite{fadel2023asymptotics}*{Lemma~4.8 and Theorem~4.11}; the argument is
pointwise and uses only bounded geometry, the lower bound for the Higgs
field, and \eqref{eq:rank-one-relative-density-smallness}.

Decompose
\[
    B_i=b_i n_i+B_i^\perp,
    \qquad
    C_i=c_i n_i+C_i^\perp,
\]
where \(b_i\) and \(c_i\) are ordinary real one-forms.  By
\eqref{eq:rank-one-commutator-coercivity} and
\eqref{eq:rank-one-adiabatic-estimates}, the transverse components
\(B_i^\perp\) and \(C_i^\perp\) tend to zero smoothly on compact
subsets.  Moreover,
\[
    d_{\widetilde A_i}n_i
    =|\widetilde\phi_i|^{-1}
      \bigl(d_{\widetilde A_i}\widetilde\phi_i\bigr)^\perp
    \longrightarrow0
\]
smoothly locally.  The Bianchi identity and the critical-point equations
give
\[
 d_{\widetilde A_i}^{*}B_i=0,
 \qquad
 d_{\widetilde A_i}^{*}C_i=0,
\]
and express \(d_{\widetilde A_i}B_i\) and
\(d_{\widetilde A_i}C_i\) as fixed Hodge-star multiples of
\[
 \widetilde\eps_i^{-1}[C_i,\widetilde\phi_i]
 \quad\text{and}\quad
 \widetilde\eps_i^{-1}[B_i,\widetilde\phi_i],
\]
respectively.  Projecting onto \(n_i\), and using
\eqref{eq:rank-one-adiabatic-estimates} together with
\(d_{\widetilde A_i}n_i\to0\), gives
\[
    db_i\to0,
    \qquad d^*b_i\to0,
    \qquad dc_i\to0,
    \qquad d^*c_i\to0
\]
smoothly on compact subsets.

The global mass-renormalized energy bound gives uniform local \(L^2\) bounds
for \(b_i\) and \(c_i\).  Interior estimates for the Hodge operator
\(d+d^*\) therefore yield uniform \(W^{1,2}\) bounds on smaller compact
sets.  Applying the same estimates after differentiating the preceding
equations gives uniform bounds in every Sobolev norm.  Hence, after
passing to a diagonal subsequence, \(b_i\to h_F\) and
\(c_i\to h_\phi\) smoothly locally, where \(h_F\) and \(h_\phi\) are
closed and coclosed one-forms.  Since the transverse components vanish,
the convergence of the energy densities is precisely
\eqref{eq:general-YMH-harmonic-density-limit}.
\end{proof}

\begin{proposition}[Selection of the moving centre and the mass scale]
\label{prop:general-YMH-moving-centre}
Fix $x\in S$. After passing to a subsequence, there are $p_i\to x$,
$t_i\downarrow0$, and constants $0<c_x\leqslant C_x<\infty$ such that
\begin{equation}\label{eq:general-YMH-comparable-scales}
    c_x\leqslant m_it_i\leqslant C_x.
\end{equation}
Moreover, after changing gauge, a subsequence of
$(\widetilde A_i,\widetilde\phi_i)$ converges smoothly on compact
subsets of $\mathbb R^3$ to a nontrivial critical point
$(A_x,\phi_x)$ of $\mathcal Y_\alpha$, where
\[
    \alpha
    =
    \lim_{i\to\infty}\frac{\eps_i}{t_i}
    \in(0,\infty).
\]
The pair $(A_x,\alpha^{-1}\phi_x)$ is a Euclidean YMH critical point of mass $\alpha^{-1}$. Applying the standard Euclidean
Yang--Mills--Higgs scaling then yields a nontrivial mass one Euclidean YMH critical point.
\end{proposition}

\begin{proof}
Choose $j$ with $x\in S_j$, and fix
\[
    0<\eta<
    \min\left\{\frac1{4j},\eta_{\mathrm{ad}}\right\}.
\]
Choose numbers \(R_i\uparrow\infty\). We use the concentration-radius
and moving centre selection underlying the bubbling analysis in
\cite{cheng2025su2}*{Sections~5.5--5.7}, with the modifications required
by the absence of the Higgs potential. Applied on a diagonal sequence of
nested balls about \(x\), it gives radii \(\rho_i\downarrow0\), centres
\(p_i\to x\), and scales \(t_i\downarrow0\) such that
\begin{equation}\label{eq:general-YMH-selected-energy}
    \eps_i^{-1}
    \mathcal Y_{\eps_i,B_{t_i}(p_i)}(A_i,\phi_i)
    =
    \eta,
\end{equation}
\[
    \frac{\rho_i}{t_i}\geqslant R_i\longrightarrow\infty,
\]
and, for every fixed \(R<\infty\),
\begin{equation}\label{eq:general-YMH-selected-smallness}
 \sup_{y\in B_R(0)}
 \eps_i^{-1}
 \mathcal Y_{\eps_i,
 B_{t_i}(\exp_{p_i}(t_i y))}(A_i,\phi_i)
 \leqslant\eta
\end{equation}
for all sufficiently large \(i\).  We briefly recall the selection.  By
the definition of \(S_j\), one first chooses \(\rho_i\) so that a ball
of radius \(\rho_i/R_i\) about \(x\) carries mass-renormalized energy at least
\(2\eta\).  One then takes the smallest radius at which a ball in a
slightly larger nested neighbourhood carries energy \(\eta\), and
chooses its centre.  Repeating the choice with an increasing number of
nested neighbourhoods and passing to a diagonal subsequence gives
\eqref{eq:general-YMH-selected-smallness}; the unused neighbourhood
between successive balls provides the required boundary margin.  This is
the standard moving centre selection in the bubbling argument.  In
particular, the \(t_i\)-rescaled domains exhaust \(\mathbb R^3\).

By Lemma~\ref{lem:general-YMH-coarse-density},
\[
    \eta
    \leqslant
    C_\Lambda m_i^3t_i^3,
\]
and hence $m_it_i\geqslant c_x>0$.

Suppose, towards a contradiction, that $m_it_i\to\infty$ along a
subsequence. Then $\widetilde\eps_i=\eps_i/t_i\to0$. Rescale by $t_i$ as
in \eqref{eq:general-YMH-rescaling-identity}.  The bound
\eqref{eq:general-YMH-selected-smallness} gives the small energy hypothesis of
Lemma~\ref{lem:general-YMH-zero-parameter} on every fixed ball, with
equality on $B_1(0)$.  These rescalings have mass one, and their global
mass-renormalized energies equal
\(m_i^{-1}\mathscr E_X(A_i,\Phi_i)\), so the remaining hypotheses of
that lemma also hold.  We consequently obtain harmonic real $1$-forms
\(h_F,h_\phi\) on $\mathbb R^3$ satisfying
\[
    \frac12\int_{B_1(0)}
    \left(\abs{h_F}^2+\abs{h_\phi}^2\right)=\eta>0.
\]
On the other hand, the global mass-renormalized energy bound and
\eqref{eq:general-YMH-rescaling-identity} give
\[
    \frac12\int_{\mathbb R^3}
    \left(\abs{h_F}^2+\abs{h_\phi}^2\right)
    \leqslant
    \liminf_{i\to\infty}
    m_i^{-1}\mathscr E_X(A_i,\Phi_i)
    \leqslant C.
\]
The forms \(h_F\) and \(h_\phi\) are therefore $L^2$ harmonic
$1$-forms on $\mathbb R^3$, and hence vanish. This contradicts the positive energy on $B_1(0)$. Therefore
$m_it_i\leqslant C_x$, proving
\eqref{eq:general-YMH-comparable-scales}.

After extracting once more, $\widetilde\eps_i\to\alpha\in(0,\infty)$.
The corrected small energy estimate, elliptic regularity in local Coulomb
gauges, and \eqref{eq:general-YMH-selected-smallness} give smooth convergence on
compact subsets to a critical point
$(A_x,\phi_x)$ of $\mathcal Y_\alpha$. Equation
\eqref{eq:general-YMH-selected-energy} passes to the limit and yields
\[
    \alpha^{-1}
    \mathcal Y_{\alpha,B_1(0)}(A_x,\phi_x)
    =
    \eta,
\]
so the limit is nontrivial. The global mass-renormalized energy bound gives
finite total action.

It remains to identify its mass. Fix $\delta\in(0,1)$. Since
\[
    m_i^{-1}\|\textnormal{d}_{A_i}\Phi_i\|_{L^2(X)}^2
    \leqslant
    2m_i^{-1}\mathscr E_X(A_i,\Phi_i)
    \leqslant
    2C,
\]
the finite energy radius estimate
\eqref{eq:finite-energy-radius-estimate}, applied with centre $p_i$,
gives a constant $R_\delta<\infty$ (e.g. take $R_\delta>2C_gC(1-\delta)^{-2}$), independent of $i$, such that
\[
    \sup_{\partial B_{R_\delta m_i^{-1}}(p_i)}
       \abs{\Phi_i}
    \geqslant
    \delta m_i.
\]
In the $t_i$-rescaled coordinates this sphere has radius
$R_\delta/(m_it_i)$, which stays in a fixed compact annulus by
\eqref{eq:general-YMH-comparable-scales}. After passing to a further
subsequence, the corresponding radii converge to a positive finite radius.
Smooth convergence therefore gives a point at which
$\abs{\phi_x}\geqslant\delta$.  The pair
$(A_x,\alpha^{-1}\phi_x)$ is an ordinary
YMH critical point on $\mathbb R^3$. By
\cite{fadel2023asymptotics}*{Corollary~4.5}, the pair has a
well-defined mass $\alpha^{-1}\overline m_x\geqslant0$ and
$\abs{\phi_x}\to\overline m_x$ uniformly at infinity.  Since
$\abs{\phi_x}$ is subharmonic, its supremum is at most
$\overline m_x$, and hence $\overline m_x\geqslant\delta$.  The bound
$\abs{\phi_i}\leqslant1$ passes to the limit, so
$\overline m_x\leqslant1$.  Letting $\delta\uparrow1$ gives
$\overline m_x=1$.  Thus the pair
$(A_x,\alpha^{-1}\phi_x)$ has mass $\alpha^{-1}$, and the standard
Euclidean Yang--Mills--Higgs scaling normalizes its mass to one,
completing the proof.
\end{proof}

\medskip
\noindent\textbf{Additional observation in version~5.}
\begin{remark}[Charged bubbles and Higgs zeros]
\label{rem:charged-bubbles-higgs-zeros}
Assume that $G=\SU(2)$ or $G=\SO(3)$, let $x\in S$, and use the
notation of the proof of Proposition~\ref{prop:general-YMH-moving-centre}.
Suppose that the resulting Euclidean Yang--Mills--Higgs bubble has
nonzero magnetic charge. Then the centres may be replaced by zeros of
the Higgs fields. More precisely, there are points
\[
    z_i\in\Phi_i^{-1}(0),
    \qquad
    z_i\longrightarrow x,
    \qquad
    d(z_i,p_i)=O(t_i),
\]
such that recentering at $z_i$ and rescaling by $t_i$ produces a
translate of the same nontrivial Euclidean bubble, with its Higgs field
vanishing at the origin.

Indeed, before the final fixed Euclidean normalization, write the smooth
limit obtained in the proof as $(A_x,\phi_x)$. Nonzero magnetic charge
means that, for every sufficiently large $R$, the normalized Higgs field
on $\partial B_R(0)$ represents a nontrivial class in
$\pi_2(S^2)$. Equivalently, its degree is nonzero; with the usual
normalization for $\SO(3)$, the numerical magnetic charge may differ
from this degree by a factor of $1/2$. In the $t_i$-rescaled normal
coordinates centred at $p_i$, set
\[
    \widehat\phi_i(y)
    :=m_i^{-1}\Phi_i\bigl(\exp_{p_i}(t_i y)\bigr).
\]
After choosing the gauges used in the bubbling argument,
$\widehat\phi_i\to\phi_x$ smoothly on $\overline{B_R(0)}$. Hence, for
all sufficiently large $i$, $\widehat\phi_i$ is nonzero on
$\partial B_R(0)$ and its normalization there has the same nonzero
topological degree as $\phi_x/\abs{\phi_x}$. If
$\widehat\phi_i$ were nonvanishing on $B_R(0)$, this normalized boundary
map would extend over the ball, which is impossible. Thus there is
$y_i\in B_R(0)$ such that $\widehat\phi_i(y_i)=0$. Setting
\[
    z_i:=\exp_{p_i}(t_i y_i)
\]
gives the asserted zeros. After passing to a subsequence, $y_i$ converges,
and recentering at $z_i$ therefore translates the limiting bubble so that
its Higgs field vanishes at the origin.

Consequently, if $S_{\mathrm{ch}}\subset S$ denotes the set of points
which admit a bubble of nonzero magnetic charge, then
\[
    S_{\mathrm{ch}}\subset Z.
\]
This argument does not apply to zero-charge bubbles, and therefore does
not by itself prove the reverse inclusion $S\subset Z$ for general rank
one Yang--Mills--Higgs critical points.
\end{remark}

\begin{proof}[Completion of the proof of Theorem~\ref{thm: Main_YMH}]
Part~\textup{(a')} is Proposition~\ref{prop:general-YMH-moving-centre}.
Parts~\textup{(b')} and~\textup{(c')} were proved in
Theorem~\ref{thm: main_1}.
\end{proof}

\begin{remark}[Why the rank one restriction is needed]
\label{rem:higher-rank-obstruction}
In higher rank, the scale of a nonabelian component need not be determined
by the norm of the full Higgs field.  To see this, equip
\(\mathfrak{su}(3)\) with the invariant inner product
\(\langle a,b\rangle=-2\operatorname{tr}(ab)\), whose restriction to
the upper-block copy of \(\mathfrak{su}(2)\) is the normalization used
above.  Set
\[
    H:=\frac{1}{\sqrt{12}}\operatorname{diag}(i,i,-2i)
    \in\mathfrak{su}(3).
\]
Then \(H\) is a unit element, orthogonal to the upper-block copy of
\(\mathfrak{su}(2)\), and commutes with it.

Let \(q_i\in\mathbb N\) satisfy \(q_i\to\infty\).  Using the classical
construction of well-separated Euclidean multimonopoles, choose mass one
\(\SU(2)\) monopoles \((A_i^0,\Psi_i^0)\) of charge \(q_i\).  The
constituent separation may be chosen, depending on \(q_i\), so large that
\begin{equation}\label{eq:higher-rank-uniform-density}
    \sup_{\mathbb R^3}e(A_i^0,\Psi_i^0)\leqslant C
\end{equation}
for a constant independent of \(i\).  Indeed, in the well-separated
construction the solution is arbitrarily close, near each constituent,
to a charge one BPS monopole and has arbitrarily small energy density
outside the constituent balls.  By increasing the separation as \(q_i\)
grows, the errors can be made uniform; see \cite{Jaffe1980}.  Choose
\(R_i\geqslant1\) such that
\[
    \mathscr E_{\mathbb R^3\setminus B_{R_i}}
       (A_i^0,\Psi_i^0)\leqslant1,
\]
and set
\[
    \widehat m_i:=q_iR_i.
\]
After applying the standard monopole scaling, we obtain an embedded
\(\SU(2)\) monopole \((A_i,\Psi_i)\) of mass \(\widehat m_i\) and charge
\(q_i\).  Through the upper-block inclusion
\(\SU(2)\hookrightarrow\SU(3)\), extend \((A_i,\Psi_i)\) to the
trivial \(\SU(3)\)-bundle over \(\mathbb R^3\), retaining the same
notation.  Thus \(A_i\) and \(\Psi_i\) take values in the embedded
copy of \(\mathfrak{su}(2)\).  Define
\[
    \Phi_i:=\Psi_i+\widehat m_iq_iH.
\]

Since \(H\) commutes with the embedded \(\mathfrak{su}(2)\), it is
parallel with respect to \(A_i\).  Therefore
\[
    \text{d}_{A_i}\Phi_i=\text{d}_{A_i}\Psi_i,
\]
and hence \((A_i,\Phi_i)\) is again a monopole.  Moreover, \(H\) is
orthogonal to \(\Psi_i\), so
\[
    |\Phi_i|^2
    =
    |\Psi_i|^2+\widehat m_i^2q_i^2.
\]
It follows that the mass of the \(\SU(3)\) configuration is
\[
    m_i
    =
    \widehat m_i\sqrt{1+q_i^2}.
\]
Its energy is unchanged by the addition of the parallel Higgs component:
\begin{equation}\label{eq:higher-rank-example-energy}
\begin{split}
    \mathscr E_{\mathbb R^3}(A_i,\Phi_i)
    &=
    \mathscr E_{\mathbb R^3}(A_i,\Psi_i)
    \\
    &=
    4\pi\widehat m_iq_i
    =
    4\pi m_i\frac{q_i}{\sqrt{1+q_i^2}}.
\end{split}
\end{equation}
In particular,
\[
    m_i^{-1}\mathscr E_{\mathbb R^3}(A_i,\Phi_i)
    \longrightarrow4\pi.
\]
Here \(q_i\) denotes the charge of the embedded \(\SU(2)\) monopole; no
charge for the full \(\SU(3)\) configuration is being used.

Under the scaling from mass one to mass \(\widehat m_i\),
\[
    \mathscr E_{\mathbb R^3\setminus B_{R_i/\widehat m_i}}
       (A_i,\Psi_i)
    =
    \widehat m_i
    \mathscr E_{\mathbb R^3\setminus B_{R_i}}
       (A_i^0,\Psi_i^0)
    \leqslant\widehat m_i.
\]
Since
\[
    \frac{\widehat m_i}{m_i}
    =
    \frac{1}{\sqrt{1+q_i^2}}
    \longrightarrow0
\]
and
\[
    \frac{R_i}{\widehat m_i}
    =
    \frac1{q_i}
    \longrightarrow0,
\]
we obtain
\[
    m_i^{-1}e(A_i,\Phi_i)\,\vol_{g_E}
    \rightharpoonup
    4\pi\delta_0.
\]
Thus the mass-renormalized energy concentrates at the origin.

On the other hand, the nonabelian component varies at the scale
\(\widehat m_i^{-1}\), whereas
\[
    \frac{m_i}{\widehat m_i}
    =
    \sqrt{1+q_i^2}
    \longrightarrow\infty.
\]
Indeed, after rescaling by \(m_i\), the bound
\eqref{eq:higher-rank-uniform-density} and the monopole scaling give
\[
    e(\widehat A_i,\widehat\Phi_i)
    \leqslant
    C\left(\frac{\widehat m_i}{m_i}\right)^4
    =
    \frac{C}{(1+q_i^2)^2}
    \longrightarrow0
\]
locally uniformly, independently of the choice of moving centres.
Furthermore,
\[
    \frac{\widehat m_iq_i}{m_i}
    =
    \frac{q_i}{\sqrt{1+q_i^2}}
    \longrightarrow1,
    \qquad
    \frac{\widehat m_i}{m_i}
    \longrightarrow0.
\]
Consequently, every pointed limit at a scale comparable to \(m_i^{-1}\)
is gauge equivalent to the flat connection with constant Higgs field
\(H\).  In particular, no nontrivial mass one Euclidean
Yang--Mills--Higgs configuration is obtained at the scale determined by
the total mass.

The obstruction is that the mass-normalized asymptotic Higgs fields converge to \(H\), which lies on a Weyl wall: its centralizer contains the
embedded \(\mathfrak{su}(2)\).  The coercivity mechanism used in the
rank one argument above is restored, for example, under the uniform
regularity condition
\[
    \inf_i\
    \inf_{\substack{
        \xi\perp\ker\operatorname{ad}\Phi_{\infty,i}\\
        |\xi|=1}}
    \frac{|[\Phi_{\infty,i},\xi]|}{m_i}
    >0.
\]
For \(G=\SU(2)\) or \(G=\SO(3)\), this condition is automatic, since the
centralizer of every nonzero element is one-dimensional.
\end{remark}

\subsection{Monopoles: concentration measures and complete bubble clusters}

For the remainder of this section, let $E\to X$ be an $\SU(2)$-bundle
and let $(A_i,\Phi_i)$ be finite energy monopoles of fixed charge $k>0$
and masses $m_i\to\infty$. This subsection replaces the fixed centre
bubbling and measure arguments in the published version. It proves the
corrected coefficient in Theorem~\ref{thm: Main_Monopoles}\textup{(d)}
and, as an additional result, gives a complete description of the finite
cluster of mutually separating mass one Euclidean monopoles over each
concentration point. There are no nontrivial profiles below the mass scale
and no secondary concentration inside an extracted mass one monopole.

Set
\[
    e_i:=\abs{\text{d}_{A_i}\Phi_i}^2=\abs{F_{A_i}}^2,
    \qquad \mu_i:=m_i^{-1}e_i\vol_g.
\]
The energy identity gives
\begin{equation}\label{eq:total-mass-mu}
    \mu_i(X)=4\pi k.
\end{equation}
Here and below we use the energy identity from
\eqref{eq: Energy_Formula}.

Introduce the mass-normalized fields
\begin{equation}\label{eq:mass-normalized-monopoles}
    \eps_i:=m_i^{-1},
    \qquad
    \phi_i:=m_i^{-1}\Phi_i.
\end{equation}
They satisfy
\begin{equation}\label{eq:scaled-bogomolny}
    \eps_i*F_{A_i}=\text{d}_{A_i}\phi_i,
    \qquad
    \abs{\phi_i}\leqslant1,
\end{equation}
where the pointwise bound follows from the maximum principle and the
finite energy asymptotic formula $|\Phi_i|\to m_i$.
Moreover, on every Borel set $U\subset X$,
\begin{equation}\label{eq:mass-renormalized-functional-measure}
\begin{split}
    \eps_i^{-1}\int_U
    \left(\eps_i^2\abs{F_{A_i}}^2
          +\abs{\text{d}_{A_i}\phi_i}^2\right)\vol_g
    &=2\mu_i(U),\\
    2\left\langle F_{A_i}\wedge \text{d}_{A_i}\phi_i\right\rangle
    &=2\mu_i.
\end{split}
\end{equation}
The second identity is an equality of Radon measures, with the
$3$-form on the left interpreted using the orientation of $X$. Thus energy and charge do
not separate for monopoles.

We first record the small energy description away from concentration.
The subsequent extraction uses only this local regularity, the Bogomolny
scaling law, and the degree of the normalized Higgs field.

\begin{lemma}[Compactness at the mass scale near a Higgs zero]
\label{lem:zero-centred-mass-scale}
Let \(p_i\) remain in a fixed compact subset of \(X\), and suppose that
\(\Phi_i(p_i)=0\).  Put
\[
 s_{i,p_i}(z)=\exp_{p_i}(\eps_i z),\qquad
 g_{i,p_i}=\eps_i^{-2}s_{i,p_i}^*g,
\]
and consider the rescalings at the mass scale
\[
 (A_i',\Phi_i')=s_{i,p_i}^*(A_i,\phi_i).
\]
After passing to a subsequence and changing gauge, these converge
smoothly on compact subsets of \(\mathbb R^3\) to a mass one Euclidean
monopole \((A_\infty,\Phi_\infty)\) of charge
\(q\in\{1,\ldots,k\}\).  Moreover,
\[
 \lim_{R\to\infty}\lim_{i\to\infty}
 \mu_i(B_{R\eps_i}(p_i))
 =\mathscr E_{\mathbb R^3}(A_\infty,\Phi_\infty)=4\pi q.
\]
\end{lemma}

\begin{proof}
The metrics \(g_{i,p_i}\) converge smoothly on compact subsets to the
Euclidean metric.  Lemma~\ref{lem:general-YMH-coarse-density}, applied
with \(\Lambda=4\pi k\), gives a uniform pointwise bound for the
mass one rescaled energy density.  The Bogomolny equation, the Bianchi
identity, and Coulomb gauge fixing therefore give uniform bounds for all
derivatives on smaller balls.  More explicitly, the curvature is
uniformly bounded in \(L^p\), \(p>3/2\), on each fixed ball; hence
\cite{uhlenbeck1982connections}*{Theorem~1.3} supplies Coulomb gauges,
and the first-order Bogomolny equation together with
\(\text{d}_A^*F_A=[\text{d}_A\Phi,\Phi]\) and
\(\Delta_A\Phi=0\) supplies the elliptic bootstrap.  A diagonal argument gives a smooth Euclidean monopole limit with
$\Phi_\infty(0)=0$ and energy at most $4\pi k$. We next show that its
mass is one; this will in particular imply that the limit is
nontrivial.

It remains to identify its mass.  The Green representation
\cite{fadel2023asymptotics}*{Theorem~3.11}, divided by \(m_i^2\), is
\begin{equation}\label{eq:green-mass-scale-zero}
 1-|\Phi_i'(z)|^2
 =
 2\int_X\eps_iG(s_{i,p_i}(z),y)\,d\mu_i(y).
\end{equation}
On fixed rescaled compact sets the kernel converges, away from the
diagonal, to the Euclidean Green kernel.  The global bound
\(\mu_i(X)=4\pi k\) and the standard AC Green estimate
\(G(x,y)\leqslant C\,d(x,y)^{-1}\) imply that the contribution of
\(d(y,p_i)\geq L\eps_i\) is at most \(Ck/L+o(1)\), locally uniformly in
\(z\).  The estimate follows from
\cite{fadel2023asymptotics}*{Corollary~2.9 and equation~(2.13)}; on the
fixed compact coordinate ball it is also the usual local Green bound, and
outside that ball the extra factor \(\eps_i\) makes the contribution
\(o(1)\).  Passing first to \(i\to\infty\) and then to \(L\to\infty\)
gives
\[
 1-|\Phi_\infty(z)|^2
 =
 2\int_{\mathbb R^3}G_{\mathbb R^3}(z,w)
 |\text{d}_{A_\infty}\Phi_\infty(w)|^2\,dw.
\]
The right-hand side tends to zero at infinity by
\cite{fadel2023asymptotics}*{Theorem~3.11}, now applied on
\(\mathbb R^3\).  Thus the limiting mass is one.

The integrality and energy formula in
\cite{fadel2023asymptotics}*{Theorem~1.1} give
\(\mathscr E_{\mathbb R^3}=4\pi q\) with \(q\in\mathbb N\).  Lower
semicontinuity and the global energy bound give \(q\leqslant k\).
For each fixed \(R\), smooth convergence and the scaling identity give
\[
 \lim_{i\to\infty}\mu_i(B_{R\eps_i}(p_i))
 =\mathscr E_{B_R(0)}(A_\infty,\Phi_\infty).
\]
Letting \(R\to\infty\) proves the final assertion.
\end{proof}

\begin{lemma}[Local abelianization on a zero-free region]
\label{lem:zero-free-abelianization}
Let \(B_{2r}(x)\) be a fixed geodesic ball.  Suppose that, for all
sufficiently large \(i\),
\[
 \Phi_i^{-1}(0)\cap B_{2r}(x)=\varnothing,\qquad
 \inf_{B_{2r}(x)}|\phi_i|\longrightarrow1,
\]
and that no mass scale concentration occurs there, in the sense that for
every fixed \(R<\infty\),
\[
 \sup_{p\in B_{3r/2}(x)}
 \mu_i(B_{R\eps_i}(p))\longrightarrow0.
\]
Then, after passing to a subsequence,
\[
 h_i:=m_i^{-1/2}d|\Phi_i|
\]
converges smoothly on \(B_r(x)\) to a harmonic one-form \(h\), and
\[
 m_i^{-1}|F_{A_i}|^2
 =m_i^{-1}|\text{d}_{A_i}\Phi_i|^2
 \longrightarrow |h|^2
\]
smoothly on \(B_r(x)\).
\end{lemma}

\begin{proof}
Write \(\Psi_i=\Phi_i/|\Phi_i|\) and use the orthogonal splitting
\[
 \mathfrak g_E
 =\langle\Psi_i\rangle\oplus\langle\Psi_i\rangle^\perp.
\]
The algebraic formulae for this splitting are given in
\cite{fadel2023asymptotics}*{Equations~(3.11)--(3.14)}. In particular,
\[
    |[\Phi_i,\xi]|
    \geq
    |\Phi_i|\,|\xi^\perp|.
\]

The nonconcentration hypothesis and
Theorem~\ref{thm: e-regularity}, applied on balls of
radius \(R\eps_i\) with \(R\) fixed, imply
\[
 m_i^{-4}\sup_{B_{4r/3}(x)}
 \left(|F_{A_i}|^2+|\text{d}_{A_i}\Phi_i|^2\right)\longrightarrow0.
\]
The local Bochner inequalities
\cite{fadel2023asymptotics}*{Lemma~4.8}, whose proof is pointwise and
does not use the geometry of the end, can therefore be applied with
\(|\Phi_i|\geq m_i/2\).  After the lower-order terms are absorbed, they
give on \(B_{4r/3}(x)\)
\[
 \Delta |\Xi_i|^2\leqslant-cm_i^2|\Xi_i|^2,\qquad
 \Xi_i=\bigl([\text{d}_{A_i}\Phi_i,\Phi_i],
             [F_{A_i},\Phi_i]\bigr),
\]
for all large \(i\).  We spell out the local comparison used in
\cite{fadel2023asymptotics}*{Theorem~4.11}.  Smooth the distance
\(s(y)=d(y,\partial B_{4r/3}(x))\) on a fixed collar and compare
\(|\Xi_i|^2\) there with
\[
 C m_i^6\exp(-a m_i s(y)).
\]
The polynomial prefactor follows from the coarse density bound:
$|F_{A_i}|+|\text{d}_{A_i}\Phi_i|=O(m_i^2)$ and $|\Phi_i|=O(m_i)$ on the
fixed ball.  For
\(a>0\) small, bounded geometry gives
\((\Delta+c m_i^2)\exp(-a m_i s)\geq0\).  The maximum principle on the
collar and then on its interior gives exponential decay on \(B_r(x)\).
Interior estimates for the differentiated equations therefore give,
for every \(j\geq0\),
\begin{equation}\label{eq:local-transverse-decay}
 m_i^{-2-j}\left(
 |\nabla_{A_i}^j(F_{A_i})^\perp|
 +|\nabla_{A_i}^j(\text{d}_{A_i}\Phi_i)^\perp|
 \right)\longrightarrow0
\end{equation}
uniformly on \(B_r(x)\).  The polynomial prefactor supplied by the coarse
density bound is dominated by the exponential
\(\exp(-cm_ir)\).  Differentiated versions follow from the differentiated
Bogomolny equation and interior elliptic estimates.

Since
\[
 \text{d}_{A_i}\Phi_i
 =d|\Phi_i|\,\Psi_i+|\Phi_i|\text{d}_{A_i}\Psi_i,
\]
the second term is the transverse component.  Hence
\[
 m_i^{-1}|\text{d}_{A_i}\Phi_i|^2
 =|h_i|^2+o(1)
\]
smoothly on \(B_r(x)\).  The forms \(h_i\) are closed and have uniformly
bounded local \(L^2\)-norm.  The scalar identity following from
\(\Delta_{A_i}\Phi_i=0\) is
\[
 d^*h_i
 =-m_i^{-1/2}
 \frac{|(\text{d}_{A_i}\Phi_i)^\perp|^2}{|\Phi_i|}.
\]
By \eqref{eq:local-transverse-decay}, the right-hand side tends smoothly
to zero.  Interior estimates for \(d+d^*\), followed by
Rellich compactness and bootstrapping, give smooth convergence to a
closed and coclosed one-form \(h\).  Finally the Bogomolny equation gives
the same conclusion for \(m_i^{-1}|F_{A_i}|^2\).
\end{proof}

\begin{lemma}[Limit under a small energy hypothesis]
\label{lem:small-energy-limit}
There are $r_0,\eta_0>0$, depending only on the bounded geometry of $X$,
with the following property. Suppose that $B_{4r}(x)$ is a geodesic ball,
$0<r\leqslant r_0$, and
\[
    \limsup_{i\to\infty}\mu_i(B_{4r}(x))<\eta_0.
\]
Then $\Phi_i$ is zero-free on $B_{2r}(x)$ for all sufficiently large $i$.
After passing to a subsequence, the forms
\[
    h_i:=m_i^{-1/2}d\abs{\Phi_i}
\]
converge smoothly on $B_r(x)$ to a harmonic $1$-form $h$, while
\begin{equation}\label{eq:local-diffuse-limit}
    m_i^{-1}e_i\longrightarrow\abs{h}^2
    \qquad\text{smoothly on }B_r(x).
\end{equation}
The local limits obtained on overlapping small energy balls are compatible
on overlaps.
\end{lemma}

\begin{proof}
Choose \(\eta_0<4\pi\).  If zeros \(p_i\in B_{2r}(x)\) existed along a
subsequence, Lemma~\ref{lem:zero-centred-mass-scale} would give, for
every fixed \(R\),
\[
 \liminf_{i\to\infty}\mu_i(B_{R\eps_i}(p_i))
 =\mathscr E_{B_R(0)}(A_\infty,\Phi_\infty).
\]
Since \(B_{R\eps_i}(p_i)\subset B_{4r}(x)\) for large \(i\), letting
\(R\to\infty\) would give
\(\liminf_i\mu_i(B_{4r}(x))\geq4\pi\), a contradiction.  Thus the
fields are zero-free on \(B_{2r}(x)\).

We next prove that \(|\phi_i|\to1\) uniformly on \(B_{3r/2}(x)\).
Otherwise, after taking a subsequence, there are \(y_i\in B_{3r/2}(x)\)
and \(\delta>0\) with \(|\phi_i(y_i)|\leqslant1-\delta\). Set
\(c_\delta:=1-(1-\delta)^2>0\). The Green identity gives
\[
 c_\delta
 \leqslant
 1-|\phi_i(y_i)|^2
 =2\int_X\eps_iG(y_i,z)\,d\mu_i(z).
\]
Choose \(L\) so large that the contribution of
\(X\setminus B_{L\eps_i}(y_i)\) is at most \(c_\delta/4\), using the
Green estimate from \eqref{eq:green-mass-scale-zero}. By
Lemma~\ref{lem:general-YMH-coarse-density}, the density of \(\mu_i\) is
bounded by \(C\eps_i^{-3}\). Hence, for every fixed \(\sigma\in(0,1)\),
\[
 2\int_{B_{\sigma\eps_i}(y_i)}
   \eps_iG(y_i,z)\,d\mu_i(z)
 \leqslant C\sigma^2.
\]
Choose \(\sigma\) so small that this is at most \(c_\delta/4\). On the
remaining annulus
\(B_{L\eps_i}(y_i)\setminus B_{\sigma\eps_i}(y_i)\) one has
\(\eps_iG(y_i,z)\leqslant C/\sigma\). It follows that
\[
 \liminf_{i\to\infty}\mu_i(B_{L\eps_i}(y_i))
 \geqslant c(\delta,\sigma)>0.
\]
Rescaling about \(y_i\) at the mass scale, the coarse density bound,
Uhlenbeck gauge fixing, and the Bogomolny equation give smooth convergence
on compact subsets to a nontrivial finite energy Euclidean monopole. The
Green representation, exactly as in
Lemma~\ref{lem:zero-centred-mass-scale}, shows that its mass is one. Its
energy is positive, and hence its charge is a positive integer. The
normalized limiting Higgs field therefore has nonzero degree on every
sufficiently large regular sphere. By smooth convergence on such a
sphere, the normalized Higgs fields of the original sequence have the
same degree for all large \(i\), and consequently \(\Phi_i\) has a zero
in the enclosed mass-scale ball. This contradicts the zero-free property
of \(B_{2r}(x)\). Hence \(|\phi_i|\to1\).

The same argument shows that no mass scale concentration occurs on
\(B_{3r/2}(x)\): a fixed positive amount of energy on a ball of radius
\(R\eps_i\) would produce a positive charge mass one limit, whose
nonzero degree would force a nearby zero of \(\Phi_i\).
Lemma~\ref{lem:zero-free-abelianization} now applies and
gives the asserted smooth convergence and
\eqref{eq:local-diffuse-limit}.  Compatibility on overlaps follows
because \(h_i=m_i^{-1/2}d|\Phi_i|\) is globally defined wherever
\(\Phi_i\neq0\), so two subsequential limits agree after the common
diagonal extraction.
\end{proof}

By local weak-$*$ compactness of Radon measures, after passing to a
subsequence there is a Radon measure $\mu$ on $X$ such that
\[
    \mu_i\wto\mu.
\]
Fix $\eta_0$ as in Lemma~\ref{lem:small-energy-limit} and define
\begin{equation}\label{eq:threshold-concentration-set}
    T
    :=
    \left\{
       x\in X:\mu(\{x\})\geqslant\eta_0
    \right\}.
\end{equation}
Since $\mu(X)\leqslant4\pi k$, the set $T$ is finite. If
$x\notin T$, then for some sufficiently small $r>0$ one has
$\mu(\overline{B_{4r}(x)})<\eta_0$. The Portmanteau theorem and
Lemma~\ref{lem:small-energy-limit}, followed by a diagonal extraction over
a countable relatively compact cover of $X\setminus T$, therefore give a
harmonic $1$-form $h$ on $X\setminus T$ such that
\begin{equation}\label{eq:diffuse-part-measure}
    \mu|_{X\setminus T}=\abs{h}^2\vol_g.
\end{equation}
The bound $\mu_i(X)=4\pi k$ and local smooth convergence imply that
$h\in L^2(X\setminus T)$.

\begin{lemma}[Absence of a diffuse limit of the mass-renormalized energy]
\label{lem:no-diffuse-limit}
The harmonic form constructed above extends across $T$ and vanishes
identically on $X$.
\end{lemma}

\begin{proof}
Each $h_i$ is exact. If $\gamma\subset X\setminus T$ is a smooth closed
curve, then smooth local convergence gives
\[
    \int_\gamma h
    =
    \lim_{i\to\infty}\int_\gamma h_i
    =0.
\]
Thus the closed form $h$ is globally exact on $X\setminus T$. Its $L^2$
bound gives removable singularities across the finitely many punctures.
For completeness, let \(\chi_\delta\) vanish on \(B_\delta(x_a)\), equal
one outside \(B_{2\delta}(x_a)\), and satisfy
\(|d\chi_\delta|\leqslant C\delta^{-1}\).  In dimension three,
\(\|d\chi_\delta\|_{L^2}=O(\delta^{1/2})\).  Testing the equations
\(dh=d^*h=0\) against \(\chi_\delta\) times a smooth
compactly supported form, and then letting \(\delta\downarrow0\), shows that they hold
distributionally across \(x_a\).  Interior elliptic regularity for the
Hodge Laplacian makes the extension smooth.  Since every loop in a
three-manifold can be perturbed away from finitely many points, the
extended form still has zero periods.  Thus
$h=df$ for a harmonic function $f$ on $X$ with
$df\in L^2$. The
finite Dirichlet energy Liouville theorem
\cite{fadel2023asymptotics}*{Theorem~2.22} implies that $f$ is constant.
Hence $h=0$.
\end{proof}

Lemma~\ref{lem:no-diffuse-limit} and
\eqref{eq:diffuse-part-measure} now give
\begin{equation}\label{eq:concentration-measure-prequantized}
    \mu_i\wto\sum_{a=1}^{\ell}\Theta(x_a)\delta_{x_a}
    \qquad\text{as Radon measures on }X,
\end{equation}
where $T=\{x_1,\ldots,x_\ell\}$ and
\[
    \Theta(x_a):=\mu(\{x_a\})\geqslant\eta_0
\]
is the concentration weight at $x_a$.

\begin{proposition}[Quantization and Higgs zeros]
\label{prop:quantization-zeros}
For every $x\in T$ there is a positive integer $K_x$ such that
\[
    \Theta(x)=4\pi K_x.
\]
Moreover, $T=S=Z$.
Consequently,
\begin{equation}\label{eq:quantized-measure-limit}
    \mu_i\wto4\pi\sum_{a=1}^{\ell}K_{x_a}\delta_{x_a},
    \qquad
    \sum_{a=1}^{\ell}K_{x_a}\leqslant k.
\end{equation}
\end{proposition}

\begin{proof}
Fix \(x\in T\).  Choose \(r>0\) so that
\(\overline{B_{2r}(x)}\cap T=\{x\}\) and
\(\mu(\partial B_r(x))=0\).  By
Lemmas~\ref{lem:small-energy-limit} and~\ref{lem:no-diffuse-limit}, the
fields are zero-free on a neighbourhood of \(\partial B_r(x)\) for all
large \(i\), and, writing \(\Psi_i=\phi_i/|\phi_i|\), one has
\[
    |\phi_i|\longrightarrow1,
    \qquad
    \text{d}_{A_i}\Psi_i\longrightarrow0
\]
smoothly there.  We shall also use the corresponding rate.  On this fixed
neighbourhood, the Green representation and the smooth convergence of
\(m_i^{-1}e_i\) imply
\[
    1-|\phi_i|=O(m_i^{-1}),
    \qquad
    m_i^{-1/2}|F_{A_i}|=O(1).
\]
Indeed,
\[
    m_i\bigl(1-|\phi_i|^2\bigr)
    =
    2\int_XG(\,\cdot\,,y)\,d\mu_i(y),
\]
and the potentials on the right are uniformly bounded on compact subsets
of \(X\setminus T\): locally this follows from the smooth density
convergence, while the remaining contribution is controlled by the total
mass and the Green estimate.  Let
\[
    K_i:=\deg\bigl(\Psi_i|_{\partial B_r(x)}\bigr)\in\mathbb Z.
\]
The Bogomolny equation and the Bianchi identity give the exact flux
identity
\[
    \mu_i(B_r(x))
    =\int_{\partial B_r(x)}\langle\phi_i,F_{A_i}\rangle.
\]
With the inner product and orientation conventions of the published
paper, the Chern--Weil identity on \(\partial B_r(x)\) is
\[
 \int_{\partial B_r(x)}\langle\Psi_i,F_{A_i}\rangle
 =4\pi K_i
 +\frac12\int_{\partial B_r(x)}
   \langle\Psi_i,[\text{d}_{A_i}\Psi_i,\text{d}_{A_i}\Psi_i]\rangle.
\]
The second integral tends to zero.  The preceding rate estimates also
give
\[
 \left|
 \int_{\partial B_r(x)}
 \langle\phi_i-\Psi_i,F_{A_i}\rangle
 \right|
 \leqslant
 C\,m_i^{-1/2}
 \longrightarrow0.
\]
Thus replacing \(\phi_i\) by \(\Psi_i\) in the flux produces an
\(o(1)\) error.  Since
\(\mu_i(B_r(x))\to\Theta(x)\), it follows that
\[
    \Theta(x)=4\pi K_i+o(1).
\]
The integers \(K_i\) are therefore eventually constant.  Denoting their
common value by \(K_x\), positivity of \(\Theta(x)\) gives
\(K_x\in\mathbb N\) and
\[
    \Theta(x)=4\pi K_x.
\]

Every finite accumulation point of Higgs zeros belongs to \(T\) by the
small energy lower bound.  Conversely, the nonzero degree \(K_x\) on
\(\partial B_r(x)\) forces a zero of \(\Phi_i\) in \(B_r(x)\) for all
large \(i\); letting \(r\downarrow0\) gives \(x\in Z\).  Thus
\(T=Z\).

It remains to identify \(T\) with \(S\).  If \(x\in T\), then every ball
about \(x\) carries limiting mass at least \(\Theta(x)>0\), so
\(x\in S\).  If \(x\notin T\), choose \(r>0\) with
\(\overline{B_r(x)}\cap T=\varnothing\); since the limiting measure is
supported on \(T\), one has \(\mu_i(B_r(x))\to0\), and hence
\(x\notin S\).  Therefore \(S=T=Z\).  Finally,
\(\mu_i(X)=4\pi k\) and vague convergence give
\(\sum_{x\in S}K_x\leqslant k\).
\end{proof}

Having established $T=S=Z$, we henceforth use the published notation $S$.
We next explain why the equation selects the scale $m_i^{-1}$ and why the
resulting Euclidean profiles do not bubble again.

For points $p_i\to x$ and scales $r_i\downarrow0$, define
\[
    s_{i,p_i}^{(r_i)}(z):=\exp_{p_i}(r_i z),
    \qquad
    g_{i,p_i}^{(r_i)}:=r_i^{-2}(s_{i,p_i}^{(r_i)})^*g,
\]
and rescale the Higgs field by its natural Bogomolny weight,
\[
    \widehat\Phi_i^{(r_i)}
    :=r_i(s_{i,p_i}^{(r_i)})^*\Phi_i.
\]
Then
\begin{equation}\label{eq:general-monopole-rescaling}
    *_{g_{i,p_i}^{(r_i)}}F_{(s_{i,p_i}^{(r_i)})^*A_i}
    =
    d_{(s_{i,p_i}^{(r_i)})^*A_i}
    \widehat\Phi_i^{(r_i)},
\end{equation}
and the rescaled mass is $r_im_i$. Moreover, for every fixed $R>0$,
\begin{equation}\label{eq:energy-rescaling-general-scale}
\begin{split}
&\mathscr E_{B_R(0),g_{i,p_i}^{(r_i)}}
 \left((s_{i,p_i}^{(r_i)})^*A_i,
       \widehat\Phi_i^{(r_i)}\right) \\
&\qquad=
 r_i\,\mathscr E_{B_{Rr_i}(p_i),g}(A_i,\Phi_i)
 =
 (r_im_i)\,\mu_i(B_{Rr_i}(p_i)).
\end{split}
\end{equation}

\begin{lemma}[Selection of the mass scale and absence of secondary bubbling]
\label{lem:mass-scale selection}
Suppose that a rescaling as above converges smoothly locally, modulo gauge,
to a nontrivial finite energy Euclidean monopole of finite mass. Then,
after passing to a subsequence,
\[
    r_im_i\longrightarrow\lambda\in(0,\infty).
\]
Thus every nontrivial finite mass Euclidean profile occurs at a scale
comparable to $m_i^{-1}$; after applying the standard Euclidean
Bogomolny scaling to the coordinates and the Higgs field, it may be
rescaled to have mass one.

At the distinguished scale $r_i=\eps_i=m_i^{-1}$, pointed sequences with
uniformly bounded charge are smoothly locally precompact modulo gauge and
cannot develop secondary bubbles at scales $o(\eps_i)$. Any nontrivial
pointed limit obtained by centring at a zero of $\Phi_i$ is a mass one
Euclidean monopole.
\end{lemma}

\begin{proof}
The first assertion follows directly from
\eqref{eq:general-monopole-rescaling}. If $r_im_i\to0$, then the total
energy of the rescaled configuration is bounded by
\[
    r_i\mathscr E_X(A_i,\Phi_i)
    =4\pi k\,r_im_i\longrightarrow0
\]
by the monopole energy identity. Hence every smooth finite energy limit is
flat with a covariantly constant Higgs field of mass zero, and is therefore
trivial. Suppose instead that $r_im_i\to\infty$. Fix
\(\delta\in(0,1)\).  The estimate for the radius of the region where the Higgs field is
small, either Theorem~\ref{thm: taubes_estimate} or its finite energy
version \eqref{eq:finite-energy-radius-estimate}, applied with centre
$p_i$, gives
a point \(y_i\) with
\[
 d(p_i,y_i)\leqslant R_\delta m_i^{-1},
 \qquad |\Phi_i(y_i)|\geq\delta m_i,
\]
where \(R_\delta\) is independent of \(i\).  In the \(r_i\)-rescaled
coordinates this point has distance at most
\(R_\delta/(r_im_i)\to0\) from the origin, whereas the rescaled Higgs
field has norm at least \(\delta r_im_i\to\infty\).  This contradicts
smooth local convergence to a finite valued Euclidean configuration. A
nontrivial finite mass profile consequently requires
$r_im_i\to\lambda\in(0,\infty)$.  Applying the standard Euclidean
Bogomolny scaling with factor $\lambda$ converts the limiting mass to
one.

Now take $r_i=\eps_i$. The rescaled configurations have mass one and total
energy at most $4\pi k$ on their expanding domains.
Lemma~\ref{lem:general-YMH-coarse-density}, Uhlenbeck gauge fixing, and
elliptic bootstrapping, exactly as in
Lemma~\ref{lem:zero-centred-mass-scale}, give smooth local precompactness.
Suppose that a pointed
sequence developed a further concentration at Euclidean scales
$\rho_i\downarrow0$. Rescaling a second time by $\rho_i$ multiplies the
energy by $\rho_i$; hence its total energy is at most
$4\pi k\rho_i\to0$. On the other hand, choosing the second scale by the
usual concentration function or maximal energy density normalization and
using the corrected $\varepsilon$-regularity estimate would produce a
nontrivial limit with a fixed positive amount of energy on a unit ball.
This contradiction excludes secondary concentration. This is only
a pointed compactness statement: global noncompactness remains possible
because different monopole constituents may separate to spatial infinity.

It remains to identify the mass of a nontrivial pointed limit. Write the rescaled mass-normalized fields as
\[
    \widetilde\phi_i=(s_{i,p_i}^{(\eps_i)})^*\phi_i.
\]
The Green representation for $m_i^2-|\Phi_i|^2$ from
\cite{fadel2023asymptotics}*{Theorem~3.11}, divided by $m_i^2$, reads
\begin{equation}\label{eq:rescaled-green-representation}
    1-|\widetilde\phi_i(z)|^2
    =2\int_X
      \eps_iG\bigl(s_{i,p_i}^{(\eps_i)}(z),y\bigr)\,d\mu_i(y).
\end{equation}
On a fixed rescaled ball, the kernels
\[
    \eps_iG\bigl(s_{i,p_i}^{(\eps_i)}(z),
                 s_{i,p_i}^{(\eps_i)}(w)\bigr)
\]
converge locally away from the diagonal to the Euclidean Green kernel
$G_{\mathbb R^3}(z,w)$. The total masses of the measures $\mu_i$ are
bounded by $4\pi k$. For $z$ in a fixed compact set, the contribution of
the region $|w|>L$ in the rescaled chart is bounded by $Ck/L$, while the
contribution from outside a fixed geodesic ball about $x$ is $O(\eps_i)$.
Thus sources which escape to infinity in the rescaled coordinates
contribute zero on compact sets. Passing first to $i\to\infty$ and then to
$L\to\infty$ in \eqref{eq:rescaled-green-representation} yields the
Euclidean Green representation with constant term one,
\[
    1-|\Phi_\infty(z)|^2
    =2\int_{\mathbb R^3}G_{\mathbb R^3}(z,w)
      |\text{d}_{A_\infty}\Phi_\infty(w)|^2\,dw.
\]
The Newtonian potential on the right tends to zero as $|z|\to\infty$ by
\cite{fadel2023asymptotics}*{Theorem~3.11}.
Thus $|\Phi_\infty|\to1$, and the limit has mass one. If the centres are
zeros, then $\Phi_\infty(0)=0$, so the limit is nontrivial.
\end{proof}

The preceding lemma shows that the only loss of compactness at a point
$x\in S$ is translation of mass one monopoles to infinity in the
coordinates rescaled by the mass. Profiles that remain at bounded mutual distance are contained
in one, possibly higher charge, Euclidean monopole; distinct Euclidean
profiles must separate by distances much larger than $m_i^{-1}$.

\begin{proposition}[Complete moving centre bubble cluster]
\label{prop:cluster-identity}
For every $x\in S$, there exist $N_x\geqslant1$, sequences of zeros
\[
    p_{i,x,\beta}\in\Phi_i^{-1}(0),
    \qquad
    p_{i,x,\beta}\longrightarrow x,
    \qquad 1\leqslant\beta\leqslant N_x,
\]
and nontrivial mass one Euclidean monopoles
$(A_{x,\beta},\Phi_{x,\beta})$ of positive charges $q_{x,\beta}$ such that,
after gauge and passage to a subsequence,
\[
    s_{i,p_{i,x,\beta}}^*(A_i,\phi_i)
    \longrightarrow(A_{x,\beta},\Phi_{x,\beta})
    \qquad\text{smoothly locally on }\mathbb R^3,
\]
where $s_{i,p}(z)=\exp_p(\eps_i z)$. The centres of distinct profiles
satisfy
\begin{equation}\label{eq:mass-scale-separation}
    \frac{d(p_{i,x,\beta},p_{i,x,\gamma})}{\eps_i}
    \longrightarrow\infty
    \qquad(\beta\neq\gamma).
\end{equation}
Their charges exhaust the concentration multiplicity,
\begin{equation}\label{eq:cluster-charge-identity}
    K_x=\sum_{\beta=1}^{N_x}q_{x,\beta},
\end{equation}
and hence
\begin{equation}\label{eq:local-bubble-energy-identity}
    4\pi K_x
    =
    \sum_{\beta=1}^{N_x}
    \mathscr E_{\mathbb R^3}(A_{x,\beta},\Phi_{x,\beta}).
\end{equation}
More precisely, choose $r>0$ such that
$\overline{B_{2r}(x)}\cap S=\{x\}$ and
$\mu(\partial B_r(x))=0$. Then
\begin{equation}\label{eq:cluster-no-loss}
\lim_{R\to\infty}\limsup_{i\to\infty}
\mu_i\left(
B_r(x)\setminus
\bigcup_{\beta=1}^{N_x}B_{R\eps_i}(p_{i,x,\beta})
\right)=0.
\end{equation}
Thus, no mass-renormalized energy remains between the finite collection of mass one profiles and the ambient scale.
\end{proposition}

\begin{proof}
Fix $x\in S$ and a radius $r$ as in the statement. By
Proposition~\ref{prop:quantization-zeros}, for all sufficiently large $i$ the normalized Higgs field is nonzero on $\partial B_r(x)$ and has degree
$K_x$ there. Since $x\in Z$, choose zeros $p_{i,x,1}\to x$.
Lemma~\ref{lem:mass-scale selection}, applied at the mass scale, gives a
mass one Euclidean monopole $(A_{x,1},\Phi_{x,1})$ of charge
$q_{x,1}\geqslant1$.

Choose a large regular radius $R$ for this Euclidean monopole, so that its
Higgs field is nonzero on $\partial B_R(0)$ and has degree $q_{x,1}$ there.
Smooth convergence implies that, for all sufficiently large $i$, the
normalized Higgs field on
$\partial B_{R\eps_i}(p_{i,x,1})$ is nonzero and has the same degree. If
$q_{x,1}=K_x$, the extraction stops. Otherwise, degree additivity forces a
new zero outside this ball. Indeed, if the normalized Higgs field were
nonzero on
\[
    B_r(x)\setminus\overline{B_{R\eps_i}(p_{i,x,1})},
\]
the sum of the degrees on its oriented boundary components would vanish,
which would give $K_x=q_{x,1}$, a contradiction.

Choose such a zero as a second centre. Taking first $i\to\infty$ and then
letting $R\to\infty$ by a diagonal argument produces a sequence
$p_{i,x,2}\to x$ satisfying
$d(p_{i,x,1},p_{i,x,2})/\eps_i\to\infty$.  To see the first convergence
without any hidden compactness assumption, choose \(r\) above so that
\(\overline{B_{2r}(x)}\cap Z=\{x\}\), which is possible because
\(Z=S\) is finite.  Every accumulation point of the selected zeros in
\(\overline{B_r(x)}\) must then be \(x\).
Lemma~\ref{lem:mass-scale selection} gives a second mass one Euclidean
monopole of positive charge $q_{x,2}$. Repeating the argument after deleting
large, pairwise disjoint mass scale balls about the already selected centres
produces profiles satisfying \eqref{eq:mass-scale-separation}. At every
finite stage, positivity of $\mu_i$ and smooth convergence on the selected
balls give
\[
    4\pi\sum_{\beta=1}^{j}q_{x,\beta}
    \leqslant 4\pi K_x,
\]
so the accumulated charge can never exceed $K_x$. If the inequality is
strict, degree additivity again forces a zero in the complement and hence
another profile. Since every $q_{x,\beta}$ is a positive integer, the
procedure terminates after at most $K_x$ steps, and at termination
\[
    \sum_{\beta=1}^{N_x}q_{x,\beta}=K_x.
\]
This proves \eqref{eq:cluster-charge-identity}.

It remains to verify that no mass-renormalized energy is missed. For fixed $R$,
the balls $B_{R\eps_i}(p_{i,x,\beta})$ are pairwise disjoint for all large
$i$ by \eqref{eq:mass-scale-separation}. Positivity and additivity of the
measure give
\[
\begin{split}
&\mu_i\left(
B_r(x)\setminus
\bigcup_{\beta=1}^{N_x}B_{R\eps_i}(p_{i,x,\beta})
\right)\\
&\qquad=
\mu_i(B_r(x))
-
\sum_{\beta=1}^{N_x}
\mu_i(B_{R\eps_i}(p_{i,x,\beta})).
\end{split}
\]
The first term tends to $4\pi K_x$ because
$\mu(\partial B_r(x))=0$. For each $\beta$, smooth pointed convergence and
the scaling identity give
\[
    \lim_{i\to\infty}
    \mu_i(B_{R\eps_i}(p_{i,x,\beta}))
    =
    \mathscr E_{B_R(0)}(A_{x,\beta},\Phi_{x,\beta}).
\]
Taking $i\to\infty$ and then $R\to\infty$, and using
\[
    \mathscr E_{\mathbb R^3}(A_{x,\beta},\Phi_{x,\beta})
    =4\pi q_{x,\beta},
\]
together with \eqref{eq:cluster-charge-identity}, gives
\eqref{eq:cluster-no-loss} and \eqref{eq:local-bubble-energy-identity}.
\end{proof}

\begin{remark}[Canonicity of the cluster data]
\label{rem:cluster-canonicity}
For a fixed subsequential limit of the mass-renormalized energy measures, the
integer \(K_x\) and the concentration weight \(4\pi K_x\) are intrinsic.
The individual profile decomposition need not be unique: the number
\(N_x\), the charges \(q_{x,\beta}\), and the limiting Euclidean
monopoles may depend on a further subsequence and on the choice of moving
centres.  The word ``complete'' refers to the energy exhaustion identity
\eqref{eq:cluster-no-loss}, not to uniqueness of the list of profiles.
\end{remark}

\begin{remark}[Concentration weight and individual bubbles]
\label{rmk:cluster-versus-bubble-charge}
For each $x\in S$, the integer $K_x$ records the total charge concentrated
at $x$; equivalently, $\Theta(x)=4\pi K_x$ is its concentration weight in
the limiting energy measure. In the published part~\textup{(a)}, the charge $k_x$ may be taken to be
the charge of any one selected bubble and therefore satisfies
$k_x\leqslant K_x$. The coefficient asserted in the published
part~\textup{(d)} need not equal the charge of this selected bubble.
Proposition~\ref{prop:hierarchical-cluster-example} gives an explicit
charge four family for which the selected fixed centre bubble has charge
one while $K_0=4$.  The missing profiles are obtained by translating the
same sequence rescaled by the mass to the other profiles; they do not arise by
rescaling again at smaller scales. The invariant statements are the
cluster charge identity \eqref{eq:cluster-charge-identity} and the no loss
identity \eqref{eq:cluster-no-loss}.
\end{remark}

\begin{remark}[Energy escaping through the end]
\label{rem:energy-escape-end}
Since \(\mu_i(X)=4\pi k\), the family \((\mu_i)\) is tight if and only if
\[
    \sum_{x\in S}K_x=k.
\]
Thus \(4\pi\bigl(k-\sum_{x\in S}K_x\bigr)\) is precisely the
mass-renormalized energy which escapes through the asymptotically
conical end rather than
concentrating at a finite point of \(X\).
\end{remark}

\begin{remark}[Corrections and additional results in version~5]
The complete cluster decomposition and the hierarchical example are new
strengthenings. The correction to the published measure statement consists
of restoring the factor $1/2$ in the displayed energy density and replacing
the charge $k_x$ of one selected bubble by the total cluster charge $K_x$
in part~\textup{(d)} of Theorem~\ref{thm: Main_Monopoles}. The
supporting corrections are the moving centre arguments, the rank one
restriction in Theorem~\ref{thm: Main_YMH}\textup{(a')}, the
uniform-threshold formulation of its part~\textup{(b')}, and the
regularity proof uniform in the mass.
\end{remark}

\section{Convergence as measures}\label{sec: measures}

\noindent\textbf{Corrected and expanded in version~5.}
The local quantization and complete cluster identity established in
Section~\ref{sec: bubbling} now give the corrected measure statement.
The coefficient at a concentration point is the total cluster charge
$K_x$, rather than the charge of one selected bubble.
\medskip

\begin{proof}[Proof of Theorem~\ref{thm: Main_Monopoles}]
Proposition~\ref{prop:quantization-zeros} gives the quantized convergence
of the energy measures, the equality $S=Z$, and
$\sum_{x\in S}K_x\leqslant k$. Proposition~\ref{prop:cluster-identity}
gives the complete moving centre cluster, the local energy and charge
identities, and the no loss statement. It also gives
$1\leqslant N_x\leqslant K_x$ and
\[
    \sum_{x\in S}N_x
    \leqslant
    \sum_{x\in S}K_x
    \leqslant k.
\]
The quantized characterization of $S$ in part~\textup{(b)} follows from
Proposition~\ref{prop:quantization-zeros}. Finally, if
$S\neq\varnothing$, then
\[
    \mathcal H^0(S)\min_{x\in S}K_x
    \leqslant
    \sum_{x\in S}K_x
    \leqslant k,
\]
which proves part~\textup{(c)}. The tightness criterion follows from
$\mu_i(X)=4\pi k$ and the local identity
$\mu(\{x\})=4\pi K_x$.
\end{proof}

\section{\texorpdfstring{The proof of assertion
\eqref{eq:Zeroes_Phi}}{The proof of the zero-set localization assertion}}
\label{appendix: A}

In this section, we shall prove assertion \eqref{eq:Zeroes_Phi}, which says that the zeros of the monopoles constructed via Theorem \ref{thm: Monopoles_Examples} are contained in balls of radius $10m^{-1/2}$ around the $k$ points in $X$ used in the construction. This requires a number of technical ingredients from \cite{oliveira2016monopoles} and so we decided to include this section as an Appendix.

It follows from \cite{oliveira2016monopoles}*{Proposition~6} that the monopole $(A_i, \Phi_i)$ can be written as $(A_i, \Phi_i) = (A^0_i, \Phi^0_i)+(a_i , \phi_i)$, where 

\begin{enumerate}

\item[A.a] $(A^0_i,\Phi_i^0)$ is an approximate monopole constructed in \cite{oliveira2016monopoles}*{Proposition~4}. Moreover, by its own construction, we have that the restriction 
\[
\Phi^0_i : X \backslash \cup_{j=1}^k B_{10 m_i^{-1/2}}(x_j) \rightarrow \mathfrak{su}(2),
\]
satisfies $\vert \Phi^0_i \vert \geq m_i/2$, is nowhere zero, and yields a splitting of the trivial rank-2 complex vector bundle $\underline{\mathbb{C}^2} \cong L \oplus L^{-1}$, where the complex line bundle $L$ is such that 
\[
\deg(L \vert_{\partial B_{10 m_i^{-1/2}}(x_j) })=1,
\]
for all $j =1, \ldots , k$. In other words, the restricted map 
\begin{equation}\nonumber
\Phi^0_i: \partial B_{10 m_i^{-1/2}}(x_j)  \rightarrow \mathfrak{su}(2) \backslash \lbrace 0 \rbrace ,
\end{equation} 
has degree $1$.

\item[A.b] $(a_i , \phi_i)\in \Gamma ((\Lambda^1 \oplus \Lambda^0) \otimes \mathfrak{su}(2))$ satisfies an elliptic equation, when in a certain Coulomb gauge (see \cite{oliveira2016monopoles}*{Lemma~13}). Moreover, from \cite{oliveira2016monopoles}*{Proposition~6}, it satisfies
\begin{equation}\label{eq:H_Estimate}
 \Vert (a_i , \phi_i) \Vert_{H_{1,-\frac{1}{2} } } \lesssim m_i^{-7/4},
\end{equation}
where $H_{1,-1/2}$ is a certain Sobolev space. 

\end{enumerate}

The Sobolev space $H_{1,\nu+1}$, with $\nu=-3/2$ here, is one of several $H_{n, \nu+n}$ constructed using the approximate monopole $(A^0_i,\Phi^0_i)$. These are well adapted to solving the monopole equation, and have the property that, in a certain gauge (see \cite{oliveira2016monopoles}*{Section~5}), one can iterate estimate \eqref{eq:H_Estimate} to obtain that 
\[
\Vert (a_i , \phi_i) \Vert_{H_{n,\nu+n} } \lesssim m_i^{-7/4},
\]
for all $n \in \mathbb{N}$. Moreover, once restricted to certain subsets of $X$, these spaces satisfy a number of interesting properties. Some of these can be easily read from the definition in \cite{oliveira2016monopoles}*{Section~4.1}, and we summarize them below.

\begin{enumerate}

\item[B.a] For every $n\in\mathbb N$ and every compact set $K\subset X$,
there is a constant $c_n(K)>0$, depending only on $g$, $K$, and $n$,
but not on $m_i$, such that
\begin{equation}
\Vert (a_i , \phi_i) \Vert_{L^{2,n} (K) } \leqslant c_n(K) \ m_i^2 \ \Vert (a_i , \phi_i) \Vert_{H_{n,\nu + n} (K) } \lesssim m_i^{1/4}  .
\end{equation}

\item[B.b] For $\epsilon>0$ we consider 
\[
C_{\epsilon} = X \backslash \cup_{j=1}^k B_{\epsilon}(x_j),
\]
i.e. the complement of the balls of radius $\epsilon$ centered at the points $x_j$.
If $k\geqslant2$, let $4d = \min_{j\neq l} \dist(x_j,x_l)$; if $k=1$, choose any fixed $d>0$. Then the balls of radius $d$ around the points $x_j$ are disjoint. Using $d$, we shall consider $C_d$. Certain weight functions $W_n$, on which the spaces $H_{n, \nu + n}$ depend, can be arranged so that 
\begin{equation}
\Vert (a_i , \phi_i) \Vert_{L^{2,n} (K) } \leqslant c_n(K) \ \Vert (a_i , \phi_i) \Vert_{H_{n,\nu + n} (K) } \lesssim m_i^{-7/4}  .
\end{equation}
for every compact set $K\subset C_d$.

\item[B.c] On a sufficiently far region $C$ of the AC end we can use the
fact that $\Phi^0_i$ is nonzero, as mentioned in A.a, to write any
$\mathfrak{su}(2)$-valued tensor $f$ as $f=f^{\Vert}+f^{\perp}$, with
$f^{\Vert}$ denoting the component parallel to $\Phi^0_i$ and $f^{\perp}$
the orthogonal one. On $C$, and for large $m_i$, we can write
\begin{equation}
\Vert (a_i^{\Vert} , \phi_i^{\Vert} ) \Vert_{L^{2,n}_{\nu + n}(C) } + \Vert (a_i^{\perp} , \phi_i^{\perp} ) \Vert_{L^{2,n}(C) } = \Vert (a_i , \phi_i) \Vert_{H_{n,\nu + n}(C)}   \lesssim m_i^{-7/4}  ,
\end{equation}
where the spaces $L^{2,n}_{\nu+n}$ are the more standard Lockhart-McOwen conically weighted spaces.

\end{enumerate}

Combining item~B.a with the Sobolev embedding
$L^{2,n}(K)\hookrightarrow C^{n-2}(K)$, for $n$ sufficiently large, we
obtain
\begin{equation}\label{eq:C^0_Estimate}
\begin{split}
    \norm{(a_i,\phi_i)}_{C^{n-2}(K)}
    &\lesssim m_i^{1/4},\\
    \norm{\Phi_i-\Phi_i^0}_{C^0(K)}
    &\lesssim m_i^{1/4}.
\end{split}
\end{equation}
for any compact set $K \subset X$. In particular, $(a_i,\phi_i)$ is smooth. Moreover, as mentioned in A.a, $\vert \Phi^0_i \vert \geq m_i/2$ in $C_{10m_i^{-1/2}}$ and thus
\[
\vert \Phi_i \vert \geq \vert \Phi^0_i \vert - \Vert \phi_i \Vert_{C^0} \geq \frac{m_i}{2} - c m_i^{1/4}, \ \ \text{in any compact  $K \subset C_{10m_i^{-1/2}}$}
\] 
and so, for $m_i \gg 1$, does not vanish in $\partial \overline{B_{10m_i^{-1/2}}(x_j)}$ for any $j \in \lbrace 1 , \ldots , k \rbrace$. On the fixed compact complement of $C$, the same argument applies with $K$ equal to that complement minus the indicated balls. Putting this together with the decay estimate in A.b on $C$, we conclude that any zero of $\Phi_i$ must lie inside one of the balls of radius $10m_i^{-1/2}$ around the points $x_j$. Furthermore, this estimate shows that the $1$-parameter family of maps 
\[
\Phi^t_i = \Phi^0_i + t \phi_i: C_{10m_i^{-1/2}} \rightarrow \mathfrak{su}(2) \backslash \lbrace 0 \rbrace,
\] 
gives a homotopy between $\Phi_i^0$ and $\Phi_i$. Combining this with the discussion in A.a we conclude that
\[
\deg( \Phi_i \vert_{\partial B_{10 m_i^{-1/2}} (x_j)} ) = \deg( \Phi^0_i \vert_{\partial B_{10 m_i^{-1/2}}(x_j)} ) =1.
\]
Thus, $\Phi_i$ has a zero inside $B_{10 m_i^{-1/2}}(x_j)$. 
%Moreover, by combining the fact that $\Phi^0_i \geq m_i/2$ outside $\cup_{i=1}^k B_{10 m_i^{-1/2}}(x_j)$ with the estimate \eqref{eq:C^0_Estimate} we conclude that, for large enough $i$, $\Phi_i$ has no zeroes in $X \backslash \cup_{i=1}^k B_{10 m_i^{-1/2}}(x_j)$.

%\input{appendix}

%===============================================================================
\bibliography{references}
%===============================================================================

%\input{glossario}

\end{document}